\newtheorem*{theorem*}{Theorem A}
\newtheorem*{theorem**}{Theorem B}
\newtheorem{theorem}{Theorem}[section]
\newtheorem{corollary}[theorem]{Corollary}
\newtheorem*{corollary*}{Corollary}
\newtheorem{lemma}[theorem]{Lemma}
\newtheorem*{lemma*}{Lemma}
\newtheorem{proposition}[theorem]{Proposition}
\newtheorem*{proposition*}{Proposition}
\theoremstyle{remark}
\newtheorem{remark}[theorem]{Remark}
\newtheorem*{remark*}{Remark}
\newtheorem*{example*}{Example}
\newtheorem*{observation*}{Observation}
\theoremstyle{definition}
\newtheorem*{definition*}{Definition}
\newtheorem*{conjecture*}{Conjecture}
\numberwithin{equation}{section}
\begin{document}
\title[Quillen equivalence for chain homotopy categories]{Quillen equivalence for chain homotopy categories induced by balanced pairs}
\author[Jiangsheng Hu, Wei Ren, Xiaoyan Yang, Hanyang You] {Jiangsheng Hu, Wei Ren, Xiaoyan Yang, Hanyang You$^{\dag}$}

\thanks{}
\subjclass[2020]{18G25, 18N40, 18E10, 18G35, 18G80}

\keywords{Balanced pair; model category; exact category; chain homotopy category}%

\thanks{${}^{\dag}$Corresponding author: youhanyang@hznu.edu.cn}

\maketitle

\dedicatory{}%
\commby{}%
\begin{abstract}
For a balanced pair $(\mathcal{X},\mathcal{Y})$ in an abelian category, we investigate when the chain homotopy categories ${\bf K}(\mathcal{X})$ and ${\bf K}(\mathcal{Y})$ are triangulated equivalent. To this end, we realize these chain homotopy categories as homotopy
categories of certain model categories and give
conditions that ensure the existence of a Quillen equivalence between the model categories in question. We further give applications to cotorsion triples, Gorenstein projective and Gorenstein injective modules, as well as pure projective and pure injective objects.
\end{abstract}

\section{Introduction}\label{introduction}
\noindent Let ${\bf K}(\mathcal{P})$ and ${\bf K}(\mathcal{I})$ denote the chain homotopy categories of complexes of projective modules and injective modules, respectively. For a commutative Noetherian ring $R$ with a dualizing complex $D$, Iyengar and Krause established a triangle-equivalence
$D\otimes-: {\bf K}(\mathcal{P})\rightarrow {\bf K}(\mathcal{I})$; by considering the classes of compact objects in ${\bf K}(\mathcal{P})$ and ${\bf K}(\mathcal{I})$, the equivalence may be viewed as an extension of Grothendieck's duality theorem for the bounded derived category ${\bf D}^f(R)$; cf. \cite[Theorem 4.2]{IK06}. Subsequently, Chen proved that for any ring $R$ of finite Gorenstein global dimension, there is a triangle-equivalence ${\bf K}(\mathcal{GP})\simeq{\bf K}(\mathcal{GI})$ between the chain homotopy categories of complexes of Gorenstein projective and Gorenstein injective modules, which restricts to ${\bf K}(\mathcal{P})\simeq{\bf K}(\mathcal{I})$; cf. \cite[Theorem B]{Chen10}. In the case of commutative Gorenstein rings,  this recovers the Iyengar-Krause equivalence up to natural isomorphism; cf. \cite[Proposition 6.2]{Chen10}. More recently, Wang and Estrada reobtained Chen's equivalence by weakening the hypothesis to finite Gorenstein weak global dimension; cf. \cite[Theorem 1.2]{WE24}.

In fact, Chen \cite{Chen10} obtained the above triangle-equivalence in a more general setting. Specifically, any admissible balanced pair $(\mathcal{X}, \mathcal{Y})$ of finite dimension induces a triangle-equivalence ${\bf K}(\mathcal{X})\simeq{\bf K}(\mathcal{Y})$ of chain homotopy categories; cf. \cite[Theorem A]{Chen10}. Recall that a pair of additive subcategories $(\mathcal{X}, \mathcal{Y})$ in an abelian category $\mathcal{A}$ is balanced if any object in $\mathcal{A}$ admits an $\mathcal{X}$-resolution which is acyclic after applying ${\rm Hom}_\mathcal{A}(-,Y)$ for all $Y\in \mathcal{Y}$, and a $\mathcal{Y}$-coresolution which is acyclic after applying ${\rm Hom}_\mathcal{A}(X,-)$ for all $X\in \mathcal{X}$. This leads to the balanced phenomenon that the relative right-derived functors of ${\rm Hom}_\mathcal{A}(-,-)$ can be computed via an $\mathcal{X}$-resolution of the first variable, or equally via a $\mathcal{Y}$-coresolution of the second variable; that is, the functor ${\rm Hom}$ is right balanced by $\mathcal{X}\times\mathcal{Y}$; cf. \cite[\S 8.2]{EJ00}. The balanced pair $(\mathcal{X}, \mathcal{Y})$ is admissible precisely when every right $\mathcal{X}$-approximation
is epic, or equivalently, every left $\mathcal{Y}$-approximation is monic; cf. \cite[Corollary 2.3]{Chen10}. 
For instance, $(\mathcal{P},\mathcal{I})$ is trivially an admissible balanced pair, while $(\mathcal{GP},\mathcal{GI})$ is an admissible balanced pair when the base ring is virtually Gorenstein (e.g., Gorenstein ring); cf. \cite{Bel00, DLW23, EPZ20, SS20}.

It is noteworthy that there exist admissible balanced pairs of infinite dimension that induce a triangle-equivalence of the corresponding chain homotopy categories. For example, let $R$ be a product of $\aleph_{\omega}$ copies of a field. Since $R$ is von Neumann regular, every short exact sequence of left $R$-modules is pure (see \cite[Exercise 29, p.162]{Lam99}). Consequently, pure projective left $R$-modules are projective, and $R$ has infinite pure global dimension (see \cite{Os70}). By Corollary \ref{cor:KPP=KPI}, the pair $(\mathcal{PP},\mathcal{PI})$ of pure projective and pure injective left $R$-modules forms an admissible balanced pair of infinite dimension, yielding a triangulated equivalence between ${\bf K}(\mathcal{PP})$ and ${\bf K}(\mathcal{PI})$, as required. 

This motivates the search for broader sufficient conditions under which a balanced pair $(\mathcal{X},\mathcal{Y})$ yields a triangle-equivalence ${\bf K}(\mathcal{X})\simeq{\bf K}(\mathcal{Y})$. The equivalence constructed by Chen in \cite[Theorem A]{Chen10} proceeds via a comparison of the relative derived categories ${\bf D}_\mathcal{X}(\mathcal{A})$ and ${\bf D}_\mathcal{Y}(\mathcal{A})$. In contrast, we establish the desired triangle-equivalence through Quillen equivalences of the associated model categories.

Following Quillen \cite{Qui67}, a model category is a category with three distinguished classes of morphisms, called cofibrations, fibrations and weak equivalences, which satisfy a few axioms that are reminiscent of certain properties of continuous maps on topological spaces. The associated homotopy category is obtained by formally inverting the weak equivalences. For instance, ${\bf K}(\mathcal{P})$ (resp. ${\bf K}(\mathcal{I})$) is equivalent to the homotopy category of the contraderived (resp. coderived) model structure introduced by Positselski \cite{Pot11}; see also \cite{Bec14, Gil08}. Under mild conditions, 
${\bf K}(\mathcal{GP})$ and ${\bf K}(\mathcal{GI})$ admit similar model-categorical realizations; see details in \cite{Gil16}.

Now let $(\mathcal{X}, \mathcal{Y})$ be an admissible balanced pair in an abelian category $\mathcal{A}$. We denote by $\mathcal{E}$ the class of short exact sequences in $\mathcal{A}$ which remain exact under ${\rm Hom}_{\mathcal{A}}(X, -)$ for all $X\in \mathcal{X}$; then $(\mathcal{A}, \mathcal{E})$ is an exact category. Consequently, ${\rm Ch}(\mathcal{A}, \mathcal{E})$, the category of complexes with short exact sequences lying degree-wise in $\mathcal{E}$, is also exact (see \cite[Lemma 9.1]{Buh10}). Let $\mathcal{E}\text{-}{\rm dw}\widetilde{\mathcal{X}}$ be the class of complexes $X$ with each $X_n\in \mathcal{X}$, where the prefix ``$\mathcal{E}$'' indicates that orthogonality is taken with respect to ${\rm Ext}^1_{{\rm Ch}(\mathcal{E})}(-, -)$. The class $\mathcal{E}\text{-}{\rm dw}\widetilde{\mathcal{Y}}$ and its left orthogonal $^{\perp}(\mathcal{E}\text{-}{\rm dw}\widetilde{\mathcal{Y}})$ are defined analogously.

First, we realize the chain homotopy categories ${\bf K}(\mathcal{X})$ and ${\bf K}(\mathcal{Y})$ as homotopy categories of suitable model categories. Following the correspondence between model structures and Hovey triple (cf. \cite{BR07, Gil11, Hov02}), we denote a model structure $\mathcal{M}$ by its corresponding Hovey triple, and write ${\rm Ho}(\mathcal{M})$ for its homotopy category. If $(\mathcal{E}\text{-}{\rm dw}\widetilde{\mathcal{X}})^{\perp}$ is closed under direct sums, we show that 
$\mathcal{M}_{dw\mathcal{X}} = (\mathcal{E}\text{-}{\rm dw}\widetilde{\mathcal{X}}, (\mathcal{E}\text{-}{\rm dw}\widetilde{\mathcal{X}})^{\perp}, {\rm Ch}(\mathcal{A}, \mathcal{E}))$ defines a model structure on ${\rm Ch}(\mathcal{A}, \mathcal{E})$, with a triangle-equivalence ${\rm Ho}(\mathcal{M}_{dw\mathcal{X}}) \simeq {\bf K}(\mathcal{X})$ (see Theorem \ref{thm:M5}(1)). Dually, if ${}^{\perp}(\mathcal{E}\text{-}{\rm dw}\widetilde{\mathcal{Y}})$ is closed under direct products, then $\mathcal{M}_{dw\mathcal{Y}} = ({\rm Ch}(\mathcal{A}, \mathcal{E}), {^{\perp}(\mathcal{E}\text{-}{\rm dw}\widetilde{\mathcal{Y}})}, \mathcal{E}\text{-}{\rm dw}\widetilde{\mathcal{Y}})$ is a model structure on ${\rm Ch}(\mathcal{A}, \mathcal{E})$, with a triangle-equivalence
${\rm Ho}(\mathcal{M}_{dw\mathcal{Y}}) \simeq {\bf K}(\mathcal{Y})$ (see Theorem \ref{thm:M5}(2)). 

Consequently, the Quillen equivalence of model categories yields the following induced triangle-equivalences, which partially address our original question. Since $\mathcal{X}$ (resp. $\mathcal{Y}$) consists of projective (resp. injective) objects in $(\mathcal{A}, \mathcal{E})$, the model structures $\mathcal{M}_{dw\mathcal{X}}$ and $\mathcal{M}_{dw\mathcal{Y}}$ can be regarded as relative analogues of Positselski's contraderived and coderived model structures, respectively.
\vspace{0.05in}

\noindent{\bf Theorem} (see Theorem \ref{thm:KX=KY}).
Assume that $(\mathcal{E}\text{-}{\rm dw}\widetilde{\mathcal{X}})^{\perp}= {}^{\perp}(\mathcal{E}\text{-}{\rm dw}\widetilde{\mathcal{Y}})$. Then there is a triangle-equivalence ${\bf K}(\mathcal{X}) \simeq {\bf K}(\mathcal{Y})$.
\vspace{0.05in}

The theorem applies in several natural settings, showing that the condition $(\mathcal{E}\text{-}{\rm dw}\widetilde{\mathcal{X}})^{\perp}= {}^{\perp}(\mathcal{E}\text{-}{\rm dw}\widetilde{\mathcal{Y}})$ is not too restrictive. For a ring $R$, let $\mathcal{GP}$ and $\mathcal{GI}$ denote the full subcategories of Gorenstein projective and Gorenstein injective $R$-modules, respectively. If $R$ is left virtually Gorenstein and the equalities $\mathcal{E}\text{-}{\rm dg}\widetilde{\mathcal{GP}} = \mathcal{E}\text{-}{\rm dw}\widetilde{\mathcal{GP}}$ and $\mathcal{E}\text{-}{\rm dg}\widetilde{\mathcal{GI}} = \mathcal{E}\text{-}{\rm dw}\widetilde{\mathcal{GI}}$ hold (precise definitions appear in Section 5.2), then $(\mathcal{E}\text{-}{\rm dw}\widetilde{\mathcal{GP}})^{\perp}= {}^{\perp}(\mathcal{E}\text{-}{\rm dw}\widetilde{\mathcal{GI}})$. Consequently, we obtain a triangle-equivalence ${\bf K}(\mathcal{GP}) \simeq {\bf K}(\mathcal{GI})$ that restricts to ${\bf K}(\mathcal{P}) \simeq {\bf K}(\mathcal{I})$ (Corollary \ref{cor:Rmodcase}). In particular, any ring of finite Gorenstein weak global dimension is left virtually Gorenstein and satisfies the two equalities (see \cite[Theorem A]{DLW23} and Lemma \ref{lem:G-Tri2}), thereby recovering and extending the equivalences of \cite[Theorem B]{Chen10} and \cite[Theorem 1.2]{WE24} via a different approach (Corollaries \ref{cor:KGP=KGI'} and \ref{cor:KGP=KGI}).

In addition, let $\mathcal{A}$ be a locally finitely presented Grothendieck category, and let $\mathcal{PP}$ (resp. $\mathcal{PI}$) denote the class of pure projective (resp. pure injective) objects. Then $(\mathcal{PP}, \mathcal{PI})$ is an admissible balanced pair in $\mathcal{A}$, and by \cite[Theorem 5.4]{S14} we have $(\mathcal{E}\text{-}{\rm dw}\widetilde{\mathcal{PP}})^{\perp} = {^\perp}(\mathcal{E}\text{-}{\rm dw}\widetilde{\mathcal{PI}})$ on ${\rm Ch}(\mathcal{A}, \mathcal{E})$. The theorem thus yields a triangle-equivalence ${\bf K}(\mathcal{PP}) \simeq {\bf K}(\mathcal{PI})$ (Corollary \ref{cor:KPP=KPI}), which was previously obtained by \v{S}\v{t}ov\'{\i}\v{c}ek in \cite[Corollaries 5.7 and 5.8]{S14}. This also refines an observation of Chen \cite{Chen10}, where the same equivalence required the hypothesis that the ring has finite pure global dimension.
\vspace{0.1in}

\noindent{\bf Notation.} All subcategories are always considered to be full additive and closed under isomorphisms and direct
summands. Complexes are written using the homological notation, meaning that the degree decreases in
 the direction of differential maps.

\section{Preliminaries}\label{prelimi}

An {\em exact category} is a pair $(\mathcal{A}, \mathcal{E})$ where $\mathcal{A}$ is an additive category and $\mathcal{E}$
is a class of kernel-cokernel pairs $(i, p): A'\stackrel{i}\rightarrowtail A\stackrel{p}\twoheadrightarrow A''$ in $\mathcal{A}$, called short exact sequences, satisfying the standard axioms. The morphism $i$ is an {\em admissible monomorphism}, and $p$ an {\em admissible epimorphism}. The exact category $(\mathcal{A}, \mathcal{E})$ is {\em weakly idempotent complete} if every split monomorphism admits a cokernel and every split epimorphism admits a kernel; cf. \cite{Buh10, Gil11} for further details.
\vspace{0.1in}

\noindent{\bf 2.1. Cotorsion pairs in exact categories.}
Let ${\rm Ext}^1_{\mathcal{E}}(-, -)$ be the Yoneda Ext in the exact category $(\mathcal{A}, \mathcal{E})$. A pair of classes $(\mathcal{F}, \mathcal{C})$ in $(\mathcal{A}, \mathcal{E})$ is a \emph{cotorsion pair} provided that $\mathcal{F} =  {^\perp}\mathcal{C}$ and $\mathcal{C} = \mathcal{F}^{\perp}$, where $^{\perp}\mathcal{C}$ consists of objects $F$ such that $\mathrm{Ext}^1_{\mathcal{E}}(F, C) = 0$ for all $C\in \mathcal{C}$, and $\mathcal{F}^{\perp}$ consists of objects $C$ such that $\mathrm{Ext}^1_{\mathcal{E}}(F, C) = 0$ for all $F\in \mathcal{F}$. We say the cotorsion pair $(\mathcal{F}, \mathcal{C})$ is {\em hereditary} if $\mathcal{F}$ is closed under taking kernels of admissible epimorphisms between objects of $\mathcal{F}$, and if $\mathcal{C}$ is closed under taking cokernels of admissible monomorphisms between objects of $\mathcal{C}$.

Let $\mathcal{X}$ be a subcategory of $\mathcal{A}$. A morphism $f: X\rightarrow M$ with $X\in \mathcal{X}$ is called a {\em right $\mathcal{X}$-approximation} (or \emph{$\mathcal{X}$-precover}) of $M$, if any morphism from an object in $\mathcal{X}$ to $M$ factors through $f$. The subcategory $\mathcal{X}$ is called {\em contravariantly finite} (or \emph{precovering}) if each object in $\mathcal{A}$ has a right $\mathcal{X}$-approximation. Dually, for a class of objects $\mathcal{Y}$, one has the notion of a {\em left $\mathcal{Y}$-approximation} (or \emph{$\mathcal{Y}$-preenvelope}), and then the notion of a {\em covariantly finite} (or \emph{preenveloping}) subcategory.

The cotorsion pair $(\mathcal{F}, \mathcal{C})$ is said to be {\em complete} if for any object $M\in \mathcal{A}$, there exist short exact sequences $C\rightarrowtail F \twoheadrightarrow M$ and $M\rightarrowtail C' \twoheadrightarrow F'$ with $F, F'\in \mathcal{F}$ and $C, C'\in \mathcal{C}$. In this case, $F \twoheadrightarrow M$ is called a \emph{special right $\mathcal{F}$-approximation} (or \emph{special $\mathcal{F}$-precover}) of $M$, and $M\rightarrowtail C'$ is called  a \emph{special left $\mathcal{C}$-approximation} (or \emph{special $\mathcal{C}$-preenvelope}) of $M$. In this case,
$\mathcal{F}$ is contravariantly finite, and $\mathcal{C}$ is covariantly finite.
\vspace{0.1in}

\noindent{\bf 2.2. Balanced pairs.}
Throughout this subsection, let $\mathcal{A}$ be an abelian category and $\mathcal{X}$ a contravariantly finite subcategory of $\mathcal{A}$. For any object $M\in \mathcal{A}$, an {\em $\mathcal{X}$-resolution} of $M$ is a complex $\cdots \rightarrow X_{2}\stackrel{d_{2}}\rightarrow X_1\stackrel{d_{1}}\rightarrow X_0\stackrel{d_{0}}\rightarrow M\rightarrow 0$ with each $X_i\in \mathcal{X}$ such that it remains acyclic after applying ${\rm Hom}_\mathcal{A}(X,-)$ for all $X\in \mathcal{X}$; cf. \cite[Definition 8.1.2]{EJ00}. We sometimes denote the $\mathcal{X}$-resolution by $X^{\bullet}\rightarrow M$, where $X^{\bullet}=\cdots \rightarrow X_{2}\stackrel{d_{2}}\rightarrow X_1\stackrel{d_{1}}\rightarrow X_0\rightarrow 0$ is \emph{the deleted $\mathcal{X}$-resolution} of $M$.
The {\em $\mathcal{X}$-resolution dimension} of an object $M$, denoted by $\mathcal{X}\text{-}{\rm dim}M$, is defined to be the minimal integer $n\geq 0$ such that there is an $\mathcal{X}$-resolution $0\rightarrow X_n\rightarrow \cdots \rightarrow X_1\rightarrow X_0\rightarrow M\rightarrow 0$. For any subcategory $\mathcal{A}'$ of $\mathcal{A}$, the {\em global $\mathcal{X}$-resolution dimension} of $\mathcal{A}'$ is defined to be the supremum of the $\mathcal{X}$-resolution dimension of all the objects in $\mathcal{A}'$, and is denoted by  $\mathcal{X}\text{-}{\rm dim}\mathcal{A}'$.
Dually, for any covariantly finite subcategory $\mathcal{Y}$, we have notions of {\em $\mathcal{Y}$-coresolution}, {\em $\mathcal{Y}$-coresolution dimension}
$\mathcal{Y}\text{-}{\rm codim}M$ of any object $M\in \mathcal{A}$, and the {\em global $\mathcal{Y}$-coresolution dimension} $\mathcal{Y}\text{-}{\rm codim}\mathcal{A}'$ of any subcategory $\mathcal{A}'$ of $\mathcal{A}$.

Recall that a complex $C$ is {\em right $\mathcal{X}$-acyclic} if it becomes acyclic by applying ${\rm Hom}_{\mathcal{A}}(X, -)$ for any $X\in \mathcal{X}$, and dually, a complex $C$ is {\em left $\mathcal{Y}$-acyclic} if it becomes acyclic by applying ${\rm Hom}_{\mathcal{A}}(-,Y)$ for any $Y\in \mathcal{Y}$; cf. \cite[p. 2721]{Chen10}.
It follows from \cite[Definition 1.1]{Chen10} that a pair $(\mathcal{X}, \mathcal{Y})$ of subcategories in $\mathcal{A}$ is a {\em balanced pair} if $\mathcal{X}$ is contravariantly finite and $\mathcal{Y}$ is covariantly finite; for each object $M\in \mathcal{A}$, there is a left $\mathcal{Y}$-acyclic $\mathcal{X}$-resolution $X^{\bullet}\rightarrow M$, and a right  $\mathcal{X}$-acyclic $\mathcal{Y}$-coresolution $M\rightarrow Y^{\bullet}$.

The balanced pair $(\mathcal{X}, \mathcal{Y})$ is {\em admissible} if each right $\mathcal{X}$-approximation is an epimorphism and each left $\mathcal{Y}$-approximation is a monomorphism.
By \cite[Proposition 2.6]{Chen10}, if there exist two complete and hereditary cotorsion pairs $(\mathcal{X}, \mathcal{Z})$ and $(\mathcal{Z}, \mathcal{Y})$ in $\mathcal{A}$, then $(\mathcal{X}, \mathcal{Y})$ is an admissible balanced pair. In this case, $(\mathcal{X}, \mathcal{Z}, \mathcal{Y})$ is called a {\em complete and hereditary cotorsion triple}. It follows from \cite[Theorem 4.4]{EPZ20} that the existence of complete and hereditary cotorsion triple in $\mathcal{A}$ is equivalent to that $\mathcal{A}$ has enough projective objects and enough injective objects.
\vspace{0.1in}

\noindent{\bf 2.3. Hovey triples and model structures on exact categories.}
We refer to \cite{Qui67,Hov99} for details on model structures and homotopy categories. Let $(\mathcal{A}, \mathcal{E})$ be an exact category equipped with a model structure. An object $M\in \mathcal{A}$ is {\em trivial} if $0\rightarrowtail M$ (equivalently, $M\twoheadrightarrow 0$) is a weak equivalence; it is {\em cofibrant} if $0\rightarrowtail M$ is a cofibration, and it is {\em fibrant} if $M\twoheadrightarrow 0$ is a fibration. The subcategories of trivial, cofibrant and fibrant objects are denoted by $\mathcal{A}_{tri}$, $\mathcal{A}_{c}$ and $\mathcal{A}_{f}$, respectively. Recall that a subcategory $\mathcal{W}$ is {\em thick} if it is closed under direct summands and satisfies the two-out-of-three property for short exact sequences. A triple $(\mathcal{C}, \mathcal{W}, \mathcal{F})$ of subcategories in $(\mathcal{A}, \mathcal{E})$ is a {\em Hovey triple}, if $\mathcal{W}$ is thick and both $(\mathcal{C}, \mathcal{W}\cap \mathcal{F})$ and $(\mathcal{C}\cap \mathcal{W}, \mathcal{F})$ are complete cotorsion pairs. The Hovey triple is {\em hereditary} when these cotorsion pairs are hereditary.

By \cite[Theorem 2.2]{Hov02}, there is a one-to-one correspondence between Hovey triple and model structure for abelian categories; a version for exact categories is from \cite[Theorem 3.3]{Gil11}: a weakly idempotent complete exact category $(\mathcal{A}, \mathcal{E})$ admits a model structure if and only if there is a Hovey triple $(\mathcal{A}_{c}, \mathcal{A}_{tri}, \mathcal{A}_{f})$. In this case, a map is a (trivial) cofibration if and only if it is an admissible monomorphism with a (trivially) cofibrant cokernel, and a map is a (trivial) fibration if and only if it is an admissible epimorphism with a (trivially) fibrant kernel. A map is weak equivalence if and only if it factors as a trivial cofibration followed by a trivial fibration. Throughout this paper, we identify a model structure with its associated Hovey triple.

For a model category $\mathcal{A}$ with model structure $\mathcal{M} = (\mathcal{A}_{c}, \mathcal{A}_{tri}, \mathcal{A}_{f})$, the homotopy category $\mathrm{Ho}(\mathcal{M})$ is the localization of $\mathcal{A}$ with respect to weak equivalences. If $\mathcal{M}$ is hereditary, then the subcategory $\mathcal{A}_{cf} = \mathcal{A}_{c}\cap\mathcal{A}_{f}$ of cofibrant-fibrant objects is a Frobenius category, with $\omega = \mathcal{A}_{c}\cap\mathcal{A}_{tri}\cap\mathcal{A}_{f}$ being the class of projective-injective objects. The stable category $\underline{\mathcal{A}_{cf}} = \mathcal{A}_{cf}/\omega$ is a triangulated category, and moreover, there is a triangle-equivalence $\mathrm{Ho}(\mathcal{M})\simeq \underline{\mathcal{A}_{cf}}$ (see \cite[Theorem 1.2.10]{Hov99}, \cite[Proposition 4.4]{Gil11} or \cite[Proposition 1.1.13]{Bec14}).
\vspace{0.1in}

\noindent{\bf 2.4. Complexes.}
Let $\mathcal{A}$ be an abelian category. A complex $C$ is a sequence $\cdots\rightarrow C_{n+1}\stackrel{d_{n+1}}\rightarrow C_n\stackrel{d_n}\rightarrow C_{n-1}\rightarrow \cdots $ of objects in $\mathcal{A}$ with $d_nd_{n+1} = 0$ for all $n$. An object $M\in A$ is identify with the complex concentrated in degree zero. A chain map $f: C\rightarrow D$ of complexes consists of morphisms $f_n: C_n\rightarrow D_n$ satisfying $f_{n-1}d^C_n = d^D_nf_n$. The category of complexes ${\rm Ch}(\mathcal{A})$ is also an abelian category. For an exact category $(\mathcal{A}, \mathcal{E})$, the category of complexes ${\rm Ch}(\mathcal{A}, \mathcal{E})$ is exact with respect to the class ${\rm Ch}(\mathcal{E})$ of degreewise $\mathcal{E}$-sequences.

A chain map $f: C\rightarrow D$ is {\em null homotopic}, denoted by $f\sim 0$, if there exist maps $s_n: C_n\rightarrow D_{n+1}$ such that $f_n = d^D_{n+1}s_n + s_{n-1}d^C_n$. Two chain maps $f, g: C\rightarrow D$ are {\em chain homotopic} (denoted by $f\sim g$) if $f-g \sim 0$, i.e., $f-g = ds + sd$. The collection $\{s_n\}$ is a {\em chain homotopy} from $f$ to $g$.

Given any object $A\in \mathcal{A}$, let ${\rm S}^nA$ denote the complex with $A$ in degree $n$ and all other entries 0, and let ${\rm D}^nA$ denote the complex with $A$ in degrees $n$ and $n-1$ and all other entries 0, with all maps 0 except $d_n = 1_A$. We refer to \cite[Lemma 3.1]{Gil04} and \cite[Lemma 4.2]{Gil11} for some useful isomorphisms with respect to complexes of the forms ${\rm S}^nA$ and ${\rm D}^nA$.

Given two complexes $C$ and $D$, the {\em Hom-complex} ${\rm Hom}_{\mathcal{A}}(C, D)$ is defined with $n$th component
${\rm Hom}_{\mathcal{A}}(C, D)_{n} = \prod_{k\in \mathbb{Z}}{\rm Hom}_{\mathcal{A}}(X_{k}, Y_{k+n})$ and differential $(\delta_{n}f)_{k} = d_{k+n}^{D}f_{k} - (-1)^{n}f_{k-1}d_{k}^{C}$ for morphisms $f_{k}: C_{k}\rightarrow D_{k+n}$.

Recall that ${\rm Ext}_{{\rm Ch}(\mathcal{A})}^1(C, D)$ is the group of equivalence classes of short exact sequences $0\rightarrow D\rightarrow E\rightarrow C\rightarrow 0$ of complexes. Within this group, ${\rm Ext}_{dw}^1(C, D)$ consists of the degree-wise split sequences, while ${\rm Ext}_{{\rm Ch}(\mathcal{E})}^1(C, D)$ comprises those sequences lying in $\mathcal{E}$ in each degree.

The {\em suspension functor} $\Sigma$ on the category of complexes is defined by $(\Sigma C)_n = C_{n-1}$ and $d_n^{\Sigma C} = - d_{n-1}^C$; $\Sigma^n$ denotes its $n$th power. For any complex $C$, the {\em $n$th homology} ${\rm H}_nC$ is defined to be ${\rm Z}_nC/{\rm B}_nC$, where ${\rm Z}_nC = {\rm Ker}d_n$ is the {\em $n$th cycle}, and ${\rm B}_nC = {\rm Im}d_{n+1}$ is the {\em $n$th boundary}. The following isomorphisms are standard (see, e.g. \cite[Lemma 2.1]{Gil04}).

\begin{lemma}\label{lem:Gil04}
For chain complexes $C$ and $D$, one has
$${\rm Ext}_{dw}^1(C, \Sigma^{-n-1}D) \cong {\rm H}_n{\rm Hom}_\mathcal{A}(C, D) = {\rm Hom}_{{\rm Ch}(\mathcal{A})}(C, \Sigma^{-n}D)/\sim.$$
\end{lemma}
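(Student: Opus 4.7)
The plan is to verify the stated equality by unwinding definitions, then construct an explicit bijection with $\mathrm{Ext}^1_{dw}$ via a mapping-cone construction.

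First, I would verify the easier equality $\mathrm{H}_n\mathrm{Hom}_\mathcal{A}(C,D)=\mathrm{Hom}_{\mathrm{Ch}(\mathcal{A})}(C,\Sigma^{-n}D)/\!\sim$ by direct definition chase. With the conventions in the excerpt, a degree-$n$ cycle of the Hom-complex is a family $\{\alpha_k\colon C_k\to D_{k+n}\}$ satisfying $d^D_{k+n}\alpha_k=(-1)^n\alpha_{k-1}d^C_k$; under $(\Sigma^{-n}D)_k=D_{k+n}$ with differential $(-1)^n d^D_{k+n}$, this is exactly the chain-map condition for $\alpha\colon C\to\Sigma^{-n}D$. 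A degree-$n$ boundary $\delta_{n+1}(g)$ with $g_k\colon C_k\to D_{k+n+1}$ then unpacks into the standard formula for a chain homotopy, so passing to $\mathrm{H}_n$ gives precisely the quotient by chain homotopy.

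Second, I would construct the isomorphism $\mathrm{Hom}_{\mathrm{Ch}(\mathcal{A})}(C,\Sigma^{-n}D)/\!\sim\ \xrightarrow{\cong}\mathrm{Ext}^1_{dw}(C,\Sigma^{-n-1}D)$ via a (shifted) mapping cone. Given a chain map $\alpha\colon C\to\Sigma^{-n}D$, put $(E_\alpha)_k=(\Sigma^{-n-1}D)_k\oplus C_k=D_{k+n+1}\oplus C_k$ with differential
\[
d^{E_\alpha}_k=\begin{pmatrix} d^{\Sigma^{-n-1}D}_k & \alpha_{k-1} \\ 0 & d^C_k\end{pmatrix}.
\]
The identity $(d^{E_\alpha})^2=0$ is exactly the chain-map condition on $\alpha$ (once the shift from $\Sigma^{-n}D$ to $\Sigma^{-n-1}D$ is absorbed into the sign). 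The obvious inclusion and projection yield a short exact sequence $0\to\Sigma^{-n-1}D\rightarrowtail E_\alpha\twoheadrightarrow C\to 0$ which splits in each degree, so its class lies in $\mathrm{Ext}^1_{dw}$.

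Third, I would check that this assignment induces the claimed bijection. For well-definedness, a chain homotopy $s=\{s_k\colon C_k\to D_{k+n+1}\}$ between $\alpha$ and $\beta$ yields an isomorphism of extensions $E_\alpha\cong E_\beta$ given degreewise by $\bigl(\begin{smallmatrix}1 & s_k\\ 0 & 1\end{smallmatrix}\bigr)$; the fact that this square commutes with the differentials is precisely the homotopy equation. For surjectivity, given any degreewise-split extension $0\to\Sigma^{-n-1}D\to E\to C\to 0$, one chooses splittings of the epimorphism in each degree, writes $E_k=D_{k+n+1}\oplus C_k$ accordingly, and reads off the off-diagonal component of $d^E_k$ as the desired family $\alpha_{k-1}\colon C_{k-1}\to D_{k+n}$; the condition $(d^E)^2=0$ forces $\alpha$ to be a chain map into $\Sigma^{-n}D$. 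For injectivity, any isomorphism of extensions inducing the identity on $\Sigma^{-n-1}D$ and $C$ has lower-unitriangular block form $\bigl(\begin{smallmatrix}1 & s\\ 0 & 1\end{smallmatrix}\bigr)$, and compatibility with the differentials of $E_\alpha$ and $E_\beta$ says exactly that $s$ is a chain homotopy from $\alpha$ to $\beta$. The main obstacle is purely bookkeeping: the sign conventions in the Hom-complex differential, the suspension, and the mapping-cone differential must all be reconciled so that the chain-map equation for $\alpha\colon C\to\Sigma^{-n}D$ translates cleanly into $(d^{E_\alpha})^2=0$ on an object indexed by the shift $\Sigma^{-n-1}D$; once this is set up, the rest is formal.
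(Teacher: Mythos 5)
Your argument is correct and is essentially the standard one: the paper gives no proof of this lemma, citing it as standard from Gillespie's \emph{The flat model structure on} ${\rm Ch}(R)$, and the proof there is exactly your identification of degreewise split extensions of $C$ by $\Sigma^{-n-1}D$ with chain maps $C\to\Sigma^{-n}D$ modulo homotopy, i.e.\ with ${\rm H}_n{\rm Hom}_{\mathcal{A}}(C,D)$. The only blemish is an indexing slip in your cone differential: the off-diagonal component of $d^{E_\alpha}_k$ is a map $C_k\to D_{k+n}$ and should be $\pm\alpha_k$ rather than $\alpha_{k-1}$ (and similarly in the surjectivity step); this is exactly the bookkeeping you flag and does not affect the argument.
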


\section{Models for relative derived categories}\label{relative-derived-categories}\label{Models-chain-homotopy}

\noindent Throughout the paper, let $\mathcal{A}$ be a complete and cocomplete abelian category, and let $(\mathcal{X}, \mathcal{Y})$ be an admissible balanced pair in $\mathcal{A}$. We emphasize that both $\mathcal{X}$ and $\mathcal{Y}$ are closed under taking direct summands. We begin with the following proposition.

\begin{proposition}\label{lem:XY-ac}
Let $A$ be a complex in ${\rm Ch}(\mathcal{A})$.
\begin{enumerate}
\item
$A$ is right $\mathcal{X}$-acyclic if and only if it is left $\mathcal{Y}$-acyclic;
\item
If $A$ is right $\mathcal{X}$-acyclic (equivalently, left $\mathcal{Y}$-acyclic), then it is acyclic;
\item
The class of right $\mathcal{X}$-acyclic complexes is closed under direct sums and direct products.
\end{enumerate}

\begin{proof}
(1) The result is a direct consequence of \cite[Proposition 2.2]{Chen10}.

(2) Let $\alpha_n$ be the canonical morphism $A_n\rightarrow {\rm Z}_{n-1}A$. Since $A$ is right $\mathcal{X}$-acyclic, the map ${\rm Hom}_{\mathcal{A}}(X, \alpha_n)$ is surjective. It suffices to prove that the $\alpha_n$ is epic. Denote by $\pi_n$ the right $\mathcal{X}$-approximation of $A_n$, one can check that $\alpha_n\pi_n$ is a right $\mathcal{X}$-approximation of ${\rm Z}_{n-1}A$, thus $\alpha_n$ is epic as $(\mathcal{X}, \mathcal{Y})$ is admissible.

(3) Let $\{A^i\}_{i\in I}$ be a family of right $\mathcal{X}$-acyclic complexes in ${\rm Ch}(\mathcal{A})$. Then they are left $\mathcal{Y}$-acyclic. For any object $Y\in \mathcal{Y}$, one has that ${\rm Hom}_{\mathcal{A}}(A^i, Y)$ is acyclic for any $i\in I$. Hence ${\rm Hom}_{\mathcal{A}}(\bigoplus_{i}A^i, Y) \cong \prod_{i}{\rm Hom}_{\mathcal{A}}(A^i, Y)$ is acyclic, which implies that $\bigoplus_{i}A^i$ is left $\mathcal{Y}$-acyclic. One can check similarly that $\prod_{i}A^i$ is right $\mathcal{X}$-acyclic.
\end{proof}

\end{proposition}

In the following, we denote by $\mathcal{E}$ the class of short exact sequences in $\mathcal{A}$ which are right $\mathcal{X}$-acyclic (equivalently, left $\mathcal{Y}$-acyclic). Then $(\mathcal{A}, \mathcal{E})$ is an exact category and ${\rm Ch}(\mathcal{A}, \mathcal{E})$ is the exact category of complexes, with respect to the short exact sequences of complexes which are right $\mathcal{X}$-acyclic in each degree. Similar to \cite[Definition 3.3]{Gil04}, let $\widetilde{\mathcal{E}}$ be the class of right $\mathcal{X}$-acyclic complexes; the class of complexes $X\in\widetilde{\mathcal{E}}$ with all ${\rm Z}_nX \in \mathcal{X}$ is denoted by $\widetilde{\mathcal{X}}_{\mathcal{E}}$, and $\mathcal{E}\text{-}{\rm dg}\widetilde{\mathcal{X}}$ denotes the class of those complexes $X$ for which $X_n\in \mathcal{X}$ and every map $X\rightarrow E$ is null homotopic whenever $E\in \widetilde{\mathcal{E}}$. Dually, $\widetilde{\mathcal{Y}}_\mathcal{E}$ and $\mathcal{E}\text{-}{\rm dg}\widetilde{\mathcal{Y}}$ are defined.

For any $M\in \mathcal{X}$, it is easy to see that ${\rm S}^nM\in\mathcal{E}\text{-}{\rm dg}\widetilde{\mathcal{X}}$, and ${\rm D}^nM\in \widetilde{\mathcal{X}}_\mathcal{E}$.
Let $0\rightarrow X'\rightarrow X''\rightarrow X\rightarrow 0$ be a short exact sequence in ${\rm Ch}(\mathcal{A}, \mathcal{E})$ with $X\in \mathcal{E}\text{-}{\rm dg}\widetilde{\mathcal{X}}$. Then $X'\in \mathcal{E}\text{-}{\rm dg}\widetilde{\mathcal{X}}$ if and only if $X''\in \mathcal{E}\text{-}{\rm dg}\widetilde{\mathcal{X}}$.

\begin{proposition}\label{prop:(dgX,E)}
$(\mathcal{E}\text{-}{\rm dg}\widetilde{\mathcal{X}}, \widetilde{\mathcal{E}})$ is a cotorsion pair in ${\rm Ch}(\mathcal{A}, \mathcal{E})$.
\end{proposition}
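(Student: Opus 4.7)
The plan is to verify the two orthogonality identities $\mathcal{E}\text{-}{\rm dg}\widetilde{\mathcal{X}}={}^{\perp}\widetilde{\mathcal{E}}$ and $\widetilde{\mathcal{E}}=(\mathcal{E}\text{-}{\rm dg}\widetilde{\mathcal{X}})^{\perp}$, both with the help of a single observation and Lemma \ref{lem:Gil04}. The observation is that whenever $C$ is a complex with $C_n\in\mathcal{X}$ for every $n$ and $D\in\widetilde{\mathcal{E}}$, any short exact sequence $0\to D\to Z\to C\to 0$ in ${\rm Ch}(\mathcal{E})$ is automatically degreewise split: applying ${\rm Hom}_{\mathcal{A}}(C_n,-)$ to the degree-$n$ sequence $0\to D_n\to Z_n\to C_n\to 0$, which lies in $\mathcal{E}$, yields a short exact sequence, so $1_{C_n}$ lifts. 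Hence ${\rm Ext}^1_{{\rm Ch}(\mathcal{E})}(C,D)={\rm Ext}^1_{dw}(C,D)\cong{\rm Hom}_{{\rm Ch}(\mathcal{A})}(C,\Sigma D)/\!\sim$ in this situation, by Lemma \ref{lem:Gil04}.

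The inclusion $\mathcal{E}\text{-}{\rm dg}\widetilde{\mathcal{X}}\subseteq{}^{\perp}\widetilde{\mathcal{E}}$ then follows at once: for $X\in\mathcal{E}\text{-}{\rm dg}\widetilde{\mathcal{X}}$ and $E\in\widetilde{\mathcal{E}}$, $\Sigma E\in\widetilde{\mathcal{E}}$, so every chain map $X\to\Sigma E$ is null homotopic and ${\rm Ext}^1_{{\rm Ch}(\mathcal{E})}(X,E)=0$. For the converse, let $X\in{}^{\perp}\widetilde{\mathcal{E}}$. The main work is to show $X_n\in\mathcal{X}$. For any $K\in\mathcal{A}$ the disk complex ${\rm D}^{n+1}K$ is contractible, hence lies in $\widetilde{\mathcal{E}}$, and the standard identity ${\rm Ext}^1_{{\rm Ch}(\mathcal{E})}(X,{\rm D}^{n+1}K)\cong{\rm Ext}^1_{\mathcal{E}}(X_n,K)$, the exact-category form of \cite[Lemma 3.1]{Gil04} cited in the excerpt, forces $X_n$ to be projective in $(\mathcal{A},\mathcal{E})$. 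Because the balanced pair is admissible, any right $\mathcal{X}$-approximation of $X_n$ is an admissible epimorphism and therefore splits; since $\mathcal{X}$ is closed under direct summands this yields $X_n\in\mathcal{X}$. The remaining null-homotopy condition then drops out by applying the observation to $\Sigma^{-1}E\in\widetilde{\mathcal{E}}$ and invoking Lemma \ref{lem:Gil04} once more.

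For the identity $\widetilde{\mathcal{E}}=(\mathcal{E}\text{-}{\rm dg}\widetilde{\mathcal{X}})^{\perp}$ only the inclusion $\supseteq$ remains. Given $E\in(\mathcal{E}\text{-}{\rm dg}\widetilde{\mathcal{X}})^{\perp}$, I test against the sphere complexes ${\rm S}^{n+1}X$ for $X\in\mathcal{X}$, which lie in $\mathcal{E}\text{-}{\rm dg}\widetilde{\mathcal{X}}$; the observation together with Lemma \ref{lem:Gil04} converts $0={\rm Ext}^1_{{\rm Ch}(\mathcal{E})}({\rm S}^{n+1}X,E)={\rm Ext}^1_{dw}({\rm S}^{0}X,\Sigma^{-n-1}E)\cong{\rm H}_n{\rm Hom}_{\mathcal{A}}(X,E)$, so ${\rm Hom}_{\mathcal{A}}(X,E)$ is acyclic for every $X\in\mathcal{X}$, i.e., $E\in\widetilde{\mathcal{E}}$. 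I expect the step $X_n\in\mathcal{X}$ to be the main obstacle: it relies on the ${\rm D}^{n+1}K$-${\rm Ext}$ identification (the exact-category extension of Gillespie's lemma) together with the description of the projectives of $(\mathcal{A},\mathcal{E})$ as the summand closure of $\mathcal{X}$, which uses admissibility in an essential way.
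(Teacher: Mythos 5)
Your proof is correct, and the overall strategy coincides with the paper's: both directions are reduced to homotopy classes of chain maps via the identification ${\rm Ext}^1_{{\rm Ch}(\mathcal{E})}(C,D)={\rm Ext}^1_{dw}(C,D)$ for $C$ degreewise in $\mathcal{X}$, and the nontrivial inclusions are obtained by testing against sphere and disk complexes. The one step you argue differently is $C\in{}^{\perp}\widetilde{\mathcal{E}}\Rightarrow C_n\in\mathcal{X}$: the paper builds an explicit extension $0\to{\rm D}^{n+1}K\to D\to C\to 0$ in ${\rm Ch}(\mathcal{A},\mathcal{E})$ by a pullback along a right $\mathcal{X}$-approximation $X_n\twoheadrightarrow C_n$ with kernel $K$, and splits it; you instead invoke the natural isomorphism ${\rm Ext}^1_{{\rm Ch}(\mathcal{E})}(C,{\rm D}^{n+1}K)\cong{\rm Ext}^1_{\mathcal{E}}(C_n,K)$ to conclude that $C_n$ is $\mathcal{E}$-projective and then use admissibility (the approximation is an admissible epimorphism, hence splits) to identify the $\mathcal{E}$-projectives with $\mathcal{X}$. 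These are two packagings of the same splitting argument — indeed the paper uses your version verbatim later, in the proof of Proposition \ref{prop:(dwX)} — and both ultimately need $\mathcal{X}$ closed under direct summands, which the paper also uses tacitly. A small bonus of your formulation of the inclusion $(\mathcal{E}\text{-}{\rm dg}\widetilde{\mathcal{X}})^{\perp}\subseteq\widetilde{\mathcal{E}}$ via the vanishing of ${\rm H}_n{\rm Hom}_{\mathcal{A}}(X,E)$ is that it avoids the paper's display of $0\to{\rm Z}_nE\to E_n\to{\rm Z}_{n-1}E\to 0$ as a short exact sequence, which presupposes acyclicity of $E$ that has not yet been established at that point.
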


\begin{proof}
Note that for any $X\in\mathcal{E}\text{-}{\rm dg}\widetilde{\mathcal{X}}$ and any $E\in \widetilde{\mathcal{E}}$, we have
$${\rm Ext}_{{\rm Ch}(\mathcal{E})}^1(X, E) = {\rm Ext}_{dw}^1(X, E) \cong
{\rm Hom}_{{\rm Ch}(\mathcal{A})}(X, \Sigma E)/\sim = 0,$$
where $\sim$ is chain homotopy. This implies that $\mathcal{E}\text{-}{\rm dg}\widetilde{\mathcal{X}} \subseteq {^{\perp}\widetilde{\mathcal{E}}}$, and
$(\mathcal{E}\text{-}{\rm dg}\widetilde{\mathcal{X}})^{\perp} \supseteq \widetilde{\mathcal{E}}$.

Let $E\in (\mathcal{E}\text{-}{\rm dg}\widetilde{\mathcal{X}})^{\perp}$. Consider the sequence $0 \rightarrow {\rm Z}_nE\rightarrow E_n\stackrel{\alpha}\rightarrow {\rm Z}_{n-1}E\rightarrow 0$. Let $f: X\rightarrow {\rm Z}_{n-1}E$ be any map for which $X\in \mathcal{X}$. By \cite[Lemma 3.1(3)]{Gil04}, $f$ induces a unique chain map $\widetilde{f}: {\rm S}^{n-1}X\rightarrow E$. Indeed, it is easy to check that $\widetilde{f}_{n-1} = \iota f$ where $\iota: {\rm Z}_{n-1}E\rightarrow E_{n-1}$, and $\widetilde{f}_i =0$ for all $i\neq n-1$. Since ${\rm S}^{n-1}X \in \mathcal{E}\text{-}{\rm dg}\widetilde{\mathcal{X}}$, $\widetilde{f}$ is a null homotopy, and then there is a map $g: X\rightarrow E_n$ such that $f = \alpha g$. This implies that each $0 \rightarrow {\rm Z}_nE\rightarrow E_n\rightarrow {\rm Z}_{n-1}E\rightarrow 0$ is right $\mathcal{X}$-acyclic, and then $E\in \widetilde{\mathcal{E}}$.

Let $C\in {^{\perp}\widetilde{\mathcal{E}}}$. Since $\mathcal{X}$ is contravariantly finite, for any $C_n$ there is a right $\mathcal{X}$-acyclic sequence $0 \rightarrow K\rightarrow X_n \stackrel{p}\rightarrow C_n\rightarrow 0$ for which $X_n\in \mathcal{X}$. Consider the pullback of $p: X_n\rightarrow C_n$ and $d_{n+1}: C_{n+1}\rightarrow C_n$:
$$\xymatrix@C=22pt@R=20pt{ 0 \ar[r] &K \ar[r]^{} \ar@{=}[d] &M \ar[r]^{q\ \ }\ar[d]^{\delta} &C_{n+1}\ar[d]^{d_{n+1}} \ar[r] &0\\
0 \ar[r] &K\ar[r] & X_n\ar[r]^{p} & C_n \ar[r] &0 .}$$
Since $d_{n+1}d_{n+2} = 0$, there exists a map $\eta: C_{n+2}\rightarrow M$ such that $\delta\eta = 0$ and $q\eta = d_{n+2}$. By the universal property of pullback, it follows that $\eta d_{n+3} = 0$.
Hence, we can construct a complex
$$D = \cdots \longrightarrow C_{n+3} \stackrel{d_{n+3}}\longrightarrow C_{n+2} \stackrel{\eta}\longrightarrow M \stackrel{\delta} \longrightarrow X_n \stackrel{d_np} \longrightarrow C_{n-1} \stackrel{d_{n-1}} \longrightarrow C_{n-2}\longrightarrow \cdots $$
and get a short exact sequence $0\rightarrow {\rm D}^{n+1}K\rightarrow D\rightarrow C\rightarrow 0$ in ${\rm Ch}(\mathcal{A}, \mathcal{E})$. Note that ${\rm D}^{n+1}K \in \widetilde{\mathcal{E}}$. It follows from ${\rm Ext}_{{\rm Ch}(\mathcal{E})}^1(C, {\rm D}^{n+1}K) = 0$ that the above sequence of complexes is split, and in particular, the short exact sequence  $0 \rightarrow K\rightarrow X_n \stackrel{p}\rightarrow C_n\rightarrow 0$ is split. This implies that $C_n \in \mathcal{X}$ for any $n$.

For any $E\in \widetilde{\mathcal{E}}$, as ${\rm Hom}_{{\rm Ch}(\mathcal{A})}(C, E)/\sim\cong{\rm Ext}_{{\rm Ch}(\mathcal{E})}^1(C, \Sigma^{-1} E) = 0$, it implies that $C\in \mathcal{E}\text{-}{\rm dg}\widetilde{\mathcal{X}}$. Hence $(\mathcal{E}\text{-}{\rm dg}\widetilde{\mathcal{X}}, \widetilde{\mathcal{E}})$ is a cotorsion pair in ${\rm Ch}(\mathcal{A}, \mathcal{E})$.
\end{proof}

In order to prove the completeness of the above cotorsion pair, we need to make some preparations.

\begin{lemma}\label{admissiblebalancedpairs}
\begin{enumerate}
\item
If a short exact sequence $0\rightarrow A\rightarrow B\rightarrow C\rightarrow 0$ in $\mathcal{A}$ is right $\mathcal{X}$-acyclic, then it remains acyclic after applying ${\rm Hom}_\mathcal{A}(\bigoplus_{i}X^i, -)$ for any family of objects $\{X^{i}\}_{i\in I}$ in $\mathcal{X}$;
\item If an object $Z\in\mathcal{A}$ has the property that ${\rm Hom}_\mathcal{A}(Z, -)$ sends every right $\mathcal{X}$-acyclic complex to an acyclic complex, then $Z\in \mathcal{X}$. Consequently, $\mathcal{X}$ is closed under direct sums;
\item
If $X^{1}\stackrel{f^{1}}\longrightarrow X^{2}\stackrel{f^{2}}\longrightarrow X^{3}\longrightarrow \cdots$ is a direct system of objects in $\mathcal{X}$ with each $f^{i}$ split monic, then the direct limit $\varinjlim_{i\geq 1} X^{i} \in \mathcal{X}$.
\end{enumerate}
\end{lemma}

\begin{proof}
One may verify (1) directly. Since the right $\mathcal{X}$-approximation of $Z$ is split epic by definition, it follows that $Z \in \mathcal{X}$. Consequently, $\mathcal{X}$ is closed under direct sums by (1).

(3) Denote by $Y^i$ the cokernel of $f^i$. Then for all $i\ge 1$ we have $X^{i+1}\cong X^i\bigoplus Y^i$ with $Y^i \in \mathcal{X}$. One can check that the direct limit of the direct system is exactly $X^1\bigoplus Y^1\bigoplus Y^2\bigoplus \cdots,$ which lies in $\mathcal{X}$ by (2).
\end{proof}

A complex $(C,d)$ is said to be {\em right $\mathcal{X}$-acyclic} {\em at degree $n$}, if for any $X\in \mathcal{X}$, the complex ${\rm Hom}_{\mathcal{A}}(X, C)$ is acyclic at ${\rm Hom}_{\mathcal{A}}(X, C_n)$. One can check directly that $(C,d)$ is right $\mathcal{X}$-acyclic at degree $n$ if and only if the sequence $0\rightarrow {\rm Hom}_{\mathcal{A}}(X,{\rm Z}_{n+1}C) \rightarrow {\rm Hom}_{\mathcal{A}}(X, C_{n+1})\rightarrow {\rm Hom}_{\mathcal{A}}(X,{\rm Z}_nC)\rightarrow 0$ is exact for any $X\in \mathcal{X}$, that is, $0\rightarrow {\rm Z}_{n+1}C \rightarrow C_{n+1}\rightarrow {\rm Z}_nC\rightarrow 0$ is right $\mathcal{X}$-acyclic.

\begin{lemma}\label{lem:directsystem}
Let $A^{1}\stackrel{f^{1}}\longrightarrow A^{2}\stackrel{f^{2}}\longrightarrow A^{3}\longrightarrow \cdots$ be a direct system in ${\rm Ch}(\mathcal{A})$ with $A =\varinjlim_{i\geq 1} A^{i}$ such that each $f^{i}$ is monic and degree-wise split.
\begin{enumerate}
\item
For $k\in \mathbb{Z}$, if $A^i$ is right $\mathcal{X}$-acyclic at degrees $k,k-1,k-2$ and $k-3$ for any $i\geq1$, then $A$ is right $\mathcal{X}$-acyclic at degree $k$;
\item
If $A^{i}$ lies in $\mathcal{E}\text{-}{\rm dg}\widetilde{\mathcal{X}}$ for any $i\geq1$, then $A\in \mathcal{E}\text{-}{\rm dg}\widetilde{\mathcal{X}}$.
\end{enumerate}
\end{lemma}

\begin{proof}
(1) Since $A^i$ is right $\mathcal{X}$-acyclic at degree $k$, we obtain a short exact sequence $0\rightarrow {\rm Z}_{k+1}A^i \rightarrow A^i_{k+1}\rightarrow {\rm Z}_kA^i\rightarrow 0$ which is right $\mathcal{X}$-acyclic. By Proposition \ref{lem:XY-ac}(3), one has that $0\rightarrow \bigoplus_{i}{\rm Z}_{k+1}A^i \rightarrow \bigoplus_{i}A^i_{k+1}\rightarrow \bigoplus_{i}{\rm Z}_kA^i\rightarrow 0$ is also right $\mathcal{X}$-acyclic.

Considering degrees $k-1$ and $k-2$ in a similar method, one has the following right $\mathcal{X}$-acyclic complexes
$$0\rightarrow \bigoplus_{i}{\rm Z}_{k}A^i \rightarrow \bigoplus_{i}A^i_{k}\rightarrow \bigoplus_{i}{\rm Z}_{k-1}A^i\rightarrow 0,$$
$$0\rightarrow \bigoplus_{i}{\rm Z}_{k-1}A^i \rightarrow \bigoplus_{i}A^i_{k-1}\rightarrow \bigoplus_{i}{\rm Z}_{k-2}A^i\rightarrow 0.$$
Applying the functor ${\rm Hom}_{\mathcal{A}}(X, -)$ and pasting the sequences together, we conclude that the complex $\bigoplus_{i}A^i$ is right $\mathcal{X}$-acyclic at degree $k$. Note that $A^i$ is right $\mathcal{X}$-acyclic at degrees $k-1,k-2$ and $k-3$  for any $i\geq1$. A similar argument yields that $\bigoplus_{i}A^i$ is also right $\mathcal{X}$-acyclic at degree $k-1$.

Denote by $\nu: \bigoplus_{i}A^i\rightarrow \bigoplus_{i}A^i$ the morphism induced by $f^i$. Then $A = {\rm Coker}(1-\nu)$ and the short exact sequence of complexes
\begin{equation}
0\longrightarrow \bigoplus_{i}A^i \stackrel{1-\nu}\longrightarrow \bigoplus_{i}A^i\longrightarrow A\longrightarrow 0 \tag{$*$}
\end{equation}
is degree-wise split by \cite[Lemma 64]{Mur06}. For any $X\in \mathcal{X}$, by applying ${\rm Hom}_{\mathcal{A}}(X, -)$ we get a degree-wise split short exact sequence of complexes of abelian groups
$$0\longrightarrow {\rm Hom}_{\mathcal{A}}(X, \bigoplus_{i}A^i) \longrightarrow {\rm Hom}_{\mathcal{A}}(X, \bigoplus_{i}A^i)\longrightarrow {\rm Hom}_{\mathcal{A}}(X, A)\longrightarrow 0,$$
where ${\rm Hom}_{\mathcal{A}}(X, \bigoplus_{i}A^i)$ is acyclic at degrees $k$ and $k-1$ by argument above. Thus ${\rm Hom}_{\mathcal{A}}(X, A)$ is acyclic at degree $k$ by the long exact sequence in homology, as desired.

(2) For $E\in \widetilde{\mathcal{E}}$, the sequence $(*)$ induces a short exact sequence of Hom-complexes
$$0 \rightarrow {\rm Hom}_{\mathcal{A}}( A, E)\rightarrow {\rm Hom}_{\mathcal{A}}(\bigoplus_{i} A^{i},E)\rightarrow {\rm Hom}_{\mathcal{A}}(\bigoplus_{i} A^{i},E) \rightarrow 0.$$
Since $\prod_{i}{\rm Hom}_{\mathcal{A}}( A^{i},E) \cong {\rm Hom}_{\mathcal{A}}(\bigoplus_{i} A^{i},E)$ is acyclic, so is ${\rm Hom}_{\mathcal{A}}( A, E)$. This implies that every chain map $A\rightarrow E$ is null homotopic by Lemma \ref{lem:Gil04}. The rest of the assertions come from Lemma \ref{admissiblebalancedpairs}(3).
\end{proof}

Recall that a \emph{left brutal truncation} (resp. \emph{right truncation}) at degree $i$ of a complex $(C,d)$ is a complex of the form $0\rightarrow C_i \rightarrow C_{i-1} \rightarrow C_{i-2} \rightarrow \cdots$ (resp. $\cdots \rightarrow C_{i+2}\rightarrow C_{i+1}\rightarrow {\rm Ker}d_i\rightarrow 0$).
For convenience, in what follows we denote a direct system $K^{1}\stackrel{f^{1}}\longrightarrow K^{2}\stackrel{f^{2}}\longrightarrow K^{3}\longrightarrow \cdots$ simply by $(K^n, f^n)$.

\begin{lemma}\label{lem:directlimit}
Let $(A^n, f^n)$, $(B^n, g^n)$ and $(C^n, h^n)$ be three direct systems of complexes in ${\rm Ch}(\mathcal{A})$ such that for each $n\ge 1$, $0\rightarrow A^n\rightarrow B^n\rightarrow C^n\rightarrow 0$ is a degree-wise left $\mathcal{Y}$-acyclic short exact sequence of complexes.

Assume further that for every integer $i$, there exists $m\gg 0$ such that for each $n\geq m$, $h^n_i:C^n_i\rightarrow C^{n+1}_i$ remains epic by applying ${\rm Hom}_{\mathcal{A}}(-,\mathcal{Y})$.
Then there is a short exact sequence $$0\rightarrow A\rightarrow B\rightarrow C\rightarrow 0$$ in ${\rm Ch}(\mathcal{A}, \mathcal{E})$ induced by the direct limits $A=\varinjlim_{n\geq 1}A^n$, $B=\varinjlim_{n\geq 1}B^n$ and $C=\varinjlim_{n\geq 1}C^n$.
In particular, if for each $n\geq 1$, $C^n$ is the left brutal truncation at degree $n$ or the right truncation at degree $-n$ of a complex $C$, then $0\longrightarrow A \longrightarrow B\longrightarrow C\longrightarrow 0$ is exact  in ${\rm Ch}(\mathcal{A}, \mathcal{E})$.
\end{lemma}

\begin{proof}
It is easy to check that for any integer $s> 1$, $\varinjlim_{j\geq s} A^{j} = A$, $\varinjlim_{j\geq s} B^{j} = B$ and $\varinjlim_{j\geq s} C^{j} = C$.

Given an integer $i$, let $m$ be as assumed. For any $Y\in \mathcal{Y}$, by applying ${\rm Hom}_{\mathcal{A}}(-, Y)$ to  the $\geq m$ part of $i$-th degree of the short exact sequence of complexes, we get the following diagram
$$\xymatrix@C=22pt@R=20pt{ & 0\ar[d] & 0\ar[d] & 0\ar[d] \\
\cdots \ar[r] & {\rm Hom}_{\mathcal{A}}(C^{m+2}_i, Y) \ar@{->>}[r] \ar[d]  & {\rm Hom}_{\mathcal{A}}(C^{m+1}_i, Y) \ar[d]\ar@{->>}[r] &  {\rm Hom}_{\mathcal{A}}(C^m_i, Y) \ar[d]  \\
\cdots \ar[r] & {\rm Hom}_{\mathcal{A}}(B^{m+2}_i, Y) \ar[r] \ar[d]  & {\rm Hom}_{\mathcal{A}}(B^{m+1}_i, Y) \ar[d]\ar[r] &  {\rm Hom}_{\mathcal{A}}(B^m_i, Y) \ar[d]  \\
\cdots \ar[r] & {\rm Hom}_{\mathcal{A}}(A^{m+2}_i, Y) \ar[r]\ar[d] & {\rm Hom}_{\mathcal{A}}(A^{m+1}_i, Y) \ar[r]\ar[d] & {\rm Hom}_{\mathcal{A}}(A^m_i, Y)\ar[d] \\
 & 0 & 0 & \;0,
  }$$
where all maps in the first row are epic. Since $0\longrightarrow A^j \longrightarrow B^j\longrightarrow C^j\longrightarrow 0$ is left $\mathcal{Y}$-acyclic in each degree, all the columns are exact. Then we have the following commutative diagram
$$\xymatrix@C=8pt@R=8pt{
0\ar[r] &\varprojlim_{n\geq m}{\rm Hom}_{\mathcal{A}}(C^n_i, Y) \ar[r]\ar[d]^{\cong} &\varprojlim_{n\geq m}{\rm Hom}_{\mathcal{A}}(B^n_i, Y) \ar[r]\ar[d]^{\cong} &\varprojlim_{n\geq m}{\rm Hom}_{\mathcal{A}}(A^n_i, Y) \ar[d]^{\cong}\ar[r] &0\\
0\ar[r]&{\rm Hom}_{\mathcal{A}}(\varinjlim_{n\geq m}C^n_i, Y)\ar[r]&{\rm Hom}_{\mathcal{A}}(\varinjlim_{n\geq m}B^n_i, Y) \ar[r] &{\rm Hom}_{\mathcal{A}}(\varinjlim_{n\geq m}A^n_i, Y)
\ar[r] &0. }$$
Since the first row in the above diagram is exact by \cite[Theorem 1.6.13]{EJ00}, so is the second row, and therefore the $i$-th degree of $0\longrightarrow A \longrightarrow B\longrightarrow C\longrightarrow 0$ is left $\mathcal{Y}$-acyclic.

In the special case of $(C^n, f^n)$ being the left brutal truncations or the right truncation of $C$, for every integer $i$, there exists $m\gg 0$ such that for all $n\geq m$, $h^n_i$ becomes identity, which satisfies the assumption. This completes the proof.
\end{proof}

\begin{lemma}\label{lem:l-E-ap}
Let $C$ be any complex. Then there exists a short exact sequence $0\rightarrow C\rightarrow E\rightarrow X\rightarrow 0$ in ${\rm Ch}(\mathcal{A}, \mathcal{E})$ with $E \in \widetilde{\mathcal{E}}$ and $X\in \mathcal{E}\text{-}{\rm dg}\widetilde{\mathcal{X}}$.
\end{lemma}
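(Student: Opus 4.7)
The goal is one direction of completeness for the cotorsion pair $(\mathcal{E}\text{-}{\rm dg}\widetilde{\mathcal{X}},\widetilde{\mathcal{E}})$ of Proposition~\ref{prop:(dgX,E)}: every complex $C$ admits a special left $\widetilde{\mathcal{E}}$-preenvelope. The plan is to carry out a small-object-style argument in ${\rm Ch}(\mathcal{A},\mathcal{E})$, iteratively attaching disks over objects of $\mathcal{X}$ to $C$ in order to kill every cycle detected by a test object from $\mathcal{X}$, and then take a transfinite filtered colimit.

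The basic one-step construction goes as follows. For a complex $K$, let $I(K)$ denote the family of triples $(n,X,f)$ with $n\in\mathbb{Z}$, $X\in\mathcal{X}$, and $f\colon{\rm S}^n X\to K$ a chain map (equivalently, a cycle $X\to{\rm Z}_n K$). Consider the canonical map $\phi\colon\bigoplus_{(n,X,f)}{\rm S}^n X\to K$ together with the degreewise split admissible monomorphism $\iota\colon\bigoplus_{(n,X,f)}{\rm S}^n X\hookrightarrow\bigoplus_{(n,X,f)}{\rm D}^{n+1}X$. Pushing out $\phi$ along $\iota$ yields a degreewise split admissible monomorphism $K\hookrightarrow T(K)$ in ${\rm Ch}(\mathcal{A},\mathcal{E})$ whose cokernel is $\bigoplus_{(n,X,f)}{\rm S}^{n+1}X\in\mathcal{E}\text{-}{\rm dg}\widetilde{\mathcal{X}}$. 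Since each disk ${\rm D}^{n+1}X$ is contractible, every $f\in I(K)$ becomes null-homotopic once composed with $K\hookrightarrow T(K)$.

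I would then iterate transfinitely: set $C^0=C$, $C^{i+1}=T(C^i)$ at successors, and $C^\lambda=\varinjlim_{i<\lambda}C^i$ at limits, relying on AB5 to keep the chain degreewise split admissible monic in ${\rm Ch}(\mathcal{A},\mathcal{E})$ and its successive cokernels in $\mathcal{E}\text{-}{\rm dg}\widetilde{\mathcal{X}}$. For a sufficiently large ordinal $\kappa$ such that every chain map ${\rm S}^n X\to C^\kappa$ with $X\in\mathcal{X}$ factors through some $C^i$ with $i<\kappa$, I would set $E:=C^\kappa$ and $X:=E/C$. Then $C\hookrightarrow E$ is degreewise split admissible monic; the iterated extension structure together with Lemma~\ref{lem:directsystem}(2) identifies $X$ as an object of $\mathcal{E}\text{-}{\rm dg}\widetilde{\mathcal{X}}$; and the choice of $\kappa$ forces every chain map ${\rm S}^n X\to E$ with $X\in\mathcal{X}$ to be killed at the next pushout stage, showing $E\in\widetilde{\mathcal{E}}$.

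The main obstacle will be ensuring termination of the iteration, i.e., producing an ordinal $\kappa$ with the required factorization property. This is a smallness/presentability condition on the test objects of $\mathcal{X}$ relative to the chain of degreewise split inclusions: it is automatic in the locally presentable or Grothendieck settings that underlie the paper's applications, and in the general AB5 setting it can be handled by the standard cardinal bookkeeping familiar from Eklof--Trlifaj style arguments, provided $\mathcal{X}$ is closed under the direct sums that enter the construction (which is used implicitly when identifying each $X_n$ as an object of $\mathcal{X}$).
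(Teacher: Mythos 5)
Your overall strategy (attach disks over objects of $\mathcal{X}$ to kill cycles, then pass to a colimit of degreewise split monomorphisms) is the same ``killing cycles'' idea the paper uses, but your implementation leaves genuine gaps in the stated generality, namely an arbitrary AB5 abelian category $\mathcal{A}$ with an admissible balanced pair. First, your one-step construction sums over \emph{all} triples $(n,X,f)$ with $X\in\mathcal{X}$; since $\mathcal{X}$ is a full subcategory closed under isomorphisms of an arbitrary abelian category, this indexing family is a proper class and the coproduct $\bigoplus_{(n,X,f)}{\rm S}^nX$ need not exist. Second, and more seriously, the termination of your transfinite iteration genuinely requires a smallness/presentability hypothesis on the test objects, and your closing remark that this is automatic in Grothendieck settings or handled by ``Eklof--Trlifaj style cardinal bookkeeping'' does not apply here: there is no generator, no cardinal bound on the objects of $\mathcal{X}$, no assumption that $\mathcal{X}$ is skeletally small, and no assumption that $\mathcal{X}$ is closed under the (possibly large) direct sums your cokernels would require in order to lie degreewise in $\mathcal{X}$. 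These are not routine details to be deferred; they are precisely hypotheses the lemma does not have.

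The idea you are missing is the one hypothesis the paper does have and exploits: $\mathcal{X}$ is \emph{contravariantly finite}. Instead of attaching one disk for every map from every object of $\mathcal{X}$ into ${\rm Z}_nC$, the paper chooses a single right $\mathcal{X}$-approximation $\pi_{n+1}\colon X_{n+1}\to{\rm Z}_nC$ and attaches the single disk built on $X_{n+1}$ (i.e., replaces $C_{n+1}$ by $C_{n+1}\oplus X_{n+1}$ with differential $(d_{n+1},\iota_n\pi_{n+1})$). Every test map $X\to{\rm Z}_nC$ factors through $\pi_{n+1}$, and since the balanced pair is admissible, $\pi_{n+1}$ is epic, so the modified complex is already right $\mathcal{X}$-acyclic at degree $n$ after this one step. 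This collapses your transfinite small-object argument to a countable iteration of degreewise split monomorphisms indexed by a sequence of degrees: the acyclicity of the colimit $E$ and the membership of the quotient in $\mathcal{E}\text{-}{\rm dg}\widetilde{\mathcal{X}}$ are then obtained from the Hom-complex computation of Lemma~\ref{lem:directsystem} (which uses only that the transition maps are degreewise split), not from any factorization or presentability property, and each degree of $X=E/C$ stabilizes to an object of $\mathcal{X}$ rather than a large coproduct. If you replace your class-indexed attaching step by the single approximation $\pi_{n+1}$, your argument becomes essentially the paper's proof; as written, it proves the lemma only under additional hypotheses.
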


\begin{proof}
Let $n$ be an integer. For the complex $(C, d)$, denote by $\iota_n:{\rm Z}_nC\rightarrow C_n$ the canonical morphism. Choose a right $\mathcal{X}$-approximation $\pi_{n+1}: X_{n+1}\rightarrow {\rm Z}_nC$. Then we have a complex $$(\overline{C}, \delta) =  \cdots \rightarrow C_{n+3} \rightarrow C_{n+2}\rightarrow C_{n+1}\oplus X_{n+1}\rightarrow C_n \rightarrow C_{n-1}\rightarrow\cdots,$$ where $\delta_{n+2} = \left(\begin{smallmatrix}d_{n+2}\\0 \end{smallmatrix}\right)$, $\delta_{n+1} = (d_{n+1}, \iota_n\pi_{n+1})$, and $\delta_i = d_i$ for any $i\neq n+1, n+2$. Note that $\overline{C}$ is acyclic at degree $n$, that is,
${\rm B}_n\overline{C} = {\rm Z}_nC = {\rm Z}_n\overline{C}$. Since $\pi_{n+1}: X_{n+1}\rightarrow {\rm Z}_nC$ is a right $\mathcal{X}$-approximation, the sequence $C_{n+1}\oplus X_{n+1} \rightarrow {\rm Z}_nC \rightarrow 0$ is right $\mathcal{X}$-acyclic. Then, $\overline{C}$ is right $\mathcal{X}$-acyclic at degree $n$. Moreover, there is a chain map $f: C\rightarrow \overline{C}$ with $f_{n+1} = \left(\begin{smallmatrix} 1 \\0 \end{smallmatrix}\right)$ and $f_i = \text{Id}$ for all $i\neq n+1$. Then $f$ is a degree-wise split monomorphism with ${\rm Coker}f = {\rm S}^{n+1}X_{n+1} \in \mathcal{E}\text{-}{\rm dg}\widetilde{\mathcal{X}}$.

Similar to the above argument, one has a complex
$$\overline{\overline{C}} =  \cdots \rightarrow \overline{C}_{n+3} \rightarrow \overline{C}_{n+2}\oplus X_{n+2}\rightarrow \overline{C}_{n+1}\rightarrow \overline{C}_n \rightarrow \overline{C}_{n-1}\rightarrow\cdots $$
such that it is right $\mathcal{X}$-acyclic at both degrees $n+1$ and $n$, there exists a degree-wise split and monic chain map $\overline{f}:\overline{C} \rightarrow \overline{\overline{C}}$ with ${\rm Coker}\overline{f} = {\rm S}^{n+2}X_{n+2}  \in \mathcal{E}\text{-}{\rm dg}\widetilde{\mathcal{X}}$. Then we have the following commutative diagram
$$\xymatrix@C=22pt@R=20pt{ & & 0\ar[d] &0\ar[d] \\
0\ar[r] &C \ar[r]^f\ar@{=}[d] &\overline{C} \ar[r]\ar[d]^{\overline{f}} &{\rm Coker}f \ar[r]\ar[d] & 0\\
0\ar[r]&C\ar[r] &\overline{\overline{C}} \ar[r]\ar[d] &L \ar[r]\ar[d] & 0\\
& & {\rm Coker}\overline{f} \ar@{=}[r] \ar[d] &{\rm Coker}\overline{f} \ar[d]\\ & & 0 & \;0,}$$
where the first row, the second row, and the second column are exact in ${\rm Ch}(\mathcal{A}, \mathcal{E})$. Then the third column is also exact in ${\rm Ch}(\mathcal{A}, \mathcal{E})$, which implies that $L\in \mathcal{E}\text{-}{\rm dg}\widetilde{\mathcal{X}}$. Thus $\overline{f}f: C \rightarrow \overline{\overline{C}}$ is a degree-wise split and monic chain map with cokernel in $\mathcal{E}\text{-}{\rm dg}\widetilde{\mathcal{X}}$ and $\overline{\overline{C}}$ right $\mathcal{X}$-acyclic at both degrees $n+1$ and $n$. Repeating the process again we get a degree-wise split and monic chain map $C \rightarrow C'$ with cokernel in $\mathcal{E}\text{-}{\rm dg}\widetilde{\mathcal{X}}$ and $C'$ right $\mathcal{X}$-acyclic at degrees $n+3, n+2, n+1$ and $n$.

Let $(m_1, m_2, m_3, \cdots) = (0, 1, -1, 2, -2, \cdots)$ and $(n_1, n_2, n_3, n_4, n_5, n_6, n_7, \cdots) = (m_1, m_2, m_1, m_2, m_3, m_1, m_2, m_3, m_4, \cdots)$, which means that every integer occurs infinitely often in the original sequence $n_1, n_2, n_3, \cdots$. We perform the above construction to obtain a series of complexes and chain maps $C = C^0$, $C^0\rightarrow C^1$, $C^1\rightarrow C^2$, $C^2\rightarrow C^3, \cdots$, where $C^i$ is right $\mathcal{X}$-acyclic at degrees $n_i, n_i-1, n_i-2$ and $n_i-3$, each chain map is monic and degree-wise split, and $D^i = {\rm Coker}(C^{i-1}\rightarrow C^i)$ is in $\mathcal{E}\text{-}{\rm dg}\widetilde{\mathcal{X}}$ for $i\geq 1$. We claim that for any $i> j$, the cokernel of the composition ${\rm Coker}(C^j\rightarrow C^i)$ lies in $\mathcal{E}\text{-}{\rm dg}\widetilde{\mathcal{X}}$.

Given $j$, set $L^i = {\rm Coker}(C^j\rightarrow C^i)$ for any $i> j$. We have the following commutative diagram
$$\xymatrix@C=22pt@R=20pt{ & & 0\ar[d] &0\ar[d] \\ 0\ar[r] &C^j \ar[r]\ar@{=}[d] &C^i \ar[r]\ar[d]^{f^i} &L^i \ar[r]\ar[d]^{g^i} & 0\\
0\ar[r]&C^j\ar[r] &C^{i+1} \ar[r]\ar[d] &L^{i+1} \ar[r]\ar[d] & 0\\ & & D^{i+1} \ar@{=}[r] \ar[d] &D^{i+1} \ar[d]\\ & & 0 & \;0.}$$
From the right column, we begin with $L^{j+1}\in \mathcal{E}\text{-}{\rm dg}\widetilde{\mathcal{X}}$ to infer inductively that $L^i \in \mathcal{E}\text{-}{\rm dg}\widetilde{\mathcal{X}}$ for all $i> j$. Moreover, it is easy to check that each $g^i: L^i\rightarrow L^{i+1}$ is a degree-wise split monomorphism with cokernel in $\mathcal{E}\text{-}{\rm dg}\widetilde{\mathcal{X}}$.

By setting $j = 0$ we get the diagram
$$\xymatrix@C=22pt@R=20pt{ 0\ar[r] &C \ar[r]\ar@{=}[d] &C^i \ar[r]\ar[d]^{f^i} &L^i \ar[r]\ar[d]^{g^i} & 0\\
0\ar[r]&C\ar[r] &C^{i+1} \ar[r] &L^{i+1} \ar[r] & 0.}$$
Let $E = \varinjlim_{i\geq 0} C^i$ and $X = \varinjlim_{i\geq 0} L^i$. Since each $f^i$ and $g^i$ are degree-wise split monic, by Lemma \ref{lem:directlimit} we have a short exact sequence $0\rightarrow C\rightarrow E\rightarrow X\rightarrow 0$ in ${\rm Ch}(\mathcal{A}, \mathcal{E})$. By Lemma \ref{lem:directsystem}(2), we have $X\in \mathcal{E}\text{-}{\rm dg}\widetilde{\mathcal{X}}$.

Moreover, for any $k$, there exists infinitely many natural numbers $s_1, s_2, s_3, \cdots$ such that $C^{s_1}, C^{s_2}, C^{s_3}, \cdots$ are right $\mathcal{X}$-acyclic at degrees $k, k-1, k-2$ and $k-3$. Consider the direct system $C^{s_1}\stackrel{h^{1}}\longrightarrow C^{s_2}\stackrel{h^{2}}\longrightarrow \cdots$, where $h^{i} = f^{s_{i+1}-1}\cdots f^{s_i+1} f^{s_i}$. By Lemma \ref{lem:directsystem}(1), we obtain that $E = \varinjlim_{i\geq 0} C^i = \varinjlim_{i\geq 1} C^{s_i}$ is right $\mathcal{X}$-acyclic at degree $k$. Since $k$ is arbitrary, we have $E\in \widetilde{\mathcal{E}}$, as required.
\end{proof}

Inspired by \cite[Lemma 3.4]{YL11}, we have the following:

\begin{lemma}\label{lem:r-X-ap}
Let $E\in \widetilde{\mathcal{E}}$ be any right $\mathcal{X}$-acyclic complex. Then there exists a short exact sequence $0\rightarrow K\rightarrow X\rightarrow E\rightarrow 0$ in ${\rm Ch}(\mathcal{A}, \mathcal{E})$, for which $X \in \widetilde{\mathcal{X}}_{\mathcal{E}}$ and $K\in \widetilde{\mathcal{E}}$.
\end{lemma}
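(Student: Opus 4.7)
The plan is to construct a contractible complex $X\in\widetilde{\mathcal{X}}_{\mathcal{E}}$ together with a degreewise admissible epimorphism $\phi\colon X\twoheadrightarrow E$ in $(\mathcal{A},\mathcal{E})$; the kernel $K$ will then lie in $\widetilde{\mathcal{E}}$ by a long exact homology sequence on Hom-complexes.

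For each $n\in\mathbb{Z}$, since the balanced pair is admissible I pick a right $\mathcal{X}$-approximation $\pi_n\colon P_n\twoheadrightarrow {\rm Z}_nE$ with $P_n\in\mathcal{X}$; the associated short exact sequence lies in $\mathcal{E}$ (by the approximation property for any test object in $\mathcal{X}$). Because $E\in\widetilde{\mathcal{E}}$, the sequence $0\to {\rm Z}_nE\to E_n\twoheadrightarrow{\rm Z}_{n-1}E\to 0$ also lies in $\mathcal{E}$, so $\pi_{n-1}\colon P_{n-1}\to{\rm Z}_{n-1}E$ lifts through $E_n\twoheadrightarrow{\rm Z}_{n-1}E$ to a morphism $\sigma_{n-1}\colon P_{n-1}\to E_n$ with $d^E_n\sigma_{n-1}=\iota_{n-1}\pi_{n-1}$, where $\iota_m\colon{\rm Z}_mE\hookrightarrow E_m$ is the inclusion. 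I then set $X_n=P_n\oplus P_{n-1}$ with differential $d_n^X(x,y)=(y,0)$ and define $\phi_n=(\iota_n\pi_n,\sigma_{n-1})\colon X_n\to E_n$. A direct check shows $\phi$ is a chain map, $X$ is contractible via $s_n(x,y)=(0,x)$ (so ${\rm Hom}_\mathcal{A}(X',X)$ is acyclic for every $X'$), and ${\rm Z}_nX=P_n\in\mathcal{X}$; hence $X\in\widetilde{\mathcal{X}}_{\mathcal{E}}$.

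The heart of the argument is to verify that each $\phi_n$ is an admissible epimorphism in $(\mathcal{A},\mathcal{E})$. Epicness follows from the short five lemma applied to the morphism of short exact sequences with top row $0\to P_n\to P_n\oplus P_{n-1}\to P_{n-1}\to 0$ and bottom row $0\to {\rm Z}_nE\to E_n\to {\rm Z}_{n-1}E\to 0$, whose outer vertical maps $\pi_n$ and $\pi_{n-1}$ are epic. To show $\phi_n$ is a right $\mathcal{X}$-approximation, let $f\colon X'\to E_n$ with $X'\in\mathcal{X}$: lift $d^E_nf\colon X'\to{\rm Z}_{n-1}E$ through $\pi_{n-1}$ to $\tilde f$, observe that $f-\sigma_{n-1}\tilde f$ factors through ${\rm Z}_nE$, then lift through $\pi_n$ to $\tilde g$, yielding $f=\phi_n(\tilde g,\tilde f)$. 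Thus $K:=\ker\phi$ fits into a short exact sequence $0\to K\to X\to E\to 0$ in ${\rm Ch}(\mathcal{A},\mathcal{E})$.

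Finally, to see $K\in\widetilde{\mathcal{E}}$, I fix $X'\in\mathcal{X}$ and apply ${\rm Hom}_\mathcal{A}(X',-)$ to the degreewise-$\mathcal{E}$ sequence, obtaining a short exact sequence of complexes of abelian groups. Since $X$ is contractible, ${\rm Hom}_\mathcal{A}(X',X)$ is acyclic; since $E\in\widetilde{\mathcal{E}}$, ${\rm Hom}_\mathcal{A}(X',E)$ is acyclic; so the long exact homology sequence forces ${\rm Hom}_\mathcal{A}(X',K)$ to be acyclic, proving $K\in\widetilde{\mathcal{E}}$. The main obstacle I anticipate is the right $\mathcal{X}$-approximation property of $\phi_n$, which is precisely what makes the applied Hom sequence short exact and thereby enables the Hom-complex argument to control $K$.
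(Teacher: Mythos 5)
Your construction is exactly the paper's: the paper also takes right $\mathcal{X}$-approximations $X'_n\to{\rm Z}_nE$, forms $X=\bigoplus{\rm D}^{n+1}X'_n$ (your $X_n=P_n\oplus P_{n-1}$ with $d(x,y)=(y,0)$), and builds the comparison map via the horseshoe-type lifting of \cite[Lemma~8.2.1]{EJ00}, which you simply spell out by hand. The verification that the map is degreewise a right $\mathcal{X}$-approximation and that $K\in\widetilde{\mathcal{E}}$ via the Hom-complex long exact sequence matches the paper's (terser) argument, so the proposal is correct and essentially identical in approach.
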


\begin{proof}
For $n\in \mathbb{Z}$, let $f'_n: X'_n\rightarrow {\rm Z}_nE$ be a right $\mathcal{X}$-approximation of ${\rm Z}_nE$. Since the short exact sequence $0\rightarrow {\rm Z}_nE\rightarrow E_n\rightarrow {\rm Z}_{n-1}E\rightarrow 0$ is right $\mathcal{X}$-acyclic, by \cite[Lemma 8.2.1]{EJ00} we have the following commutative diagram
$$\xymatrix@C=22pt@R=20pt{ 0 \ar[r] &X'_n \ar[r]^{} \ar[d]^{f'_n} &X'_n\oplus X'_{n-1} \ar[r]^{}\ar[d]^{f_n} &X'_{n-1}\ar[d]^{f'_{n-1}} \ar[r] &0\\
0 \ar[r] &{\rm Z}_nE\ar[r] & E_n\ar[r]^{} & {\rm Z}_{n-1}E \ar[r] &0.}$$
Since both $f'_n$ and $f'_{n-1}$ are admissible epic, so is $f_n$.

Let $X_n = X'_n\oplus X'_{n-1}$. By splicing the sequence $0\rightarrow X'_n\rightarrow X_n\rightarrow X'_{n-1}\rightarrow 0$, we get a complex $(X, d)$, where $d = \left(\begin{smallmatrix}0 & 1\\0 & 0\end{smallmatrix}\right)$.
In fact, $X = \bigoplus {\rm D}^{n+1}X'_n$, and then $X \in \widetilde{\mathcal{X}}_\mathcal{E}$.  Moreover, we get a short exact sequence of complexes $0\rightarrow K\rightarrow X\rightarrow E\rightarrow 0$ in ${\rm Ch}(\mathcal{A}, \mathcal{E})$. We infer $K\in \widetilde{\mathcal{E}}$ since both $X$ and $E$ are in $\widetilde{\mathcal{E}}$.
\end{proof}

For any complex $C$ of $R$-modules, Spaltenstein established a quasi-isomorphism $P\rightarrow C$ for which $P$ is a $K$-projective complex; see \cite[Theorem C]{Spa88}. In particular, we may reobtain Spaltenstein's quasi-isomorphism by a different method as follows if the balanced pair is specified as $(\mathcal{P}, \mathcal{I})$; compare to \cite[Theorem 2.2.1]{Gar99}.

\begin{lemma}\label{lem:dgX-ap}
For any complex $C$, there is a short exact sequence $0\rightarrow E'\rightarrow X'\rightarrow C\rightarrow 0$ in ${\rm Ch}(\mathcal{A}, \mathcal{E})$, where $X'\in \mathcal{E}\text{-}{\rm dg}\widetilde{\mathcal{X}}$ and $E' \in \widetilde{\mathcal{E}}$.
\end{lemma}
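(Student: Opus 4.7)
The plan is to mimic Salce's lemma and produce the desired sequence as a pullback of the special preenvelope provided by Lemma \ref{lem:l-E-ap} along the special precover provided by Lemma \ref{lem:r-X-ap}. The two lemmas already handle the ``hard'' analytic ingredients (direct limit construction and cycle killing), and the remaining work is essentially diagrammatic.

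Concretely, I would proceed as follows. First, apply Lemma \ref{lem:l-E-ap} to $C$ to obtain a conflation
$$0\longrightarrow C\stackrel{\iota}\longrightarrow E\longrightarrow Y\longrightarrow 0$$
in ${\rm Ch}(\mathcal{A},\mathcal{E})$ with $E\in\widetilde{\mathcal{E}}$ and $Y\in\mathcal{E}\text{-}{\rm dg}\widetilde{\mathcal{X}}$. Next, apply Lemma \ref{lem:r-X-ap} to the complex $E\in\widetilde{\mathcal{E}}$ to obtain a conflation
$$0\longrightarrow E'\longrightarrow X''\stackrel{\pi}\longrightarrow E\longrightarrow 0$$
in ${\rm Ch}(\mathcal{A},\mathcal{E})$ with $X''\in\widetilde{\mathcal{X}}_{\mathcal{E}}$ and $E'\in\widetilde{\mathcal{E}}$. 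Now form the pullback $X' = C\times_{E}X''$; since pullbacks of conflations along arbitrary maps are conflations in the exact category ${\rm Ch}(\mathcal{A},\mathcal{E})$, I obtain a commutative diagram of conflations
$$\xymatrix{0\ar[r] & E' \ar@{=}[d] \ar[r] & X' \ar[r]\ar[d] & C\ar[d]^{\iota}\ar[r] & 0 \\ 0\ar[r] & E' \ar[r] & X'' \ar[r]^{\pi} & E\ar[r] & 0}$$
in ${\rm Ch}(\mathcal{A},\mathcal{E})$, and from the monicity of $\iota$ the induced map $X'\to X''$ is an admissible monomorphism whose cokernel is canonically isomorphic to $Y$.

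It then remains to verify that $X'\in\mathcal{E}\text{-}{\rm dg}\widetilde{\mathcal{X}}$. Looking at the construction in the proof of Lemma \ref{lem:r-X-ap}, the complex $X''$ is of the form $\bigoplus_{n}{\rm D}^{n+1}X'_{n}$ with each $X'_{n}\in\mathcal{X}$; every ${\rm D}^{n+1}X'_{n}$ is contractible, so any chain map out of it is null homotopic, and the class $\mathcal{E}\text{-}{\rm dg}\widetilde{\mathcal{X}}$ is manifestly closed under direct sums, so $X''\in\mathcal{E}\text{-}{\rm dg}\widetilde{\mathcal{X}}$. Combining this with $Y\in\mathcal{E}\text{-}{\rm dg}\widetilde{\mathcal{X}}$, the two-out-of-three property of $\mathcal{E}\text{-}{\rm dg}\widetilde{\mathcal{X}}$ in conflations noted just before Proposition \ref{prop:(dgX,E)} applied to $0\to X'\to X''\to Y\to 0$ forces $X'\in\mathcal{E}\text{-}{\rm dg}\widetilde{\mathcal{X}}$. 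The top row of the diagram is then the desired sequence.

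The main potential obstacle is the verification that $X''$ belongs to the ``dg'' class $\mathcal{E}\text{-}{\rm dg}\widetilde{\mathcal{X}}$ rather than merely to $\widetilde{\mathcal{X}}_{\mathcal{E}}$; everything else is formal. In our situation this obstacle dissolves because the explicit $X''$ produced by Lemma \ref{lem:r-X-ap} is a direct sum of contractible disc complexes on objects of $\mathcal{X}$, so I would either invoke the specific construction of Lemma \ref{lem:r-X-ap} or, equivalently, record the (standard) inclusion $\widetilde{\mathcal{X}}_{\mathcal{E}}\subseteq\mathcal{E}\text{-}{\rm dg}\widetilde{\mathcal{X}}$ arising from the degreewise splitting behaviour of complexes in $\widetilde{\mathcal{E}}$ with cycles in $\mathcal{X}$.
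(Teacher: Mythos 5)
Your proposal is correct and follows essentially the same route as the paper: a special $\widetilde{\mathcal{E}}$-preenvelope of $C$ from Lemma \ref{lem:l-E-ap}, a special $\widetilde{\mathcal{X}}_{\mathcal{E}}$-precover of the resulting $E$ from Lemma \ref{lem:r-X-ap}, and the pullback of the two, with $X'\in\mathcal{E}\text{-}{\rm dg}\widetilde{\mathcal{X}}$ deduced from the conflation $0\to X'\to X''\to X\to 0$ and the two-out-of-three closure noted before Proposition \ref{prop:(dgX,E)}. Your extra care in justifying $X''\in\widetilde{\mathcal{X}}_{\mathcal{E}}\subseteq\mathcal{E}\text{-}{\rm dg}\widetilde{\mathcal{X}}$ (via contractibility of the disc summands) only makes explicit a step the paper leaves implicit.
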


\begin{proof}
It follows from Lemma \ref{lem:l-E-ap}  that there is a short exact sequence of complexes $0\rightarrow C\rightarrow E\rightarrow X\rightarrow 0$ in ${\rm Ch}(\mathcal{A}, \mathcal{E})$, where $E \in \widetilde{\mathcal{E}}$ and $X\in \mathcal{E}\text{-}{\rm dg}\widetilde{\mathcal{X}}$. For $E$, we obtain from Lemma \ref{lem:r-X-ap} a short exact sequence $0\rightarrow E'\rightarrow X''\rightarrow E\rightarrow 0$ in
${\rm Ch}(\mathcal{A}, \mathcal{E})$, where $X'' \in \widetilde{\mathcal{X}}_{\mathcal{E}}$ and $E'\in \widetilde{\mathcal{E}}$.

Consider the following pullback of $C\rightarrow E$ and $X''\rightarrow E$:
$$\xymatrix@C=22pt@R=20pt{ & 0\ar[d] & 0\ar[d] \\
 & E' \ar@{=}[r]^{} \ar[d]  &E' \ar[d]\\
0 \ar[r] & X' \ar@{-->}[d] \ar@{-->}[r] & X'' \ar[r] \ar[d] & X \ar@{=}[d]  \ar[r] &0 \\
0 \ar[r] & C\ar[r] \ar[d] & E \ar[r]^{} \ar[d] & X \ar[r] & 0\\
  & 0 & \;0.}$$
Since ${\rm Ch}(\mathcal{A}, \mathcal{E})$ is an exact category, we infer that both  $0\rightarrow E'\rightarrow X'\rightarrow C\rightarrow 0$ and
$0\rightarrow X'\rightarrow X''\rightarrow X\rightarrow 0$ are short exact sequences in ${\rm Ch}(\mathcal{A}, \mathcal{E})$. Moreover, since $X$ and $X''$ are in $\mathcal{E}\text{-}{\rm dg}\widetilde{\mathcal{X}}$, it follows that $X'\in \mathcal{E}\text{-}{\rm dg}\widetilde{\mathcal{X}}$. This completes the proof.
\end{proof}

Combining Proposition \ref{prop:(dgX,E)} with Lemmas \ref{lem:l-E-ap} and \ref{lem:dgX-ap}, we obtain the following:

\begin{proposition}\label{thm:(dgX,E)}
$(\mathcal{E}\text{-}{\rm dg}\widetilde{\mathcal{X}}, \widetilde{\mathcal{E}})$ is a complete cotorsion pair in ${\rm Ch}(\mathcal{A}, \mathcal{E})$.
\end{proposition}

We need to prove the following facts.

\begin{lemma}\label{lem:Xe}
\begin{enumerate}
\item Every complex in $\widetilde{\mathcal{X}}_\mathcal{E}$ is of the form $\bigoplus {\rm D}^nX'_n$ for $X'_n\in \mathcal{X}$. Moreover, $\widetilde{\mathcal{X}}_\mathcal{E} = \mathcal{E}\text{-}{\rm dg}\widetilde{\mathcal{X}} \cap \widetilde{\mathcal{E}}$.
\item Let $X$ be a complex with each item in $\mathcal{X}$ and $C$ a complex. For any chain map $f: X\rightarrow C$, $f$ is null homotopic if and only if it can be factored through a complex in $\widetilde{\mathcal{X}}_\mathcal{E}$.
\end{enumerate}
\end{lemma}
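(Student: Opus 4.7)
The plan is first to establish the structural decomposition in (1), and then to deduce both the equality $\widetilde{\mathcal{X}}_\mathcal{E} = \mathcal{E}\text{-}\mathrm{dg}\widetilde{\mathcal{X}} \cap \widetilde{\mathcal{E}}$ and the factorisation characterisation in (2) from the contractibility it exhibits.

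For the first assertion of (1), I would take $X \in \widetilde{\mathcal{X}}_\mathcal{E}$ and observe that each cycle sequence $0 \to \mathrm{Z}_n X \to X_n \to \mathrm{Z}_{n-1} X \to 0$ lies in $\mathcal{E}$ (this is what $X \in \widetilde{\mathcal{E}}$ amounts to at the level of cycles). Since $\mathrm{Z}_{n-1} X \in \mathcal{X}$, right $\mathcal{X}$-acyclicity lifts $1_{\mathrm{Z}_{n-1} X}$ through the epi $X_n \twoheadrightarrow \mathrm{Z}_{n-1} X$, producing a degree-wise splitting; assembling these splittings identifies $X$ with $\bigoplus_n \mathrm{D}^{n+1}(\mathrm{Z}_n X)$, a direct sum of contractible disks. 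Hence $X$ is contractible and $X_n \cong \mathrm{Z}_n X \oplus \mathrm{Z}_{n-1} X \in \mathcal{X}$, so every chain map out of $X$ is null homotopic, which gives $\widetilde{\mathcal{X}}_\mathcal{E} \subseteq \mathcal{E}\text{-}\mathrm{dg}\widetilde{\mathcal{X}} \cap \widetilde{\mathcal{E}}$. For the reverse inclusion, if $X \in \mathcal{E}\text{-}\mathrm{dg}\widetilde{\mathcal{X}} \cap \widetilde{\mathcal{E}}$, then the defining property of $\mathcal{E}\text{-}\mathrm{dg}\widetilde{\mathcal{X}}$ applied to $E = X$ forces $1_X$ to be null homotopic, so $X$ is contractible; the contraction splits each cycle sequence and exhibits $\mathrm{Z}_n X$ as a direct summand of $X_n \in \mathcal{X}$, whence $\mathrm{Z}_n X \in \mathcal{X}$.

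For (2), the `if' direction is immediate: if $f$ factors through $Y \in \widetilde{\mathcal{X}}_\mathcal{E}$ as $X \to Y \to C$, then by (1) $Y$ is contractible with some contraction $\sigma$, and pushing this homotopy along the factorisation yields a null homotopy of $f$. For `only if', given a null homotopy $f = ds + sd$, I would set $I(X) := \bigoplus_n \mathrm{D}^{n+1}(X_n)$, which lies in $\widetilde{\mathcal{X}}_\mathcal{E}$ by (1) since each $X_n \in \mathcal{X}$. Writing $I(X)_k = X_k \oplus X_{k-1}$ with differential $(a, b) \mapsto (b, 0)$, the maps $\phi_k(x) := (x, d^X_k x)$ and $\psi_k(a, b) := d^C_{k+1} s_k(a) + s_{k-1}(b)$ are both chain maps, and $\psi_k \phi_k(x) = d^C_{k+1} s_k(x) + s_{k-1}(d^X_k x) = f_k(x)$, so $f$ factors through $I(X) \in \widetilde{\mathcal{X}}_\mathcal{E}$.

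The only step requiring any care beyond routine bookkeeping is the reverse inclusion in (1), where I tacitly use that $\mathcal{X}$ is closed under direct summands in order to pass from $X_n \in \mathcal{X}$ to $\mathrm{Z}_n X \in \mathcal{X}$; this is a standing assumption in the admissible balanced-pair setting. All other steps reduce to the familiar calculus of disk complexes and explicit contraction formulas.
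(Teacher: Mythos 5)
Your proof is correct and follows essentially the same route as the paper: split the cycle sequences of $X\in\widetilde{\mathcal{X}}_\mathcal{E}$ using right $\mathcal{X}$-acyclicity to get the disk decomposition, obtain $\mathrm{Id}_X\sim 0$ for $X\in\mathcal{E}\text{-}{\rm dg}\widetilde{\mathcal{X}}\cap\widetilde{\mathcal{E}}$ (the paper routes this through its Ext-vanishing statements, you apply the definition with $E=X$ directly, which is equivalent), and factor a null-homotopic $f$ through $\bigoplus_n{\rm D}^{n+1}X_n$ via exactly the maps $g_n=\left(\begin{smallmatrix}1\\ d_n^X\end{smallmatrix}\right)$ and $h_n=(d_{n+1}^Cs_n,\,s_{n-1})$ used in the paper. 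The closure of $\mathcal{X}$ under direct summands that you flag is likewise implicit in the paper's argument, so there is no gap.
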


\begin{proof}
(1) For any complex $X\in \widetilde{\mathcal{X}}_\mathcal{E}$, there are short exact sequences $0\rightarrow {\rm Z}_nX\rightarrow X_n\rightarrow {\rm Z}_{n-1}X\rightarrow 0$ in $\mathcal{E}$ with ${\rm Z}_nX \in \mathcal{X}$. These sequences are split, and then $X_n = {\rm Z}_nX \oplus {\rm Z}_{n-1}X$. Hence $X = \bigoplus {\rm D}^nX'_n$, where $X'_n = {\rm Z}_{n-1}X$.

It is clear that $\widetilde{\mathcal{X}}_\mathcal{E} \subseteq \mathcal{E}\text{-}{\rm dg}\widetilde{\mathcal{X}} \cap \widetilde{\mathcal{E}}$. For any complex $X\in \mathcal{E}\text{-}{\rm dg}\widetilde{\mathcal{X}} \cap \widetilde{\mathcal{E}}$, we infer from the definition of $\mathcal{E}\text{-}{\rm dg}\widetilde{\mathcal{X}}$ that ${\rm Id}_X\sim 0$, i.e., $X$ is a contractible complex.  Then, $X$ is of the form $\bigoplus {\rm D}^nX'_n$ for some $X'_n\in \mathcal{X}$, and hence $X\in \widetilde{\mathcal{X}}_\mathcal{E}$ holds by the above argument.

(2) The ``if'' part is clear since any complex in $\widetilde{\mathcal{X}}_\mathcal{E}$ is contractible. For the ``only if'', we observe that the cone of ${\rm Id}_X$ is the complex
$$X': =\bigoplus {\rm D}^{n+1}X_n = \cdots \rightarrow X_{n+1}\oplus X_n\rightarrow X_n\oplus X_{n-1}\rightarrow X_{n-1}\oplus X_{n-2}\rightarrow \cdots,$$
which lies in $\widetilde{\mathcal{X}}_\mathcal{E}$. Since $f$ is null homotopic, it factors through $X'$ in ${\bf K}(\mathcal{A})$, as required.
\end{proof}


\begin{proposition}\label{prop:(X,Ch)}
$(\widetilde{\mathcal{X}}_\mathcal{E}, {\rm Ch}(\mathcal{A}))$ is a complete cotorsion pair in ${\rm Ch}(\mathcal{A}, \mathcal{E})$.
\end{proposition}

\begin{proof}
It is clear that $(\widetilde{\mathcal{X}}_\mathcal{E})^\perp = {\rm Ch}(\mathcal{A})$ and  $\widetilde{\mathcal{X}}_\mathcal{E} \subseteq {^\perp{\rm Ch}(\mathcal{A})}$. Let $C\in {^\perp{\rm Ch}(\mathcal{A})}$. By Lemma \ref{lem:dgX-ap} we have a short exact sequence $0\rightarrow E\rightarrow X\rightarrow C\rightarrow 0$ in ${\rm Ch}(\mathcal{A}, \mathcal{E})$ for which $X\in \mathcal{E}\text{-}{\rm dg}\widetilde{\mathcal{X}}$ and $E \in \widetilde{\mathcal{E}}$. Note that the sequence is split, and then as a direct summand of $X$, $C\in \mathcal{E}\text{-}{\rm dg}\widetilde{\mathcal{X}}$. Since ${\rm Id}_C$ is null homotopic, by Lemma \ref{lem:Xe} it factors through a complex in $\widetilde{\mathcal{X}}_\mathcal{E}$, thus $C\in \widetilde{\mathcal{X}}_\mathcal{E}$. Hence, $\widetilde{\mathcal{X}}_\mathcal{E} \supseteq {^\perp{\rm Ch}(\mathcal{A})}$, and finally,  $(\widetilde{\mathcal{X}}_\mathcal{E}, {\rm Ch}(\mathcal{A}))$ is a cotorsion pair in ${\rm Ch}(\mathcal{A}, \mathcal{E})$.

Next, it suffices to prove that for any complex $C$,  there is a short exact sequence $0\rightarrow K\rightarrow X\rightarrow C\rightarrow 0$ in ${\rm Ch}(\mathcal{A}, \mathcal{E})$ for which $X\in \widetilde{\mathcal{X}}_{\mathcal{E}}$. For each term $C_n$ of $C$, there is a right $\mathcal{X}$-approximation $f_n: X_n\rightarrow C_n$, then we have the chain map
$$\xymatrix@C=20pt@R=20pt{ X =\ar[d]_{g} &\cdots \ar[r] & X_{n+1}\oplus X_{n}\ar[r]^{e_n}\ar[d]^{g_n} &X_{n} \oplus X_{n-1}\ar[r]^{e_{n-1}\ \ }\ar[d]^{g_{n-1}} &X_{n-1} \oplus X_{n-2}\ar[r]\ar[d]^{g_{n-2}} &\cdots \\
C = &\cdots \ar[r]&C_n \ar[r]^{d_n}\ar[r] &C_{n-1} \ar[r]^{d_{n-1}}\ar[r] &C_{n-2} \ar[r]\ar[r] & \cdots  }$$
with $e_n = \left(\begin{smallmatrix}0  & 1 \\ 0 &0 \end{smallmatrix}\right)$ and $g_n = \left(d_{n+1}f_{n+1}, f_n \right)$. Since each $f_n$ is admissible epic, so is $g_n$. Set $K$ to be ${\rm Ker}g$, we obtain the desired sequence.
\end{proof}

Recall that a chain map $f: C\rightarrow D$ is said to be an {\em $\mathcal{X}$-quasi-isomorphism} if the resulting chain map ${\rm Hom}_\mathcal{A}(X, f)$ is a quasi-isomorphism for any $X\in \mathcal{X}$. Let ${\bf K}(\mathcal{A})$ be the chain homotopy category of the abelian category $\mathcal{A}$. Denote by $S_\mathcal{X}$ the class of all right $\mathcal{X}$-quasi-isomorphisms in ${\bf K}(\mathcal{A})$. Note that $S_\mathcal{X}$ is a saturated multiplicative system corresponding to the subcategory $\widetilde{\mathcal{E}}$ of ${\bf K}(\mathcal{A})$ in the sense that a chain map $f: C\rightarrow D$ is an $\mathcal{X}$-quasi-isomorphism if and only if its mapping cone ${\rm Con}(f)$ is right $\mathcal{X}$-acyclic (i.e., ${\rm Con}(f)\in \widetilde{\mathcal{E}}$).

The {\em relative derived category}  ${\bf D}_\mathcal{X}(\mathcal{A})$ of $\mathcal{A}$ with respect to $\mathcal{X}$ is defined to be the Verdier quotient of ${\bf K}(\mathcal{A})$ modulo the subcategory $\widetilde{\mathcal{E}}$; see \cite[Definition 3.1]{Chen10}. In symbols, $${\bf D}_\mathcal{X}(\mathcal{A}):= {\bf K}(\mathcal{A})/ \widetilde{\mathcal{E}} = S_\mathcal{X}^{-1}{\bf K}(\mathcal{A}).$$
Note that the relative derived category ${\bf D}_\mathcal{X}(\mathcal{A})$ coincides with Neeman's derived category of the exact category $(\mathcal{A}, \mathcal{E})$ in \cite[Construction 1.5]{Nee90}.

Now, we are in a position to show the model for the relative derived category ${\bf D}_\mathcal{X}(\mathcal{A})$. Let ${\bf K}_{\mathcal{E}\text{-}{\rm dg}}(\mathcal{X})$ be the subcategory of ${\bf K}(\mathcal{A})$ whose objects are complexes in $\mathcal{E}\text{-}{\rm dg}\widetilde{\mathcal{X}}$. By Remark \ref{rem:1}, the following result generalizes \cite[Proposition 3.5]{Chen10}.

\begin{theorem}\label{thm:M4RDC}
There is a model structure $\mathcal{M}_{dg\mathcal{X}} = (\mathcal{E}\text{-}{\rm dg}\widetilde{\mathcal{X}}, \widetilde{\mathcal{E}}, {\rm Ch}(\mathcal{A}))$ on the exact category ${\rm Ch}(\mathcal{A}, \mathcal{E})$. Moreover, we have triangle-equivalences
$${\rm Ho}(\mathcal{M}_{dg\mathcal{X}}) \simeq {\bf K}_{\mathcal{E}\text{-}{\rm dg}}(\mathcal{X}) \simeq {\bf D}_\mathcal{X}(\mathcal{A}).$$
\end{theorem}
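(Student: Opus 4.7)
The plan is to apply the Hovey--Gillespie correspondence from Section 2 to produce the model structure, then identify the homotopy category with the Frobenius stable quotient of cofibrant-fibrant objects, and finally match this with ${\bf D}_\mathcal{X}(\mathcal{A})$ via a standard Verdier-quotient argument.

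First, I would verify that $(\mathcal{E}\text{-}{\rm dg}\widetilde{\mathcal{X}}, \widetilde{\mathcal{E}}, {\rm Ch}(\mathcal{A}))$ is a Hovey triple on the weakly idempotent complete exact category ${\rm Ch}(\mathcal{A}, \mathcal{E})$. The complete cotorsion pair $(\mathcal{E}\text{-}{\rm dg}\widetilde{\mathcal{X}}, \widetilde{\mathcal{E}})$ is exactly Proposition \ref{thm:(dgX,E)}. For the pair $(\mathcal{E}\text{-}{\rm dg}\widetilde{\mathcal{X}} \cap \widetilde{\mathcal{E}}, {\rm Ch}(\mathcal{A}))$, Lemma \ref{lem:Xe}(1) rewrites the left class as $\widetilde{\mathcal{X}}_\mathcal{E}$, and Proposition \ref{prop:(X,Ch)} then provides both the cotorsion pair and the special precovers witnessing completeness. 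Thickness of $\widetilde{\mathcal{E}}$ in ${\rm Ch}(\mathcal{A}, \mathcal{E})$ is the last ingredient: given a short exact sequence of complexes that is degreewise in $\mathcal{E}$, applying ${\rm Hom}_\mathcal{A}(X, -)$ for $X \in \mathcal{X}$ yields a short exact sequence of Hom-complexes, whose long exact homology sequence delivers the two-out-of-three property; closure under summands is immediate. Gillespie's theorem then produces $\mathcal{M}_{dg\mathcal{X}}$.

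Next I would identify the homotopy category. By the Frobenius description recalled in Section 2, $\mathrm{Ho}(\mathcal{M}_{dg\mathcal{X}}) \simeq \underline{\mathcal{A}_{cf}}$, where the cofibrant-fibrant class is $\mathcal{E}\text{-}{\rm dg}\widetilde{\mathcal{X}} \cap {\rm Ch}(\mathcal{A}) = \mathcal{E}\text{-}{\rm dg}\widetilde{\mathcal{X}}$ and the projective-injective class $\omega$ equals $\mathcal{E}\text{-}{\rm dg}\widetilde{\mathcal{X}} \cap \widetilde{\mathcal{E}} = \widetilde{\mathcal{X}}_\mathcal{E}$ by Lemma \ref{lem:Xe}(1). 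Lemma \ref{lem:Xe}(2) states that a chain map out of an object of $\mathcal{E}\text{-}{\rm dg}\widetilde{\mathcal{X}}$ factors through some complex in $\widetilde{\mathcal{X}}_\mathcal{E}$ if and only if it is null homotopic, so modding $\mathcal{E}\text{-}{\rm dg}\widetilde{\mathcal{X}}$ out by $\omega$ produces precisely the chain homotopy category ${\bf K}_{\mathcal{E}\text{-}{\rm dg}}(\mathcal{X})$.

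Finally, for ${\bf K}_{\mathcal{E}\text{-}{\rm dg}}(\mathcal{X}) \simeq {\bf D}_\mathcal{X}(\mathcal{A})$, consider the composite of the inclusion ${\bf K}_{\mathcal{E}\text{-}{\rm dg}}(\mathcal{X}) \hookrightarrow {\bf K}(\mathcal{A})$ with the Verdier quotient ${\bf K}(\mathcal{A}) \to {\bf K}(\mathcal{A})/\widetilde{\mathcal{E}} = {\bf D}_\mathcal{X}(\mathcal{A})$. Essential surjectivity follows from Lemma \ref{lem:dgX-ap}: for every complex $C$ there is an admissible short exact sequence $0 \to E' \to X' \to C \to 0$ with $X' \in \mathcal{E}\text{-}{\rm dg}\widetilde{\mathcal{X}}$ and $E' \in \widetilde{\mathcal{E}}$, giving a right $\mathcal{X}$-quasi-isomorphism $X' \to C$ in ${\bf D}_\mathcal{X}(\mathcal{A})$. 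Full faithfulness reduces, via the calculus of right fractions in $S_\mathcal{X}$, to the vanishing ${\rm Hom}_{{\bf K}(\mathcal{A})}(X, E) = 0$ for all $X \in \mathcal{E}\text{-}{\rm dg}\widetilde{\mathcal{X}}$ and $E \in \widetilde{\mathcal{E}}$; by Lemma \ref{lem:Gil04} this group identifies with ${\rm Ext}^1_{dw}(X, \Sigma^{-1}E) = {\rm Ext}^1_{{\rm Ch}(\mathcal{E})}(X, \Sigma^{-1}E)$, which vanishes by Proposition \ref{thm:(dgX,E)} since $\Sigma^{-1}E \in \widetilde{\mathcal{E}}$. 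The step I expect to require the most care is this last fully-faithful verification, namely cleanly checking that $\mathrm{Ext}^1_{dw}$ and $\mathrm{Ext}^1_{{\rm Ch}(\mathcal{E})}$ coincide in the relevant situation and that the fractions-calculus argument goes through inside the Verdier quotient ${\bf K}(\mathcal{A})/\widetilde{\mathcal{E}}$; everything else is essentially an assembly of the results established earlier in this section.
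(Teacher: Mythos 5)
Your proposal is correct and follows essentially the same route as the paper: assemble the Hovey triple from Propositions \ref{thm:(dgX,E)} and \ref{prop:(X,Ch)} together with Lemma \ref{lem:Xe}, identify $\mathrm{Ho}(\mathcal{M}_{dg\mathcal{X}})$ with ${\bf K}_{\mathcal{E}\text{-}{\rm dg}}(\mathcal{X})$ via the Frobenius stable category, and establish the equivalence with ${\bf D}_\mathcal{X}(\mathcal{A})$ by density (Lemma \ref{lem:dgX-ap}) plus full faithfulness from the orthogonality ${\rm Hom}_{{\bf K}(\mathcal{A})}(X,E)=0$ for $X\in\mathcal{E}\text{-}{\rm dg}\widetilde{\mathcal{X}}$, $E\in\widetilde{\mathcal{E}}$. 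Your packaging of the last step as a standard localization lemma is just a mild reorganization of the paper's explicit right-fraction computation, and your "hard step" (identifying ${\rm Ext}^1_{dw}$ with ${\rm Ext}^1_{{\rm Ch}(\mathcal{E})}$ on degreewise-$\mathcal{X}$ complexes) is the same observation already used in the proof of Proposition \ref{prop:(dgX,E)}.
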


\begin{proof}
It is clear that $\widetilde{\mathcal{E}}$ is a thick subcategory. Combining Propositions \ref{thm:(dgX,E)} and \ref{prop:(X,Ch)} with Lemma \ref{lem:Xe}, we deduce the model structure $\mathcal{M}_{dg\mathcal{X}}$  follows directly from the correspondence between model structure and complete cotorsion pairs in exact categories; see \cite[Theorem 3.3]{Gil11} for details.

By a fundamental result on model categories (see, for example, \cite[Theorem 1.2.10]{Hov99}), we have ${\rm Ho}(\mathcal{M}_{dg\mathcal{X}}) \simeq \mathcal{E}\text{-}{\rm dg}\widetilde{\mathcal{X}}/ \widetilde{\mathcal{X}}_\mathcal{E}$. By Lemma \ref{lem:Xe}(2), we infer that $\mathcal{E}\text{-}{\rm dg}\widetilde{\mathcal{X}}/ \widetilde{\mathcal{X}}_\mathcal{E} = {\bf K}_{\mathcal{E}\text{-}{\rm dg}}(\mathcal{X})$. It remains to prove ${\bf K}_{\mathcal{E}\text{-}{\rm dg}}(\mathcal{X}) \simeq {\bf D}_\mathcal{X}(\mathcal{A})$.

There is a natural composite functor $F: {\bf K}_{\mathcal{E}\text{-}{\rm dg}}(\mathcal{X})\hookrightarrow {\bf K}(\mathcal{A}) \rightarrow {\bf D}_\mathcal{X}(\mathcal{A})$, where ${\bf K}(\mathcal{A}) \rightarrow {\bf D}_\mathcal{X}(\mathcal{A})$ is the quotient functor. It is clear that $F$ is a triangle functor, and it follows immediately from Lemma \ref{lem:dgX-ap} that the functor $F$ is dense.

We claim that for any complex $X\in \mathcal{E}\text{-}{\rm dg}\widetilde{\mathcal{X}}$ and any $\mathcal{X}$-quasi-isomorphism $g: M\rightarrow X$, there is a chain map $h: X\rightarrow M$ such that $gh\sim {\rm Id}_X$. For any $\mathcal{X}$-quasi-isomorphism $f: M\rightarrow N$, it is clear that ${\rm Con}(f) \in \widetilde{\mathcal{E}}$. Then the Hom-complex ${\rm Hom}_{\mathcal{A}}(X, {\rm Con}(f)) \cong {\rm Con}({\rm Hom}_{\mathcal{A}}(X, f))$ is acyclic. This yields a quasi-isomorphism ${\rm Hom}_{\mathcal{A}}(X, f)$, and moreover, an isomorphism of abelian groups ${\rm Hom}_{{\bf K}(\mathcal{A})}(X, M)\cong {\rm Hom}_{{\bf K}(\mathcal{A})}(X, N)$. Especially, let $N = X$ and we consider the isomorphism induced by the $\mathcal{X}$-quasi-isomorphism $g: M\rightarrow X$. Then for ${\rm Id}_X \in {\rm Hom}_{{\bf K}(\mathcal{A})}(X, X)$, there exists a map $h\in {\rm Hom}_{{\bf K}(\mathcal{A})}(X, M)$ such that $gh = {\rm Id}_X$ in ${\rm Hom}_{{\bf K}(\mathcal{A})}(X, X)$, i.e., $gh\sim {\rm Id}_X$.

By the above claim, we can follow a standard argument to prove that for any $X\in \mathcal{E}\text{-}{\rm dg}\widetilde{\mathcal{X}}$ and any complex $C$, the natural map $\varphi: {\rm Hom}_{{\bf K}(\mathcal{A})}(X, C)\rightarrow {\rm Hom}_{{\bf D}_\mathcal{X}(\mathcal{A})}(X, C)$ given by $f\rightarrow f/{\rm Id}_X$ is an isomorphism, where the right fraction $f/{\rm Id}_X$ denotes a map in ${\bf D}_\mathcal{X}(\mathcal{A})$. Indeed, if $\varphi(f) = 0$, then there is an $\mathcal{X}$-quasi-isomorphism $g: M\rightarrow X$ such that $fg \sim 0$. Moreover, we infer from the above argument that there is a map $h: X\rightarrow M$ such that $gh\sim {\rm Id}_X$. Then $f\sim f{\rm Id}_X\sim fgh \sim 0$, and hence $\varphi$ is a monomorphism. Any right fraction $\alpha/g \in {\rm Hom}_{{\bf D}_\mathcal{X}(\mathcal{A})}(X, C)$ can be represented by a diagram $X\stackrel{g}\longleftarrow M \stackrel{\alpha}\longrightarrow C$, where $g$ is an $\mathcal{X}$-quasi-isomorphism. There is an $\mathcal{X}$-quasi-isomorphism $h: X\rightarrow M$ such that $gh\sim {\rm Id}_X$, and moreover, we have $\alpha/g = \alpha h/{\rm Id}_X = \varphi(\alpha h)$. This implies that $\varphi$ is an epimorphism.

In particular, for any complexes $X, X'\in \mathcal{E}\text{-}{\rm dg}\widetilde{\mathcal{X}}$, we have an isomorphism induced by the functor $F$:
$${\rm Hom}_{{\bf K}_{\mathcal{E}\text{-}{\rm dg}}(\mathcal{X})}(X, X') = {\rm Hom}_{{\bf K}(\mathcal{A})}(X, X') \cong {\rm Hom}_{{\bf D}_\mathcal{X}(\mathcal{A})}(X, X').$$ Hence, the functor $F:  {\bf K}_{\mathcal{E}\text{-}{\rm dg}}(\mathcal{X})\rightarrow {\bf D}_\mathcal{X}(\mathcal{A})$ is fully faithful, and consequently, it is an equivalence. This completes the proof.
\end{proof}

\begin{remark}\label{rem:M4DY}
Dually, we have the notions of left $\mathcal{Y}$-quasi-isomorphism and relative derived category ${\bf D}_\mathcal{Y}(\mathcal{A})$. It can be shown that there is also a model structure, denoted by $\mathcal{M}_{dg\mathcal{Y}} = ({\rm Ch}(\mathcal{A}), \widetilde{\mathcal{E}}, \mathcal{E}\text{-}{\rm dg}\widetilde{\mathcal{Y}})$, on the exact category ${\rm Ch}(\mathcal{A}, \mathcal{E})$. Moreover, we have triangle-equivalences
${\rm Ho}(\mathcal{M}_{dg\mathcal{Y}}) \simeq {\bf K}_{\mathcal{E}\text{-}{\rm dg}}(\mathcal{Y}) \simeq {\bf D}_\mathcal{Y}(\mathcal{A})$.
\end{remark}

\begin{remark}\label{rem:1}
Chen proved that under an additional assumption $\mathcal{X}{\text-}{\rm dim}\mathcal{A} < \infty$, the natural composite functor ${\bf K}(\mathcal{X})\hookrightarrow {\bf K}(\mathcal{A}) \rightarrow {\bf D}_\mathcal{X}(\mathcal{A})$ is a triangle-equivalence, where ${\bf K}(\mathcal{X})$ is the chain homotopy category of complexes with all items in $\mathcal{X}$; see \cite[Proposition 3.5]{Chen10}. It is clear that ${\bf K}_{\mathcal{E}\text{-}{\rm dg}}(\mathcal{X})\subseteq {\bf K}(\mathcal{X})$. Assume $\mathcal{X}{\text-}{\rm dim}\mathcal{A} < \infty$. Similar to an argument in \cite[Proposition 5.2]{Ren19}, one has  ${\bf K}_{\mathcal{E}\text{-}{\rm dg}}(\mathcal{X})\supseteq {\bf K}(\mathcal{X})$. Indeed, for any $X\in {\bf K}(\mathcal{X})$, by Lemma \ref{lem:dgX-ap} we have an $\mathcal{X}$-quasi-isomorphism $f: X'\rightarrow X$ with $X'\in {\bf K}_{\mathcal{E}\text{-}{\rm dg}}(\mathcal{X})$. Then ${\rm Con}(f)$ is in both $\widetilde{\mathcal{E}}$ and ${\bf K}(\mathcal{X})$. Since $\mathcal{X}{\text-}{\rm dim}\mathcal{A} < \infty$, one has ${\rm Z}_n{\rm Con}(f) \in \mathcal{X}$ for any $n\in \mathbb{Z}$. Then ${\rm Con}(f)\in \widetilde{\mathcal{X}}_\mathcal{E}$, and moreover, by the short exact sequence $0\rightarrow X\rightarrow {\rm Con}(f)\rightarrow \Sigma X'\rightarrow 0$ we deduce that $X\in {\bf K}_{\mathcal{E}\text{-}{\rm dg}}(\mathcal{X})$.
\end{remark}


\section{ \bf Models for chain homotopy categories}

In this section, we intend to find model structures to realize the chain homotopy categories ${\bf K}(\mathcal{X})$ and ${\bf K}(\mathcal{Y})$; see Theorem \ref{thm:M5}.

Let $\mathcal{E}\text{-}{\rm dw}\widetilde{\mathcal{X}}$ be the class of complexes $X$ for which each item $X_n\in \mathcal{X}$. The prefix ``$\mathcal{E}$'' is used to indicate that we consider the right orthogonal $(\mathcal{E}\text{-}{\rm dw}\widetilde{\mathcal{X}})^{\perp}$  with respect to ${\rm Ext}^1_{{\rm Ch}(\mathcal{E})}(-, -)$. Note that $(\mathcal{E}\text{-}{\rm dw}\widetilde{\mathcal{X}})^{\perp}\subseteq \widetilde{\mathcal{E}}$ by Proposition \ref{thm:(dgX,E)}. Let $\mathcal{E}\text{-}{\rm ac}\widetilde{\mathcal{X}} = \mathcal{E}\text{-}{\rm dw}\widetilde{\mathcal{X}} \cap \widetilde{\mathcal{E}}$ be the class of complexes $X$ which are right $\mathcal{X}$-acyclic with each item $X_n\in \mathcal{X}$.

By the proof of Lemma \ref{lem:Xe}, every complex $X$ in $\widetilde{\mathcal{X}}_\mathcal{E}$ is of the form $\bigoplus {\rm D}^nX'_n$ with $X'_n\in \mathcal{X}$, which implies $\widetilde{\mathcal{X}}_\mathcal{E} \subseteq (\mathcal{E}\text{-}{\rm dw}\widetilde{\mathcal{X}})^{\perp}$.

\begin{proposition}\label{prop:(dwX)}
$(\mathcal{E}\text{-}{\rm dw}\widetilde{\mathcal{X}}, (\mathcal{E}\text{-}{\rm dw}\widetilde{\mathcal{X}})^{\perp})$ is a hereditary cotorsion pair in ${\rm Ch}(\mathcal{A}, \mathcal{E})$.
\end{proposition}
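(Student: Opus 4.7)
The plan is to prove this in two stages: first establishing the cotorsion pair, then its hereditariness. The inclusion $\mathcal{E}\text{-}{\rm dw}\widetilde{\mathcal{X}} \subseteq {}^{\perp}((\mathcal{E}\text{-}{\rm dw}\widetilde{\mathcal{X}})^{\perp})$ is immediate from the definition of the right orthogonal, so all the substantive work lies in the reverse inclusion. To handle it I would run the same pullback--splicing procedure that appears in the second half of the proof of Proposition~\ref{prop:(dgX,E)}: given $C \in {}^{\perp}((\mathcal{E}\text{-}{\rm dw}\widetilde{\mathcal{X}})^{\perp})$ and a fixed degree $n$, choose a right $\mathcal{X}$-approximation $p\colon X\to C_n$ with kernel $K$ (so $0\to K\to X\to C_n\to 0$ lies in $\mathcal{E}$), form the pullback of $p$ and $d_{n+1}\colon C_{n+1}\to C_n$, and splice to produce a complex $D$ that agrees with $C$ outside degrees $n$ and $n+1$, together with an admissible short exact sequence
$$0\longrightarrow {\rm D}^{n+1}K\longrightarrow D\longrightarrow C\longrightarrow 0$$
whose degree-$n$ component recovers the chosen approximation.

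The main obstacle, and the place where the argument diverges from that of Proposition~\ref{prop:(dgX,E)}, is to establish ${\rm D}^{n+1}K\in (\mathcal{E}\text{-}{\rm dw}\widetilde{\mathcal{X}})^{\perp}$: one cannot simply invoke ${\rm D}^{n+1}K\in \widetilde{\mathcal{E}}$, since $(\mathcal{E}\text{-}{\rm dw}\widetilde{\mathcal{X}})^{\perp}$ is in general strictly smaller than $\widetilde{\mathcal{E}}$. The plan is to exploit the Hom-adjunction ${\rm Hom}_{{\rm Ch}(\mathcal{A})}(Y,{\rm D}^{n+1}K)\cong {\rm Hom}_{\mathcal{A}}(Y_n,K)$ (standard among the isomorphisms of \cite[Lemma 3.1]{Gil04}) together with the short exact sequence $0\to {\rm S}^{n}K\to {\rm D}^{n+1}K\to {\rm S}^{n+1}K\to 0$ to transfer the Ext computation to the base category, producing an isomorphism ${\rm Ext}^1_{{\rm Ch}(\mathcal{E})}(Y,{\rm D}^{n+1}K)\cong {\rm Ext}^1_{\mathcal{E}}(Y_n,K)$ for any complex $Y$. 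For $Y\in \mathcal{E}\text{-}{\rm dw}\widetilde{\mathcal{X}}$ the object $Y_n$ is in $\mathcal{X}$, and any extension $0\to K\to L\to Y_n\to 0$ in $\mathcal{E}$ is by definition right $\mathcal{X}$-acyclic, so applying ${\rm Hom}_{\mathcal{A}}(Y_n,-)$ lifts $1_{Y_n}$ and splits the sequence. Hence ${\rm Ext}^1_{\mathcal{E}}(Y_n,K)=0$, and so ${\rm D}^{n+1}K$ does lie in $(\mathcal{E}\text{-}{\rm dw}\widetilde{\mathcal{X}})^{\perp}$.

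With this in hand the hypothesis on $C$ forces the displayed sequence to split, so its degree-$n$ component $0\to K\to X\to C_n\to 0$ splits and $C_n$ becomes a direct summand of $X\in \mathcal{X}$. Summand-closedness of $\mathcal{X}$ then yields $C_n\in \mathcal{X}$, and since $n$ is arbitrary $C\in \mathcal{E}\text{-}{\rm dw}\widetilde{\mathcal{X}}$. Hereditariness falls out of the same mechanism: given an admissible short exact sequence $0\to A\to B\to C\to 0$ in ${\rm Ch}(\mathcal{A},\mathcal{E})$ with $B,C\in \mathcal{E}\text{-}{\rm dw}\widetilde{\mathcal{X}}$, each degreewise sequence $0\to A_n\to B_n\to C_n\to 0$ lies in $\mathcal{E}$ with $C_n\in \mathcal{X}$ and therefore splits, making $A_n$ a direct summand of $B_n\in \mathcal{X}$, hence itself in $\mathcal{X}$. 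Thus $A\in \mathcal{E}\text{-}{\rm dw}\widetilde{\mathcal{X}}$, completing the argument.
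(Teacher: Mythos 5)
Your argument is correct and is essentially the paper's: both hinge on the isomorphism ${\rm Ext}^1_{{\rm Ch}(\mathcal{E})}(Y,{\rm D}^{n+1}A)\cong {\rm Ext}^1_{\mathcal{E}}(Y_n,A)$ together with the fact that every sequence in $\mathcal{E}$ ending in an object of $\mathcal{X}$ splits, which puts the disk complexes into $(\mathcal{E}\text{-}{\rm dw}\widetilde{\mathcal{X}})^{\perp}$ and forces the components of any $C\in{}^{\perp}((\mathcal{E}\text{-}{\rm dw}\widetilde{\mathcal{X}})^{\perp})$ to be summands of objects of $\mathcal{X}$. The paper merely packages this more directly (applying the isomorphism to ${\rm D}^{n+1}A$ for arbitrary $A$ rather than rerunning the pullback--splicing construction of Proposition~\ref{prop:(dgX,E)}), and its hereditariness argument is the same degreewise splitting you give.
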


\begin{proof}
Let $\widehat{\mathcal{S}}$ be the collection of all complexes $C$ satisfying that any chain map $X\rightarrow C$ from complex $X\in \mathcal{E}\text{-}{\rm dw}\widetilde{\mathcal{X}}$ to $C$ is null homotopic. Then, for any $X\in \mathcal{E}\text{-}{\rm dw}\widetilde{\mathcal{X}}$ and any $C\in \widehat{\mathcal{S}}$, we have
$${\rm Ext}^1_{{\rm Ch}(\mathcal{E})}(X, C) = {\rm Ext}^1_{dw}(X, C)\cong {\rm Hom}_{{\rm Ch}(\mathcal{A})}(X, \Sigma C)/\sim = 0.$$
This implies that $(\mathcal{E}\text{-}{\rm dw}\widetilde{\mathcal{X}})^{\perp} = \widehat{\mathcal{S}}$ and $\mathcal{E}\text{-}{\rm dw}\widetilde{\mathcal{X}} \subseteq {^{\perp}\widehat{\mathcal{S}}}$.

It is clear that for any $A\in \mathcal{A}$, the complex ${\rm D}^nA$ is contractible, and is in $\widehat{\mathcal{S}}$. Analogous to \cite[Lemma 3.1(6)]{Gil04}, one can prove that for any complex $X$, there is a natural isomorphism
${\rm Ext}^1_{{\rm Ch}(\mathcal{E})}(X, {\rm D}^{n+1}A) \cong {\rm Ext}^1_{\mathcal{E}}(X_n, A)$. Hence, for any $X\in {^{\perp}\widehat{\mathcal{S}}}$, we have ${\rm Ext}^1_{\mathcal{E}}(X_n, A) = 0$. By Lemma \ref{admissiblebalancedpairs}(2) this implies $X_n\in \mathcal{X}$ for each $n$, and then $X\in \mathcal{E}\text{-}{\rm dw}\widetilde{\mathcal{X}}$. Hence, $(\mathcal{E}\text{-}{\rm dw}\widetilde{\mathcal{X}}, (\mathcal{E}\text{-}{\rm dw}\widetilde{\mathcal{X}})^{\perp})$ is a cotorsion pair in ${\rm Ch}(\mathcal{A}, \mathcal{E})$.

The heredity is easy to check from the fact that $\mathcal{X}$ is closed under taking kernels of admissible epimorphisms.
\end{proof}

In order to show that the cotorsion pair $(\mathcal{E}\text{-}{\rm dw}\widetilde{\mathcal{X}}, (\mathcal{E}\text{-}{\rm dw}\widetilde{\mathcal{X}})^{\perp})$ is complete, we need the following facts.


\begin{lemma}\label{pushoutinexactcategory}
Consider a commutative square in an exact category $(\mathcal{A}, \mathcal{E})$
$$\xymatrix@C=22pt@R=20pt{ A\ \ar@{>->}[r]^{i} \ar[d]_{f} &B \ar[d]^{f'} \\
A'\ \ar@{>->}[r]^{i'} &B'
  }$$
in which $i$ and $i'$ are admissible monomorphisms. Then the square is a pushout if and only if the sequence
$$A\xrightarrow{\ \tiny\begin{pmatrix}i\\-f\end{pmatrix}\ }B\oplus A'\xrightarrow{\tiny\begin{pmatrix}f'&i'\end{pmatrix}} B'$$is in $\mathcal{E}$.
\end{lemma}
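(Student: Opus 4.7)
The plan is to prove both implications directly from the exact-category axioms, largely following Bühler's template for bicartesian squares in \cite{Buh10}.

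For the ``if'' direction, I would reinterpret short exactness of the sequence as saying that $(f', i') : B \oplus A' \to B'$ is a cokernel of $\binom{i}{-f}$. The pushout property of the square then falls out of the universal property of the cokernel: given morphisms $g : B \to X$ and $h : A' \to X$ with $gi = hf$, the combined map $(g, h) : B \oplus A' \to X$ composes with $\binom{i}{-f}$ to give $gi - hf = 0$, and hence factors uniquely through $(f', i')$, supplying the required unique map $k : B' \to X$ with $kf' = g$ and $ki' = h$.

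For the converse, the core step is to establish that $\binom{i}{-f} : A \to B \oplus A'$ is an admissible monomorphism. I plan to factor it as
$$A \xrightarrow{\binom{1}{-f}} A \oplus A' \xrightarrow{\ i \oplus 1_{A'}\ } B \oplus A',$$
in which the first map is split monic (retraction: the first projection) and so is admissible in any exact category, while the second is the direct sum of the admissible monomorphism $i$ with $1_{A'}$ and is therefore also admissible; their composition is thus admissible. The equality $(f', i')\binom{i}{-f} = f'i - i'f = 0$ holds by commutativity of the square, and the universal property of the pushout, applied as in the preceding paragraph, identifies $(f', i')$ as the cokernel of $\binom{i}{-f}$. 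This exhibits the sequence as a kernel--cokernel pair coming from an admissible monomorphism, so it belongs to $\mathcal{E}$.

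The main obstacle will be the admissibility of $\binom{i}{-f}$: in an abelian ambient category this would be automatic, but in a general exact category one must produce an honest factorization through admissible maps and invoke closure of admissible monomorphisms under direct sums, split inclusions, and compositions. Once admissibility is in hand, both halves of the argument collapse to routine universal-property bookkeeping.
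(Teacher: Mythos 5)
Your proof is correct, and it is essentially the standard argument for this fact (Proposition 2.12 in B\"uhler's \emph{Exact Categories}), which the paper itself states without proof. One small point of care in the ``only if'' direction: it is not true in an arbitrary exact category that every split monomorphism is admissible (that requires weak idempotent completeness), so the justification for the first factor $\binom{1}{-f}\colon A\to A\oplus A'$ should instead be that the sequence $A\to A\oplus A'\xrightarrow{(f,\,1)}A'$ is carried by the automorphism $\left(\begin{smallmatrix}1&0\\ f&1\end{smallmatrix}\right)$ of $A\oplus A'$ onto the split direct-sum sequence, which lies in $\mathcal{E}$; with that repair, the rest of your argument (admissibility of the composite and the cokernel identification via the pushout's universal property) goes through exactly as you describe.
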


\begin{lemma}\label{dwboundedisdg}
For any bounded below complex $X \in \mathcal{E}\text{-}{\rm dw}\widetilde{\mathcal{X}}$ , one has $X\in \mathcal{E}\text{-}{\rm dg}\widetilde{\mathcal{X}}$.
\end{lemma}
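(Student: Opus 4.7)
The plan is to build a chain homotopy $\{s_n: X_n \to E_{n+1}\}$ between any chain map $f: X \to E$ (with $E \in \widetilde{\mathcal{E}}$) and the zero map, by induction on degree, using the bounded-below hypothesis to anchor the recursion. Concretely, pick $N$ with $X_n = 0$ for $n < N$, and set $s_n = 0$ for $n \leq N-1$. I will then produce $s_N, s_{N+1}, s_{N+2},\ldots$ successively.

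The inductive step is the heart of the argument. Assuming $s_{N-1},\ldots,s_{n-1}$ have been constructed so that $f_k = d^E_{k+1}s_k + s_{k-1}d^X_k$ for all $k \leq n-1$, define $g_n := f_n - s_{n-1}d^X_n : X_n \to E_n$. A direct computation, using that $f$ is a chain map and the inductive relation $d^E_n s_{n-1} = f_{n-1} - s_{n-2}d^X_{n-1}$, yields
\[
d^E_n g_n \;=\; f_{n-1}d^X_n - (f_{n-1} - s_{n-2}d^X_{n-1})d^X_n \;=\; s_{n-2}d^X_{n-1}d^X_n \;=\; 0.
\]
Now comes the key use of the hypotheses: since $X_n \in \mathcal{X}$ and $E \in \widetilde{\mathcal{E}}$ is right $\mathcal{X}$-acyclic, the Hom-complex $\mathrm{Hom}_{\mathcal{A}}(X_n, E)$ is acyclic. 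Viewing $g_n$ as a cycle in this Hom-complex (because $d^E_n g_n = 0$), acyclicity produces $s_n : X_n \to E_{n+1}$ with $d^E_{n+1}s_n = g_n$, i.e.\ $f_n = d^E_{n+1}s_n + s_{n-1}d^X_n$. This closes the induction, so $f \sim 0$, and since additionally every $X_n \in \mathcal{X}$, we obtain $X \in \mathcal{E}\text{-}{\rm dg}\widetilde{\mathcal{X}}$ from the definition.

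I do not expect any serious obstacle: the bounded-below hypothesis provides the base case so the recursion can be initiated in finite degree, and the lifting at each step is a one-line consequence of the definition of right $\mathcal{X}$-acyclicity applied to $X_n \in \mathcal{X}$. The only care required is to verify, via the inductive relations for $s_{n-1}$ and $s_{n-2}$, that the corrected map $g_n = f_n - s_{n-1}d^X_n$ really is a cycle in $\mathrm{Hom}_{\mathcal{A}}(X_n, E)$, which is the standard manipulation recalled above.
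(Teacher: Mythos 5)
Your proof is correct and follows essentially the same route as the paper's: anchor the homotopy at the bound by setting $s_n=0$ in low degrees, then inductively correct $f_n$ to $g_n=f_n-s_{n-1}d^X_n$, check it is a cycle, and lift it using the right $\mathcal{X}$-acyclicity of $E$ applied to $X_n\in\mathcal{X}$. The only cosmetic difference is that the paper factors the corrected map through ${\rm Z}_{n-1}E$ and lifts along the admissible epimorphism $E_n\twoheadrightarrow {\rm Z}_{n-1}E$, whereas you phrase the same lifting as exactness of the complex ${\rm Hom}_{\mathcal{A}}(X_n,E)$ of abelian groups.
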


\begin{proof}
Without loss of generality, we may assume that $(X, d) = \cdots \rightarrow X_1\rightarrow X_0\rightarrow 0$, and a chain map $f$ from $(X, d)$ to $(E, e)$
$$\xymatrix@C=22pt@R=20pt{ X =\ar[d]_{f} &\cdots \ar[r] & X_{n}\ar[r]^{d_n\ \ }\ar[d]^{f_n} &X_{n-1}\ar[r]^{d_{n-1}}\ar[d]^{f_{n-1}} &X_{n-2}\ar[r]\ar[d]^{f_{n-2}} &\cdots \ar[r] & X_0 \ar[r]^{d_0} \ar[d]^{f_0} & 0 \ar[d] \ar[r] &\cdots\\
E = &\cdots \ar[r]&E_n \ar[r]^{e_n\ \ }\ar[r] &E_{n-1} \ar[r]^{e_{n-1}}\ar[r] &E_{n-2} \ar[r]\ar[r] & \cdots \ar[r] & E_0 \ar[r]^{e_0} &E_{-1} \ar[r] &\cdots }$$
with $E\in \widetilde{\mathcal{E}}$. We need to construct the chain homotopy $\{s_i: X_i\rightarrow E_{i+1}\}$ by setting $s_i =0$ for $i\leq -1$ and induction. Now we assume that $s_i$ is constructed for $i\leq n-2$.

Let  $f'_{n-1} = f_{n-1}-s_{n-2}d_{n-1}$. Then $e_{n-1}f'_{n-1} = 0$, thus $X_{n-1}\stackrel{f'_{n-1}} \rightarrow E_{n-1}$ factor through ${\rm Z}_{n-1}E$. Since $0\rightarrow {\rm Z}_{n}E\rightarrow E_{n}\rightarrow {\rm Z}_{n-1}E\rightarrow 0$ is right $\mathcal{X}$-acyclic, we have
$$\xymatrix{  & & &X_{n-1}\ar[d] \ar@{-->}[dl]_{s_{n-1}} &\\
0 \ar[r] &{\rm Z}_{n}E\ar[r] & E_{n}\ar[r] & {\rm Z}_{n-1}E \ar[r] &0, }$$
which implies $e_ns_{n-1} = f'_{n-1} =  f_{n-1}-s_{n-2}d_{n-1}$ and thus the chain map $f$ is null homotopic.
\end{proof}

\begin{lemma}\label{limit}
Let $K^{0}\stackrel{i^{0}}\longrightarrow K^{1}\stackrel{i^{1}}\longrightarrow K^{2}\longrightarrow \cdots$ be a direct system in ${\rm Ch}(\mathcal{A}, \mathcal{E})$ with each morphism being admissible monic and $K =\varinjlim_{n\geq 0} K^{n}$. If each $K^{n}$ is right $\mathcal{X}$-acyclic, then $K$ is, as well.

\end{lemma}

\begin{proof}
From the assumption we have the degree-wise right $\mathcal{X}$-acyclic (equivalently, degree-wise left $\mathcal{Y}$-acyclic) sequences $0\rightarrow K^{n}\rightarrow K^{n+1}\rightarrow K^{n+1}/K^{n}\rightarrow 0$. Then for all $Y \in \mathcal{Y}$ we have an inverse system of epimorphisms
$$\cdots \rightarrow {\rm Hom}_{\mathcal{A}}( K^{2}, Y)\rightarrow {\rm Hom}_{\mathcal{A}}( K^{1}, Y)\rightarrow {\rm Hom}_{\mathcal{A}}( K^{0},Y).$$
Since $K^{n}$ is left $\mathcal{Y}$-acyclic for all $n$, it follows from \cite[Theorem 1.6.13]{EJ00} that
${\rm Hom}_{\mathcal{A}}( K, Y) \cong \varprojlim_{n\geq 0} {\rm Hom}_{\mathcal{A}}( K^{n}, Y)$ is acyclic, as required.
\end{proof}

\begin{lemma}\label{lem:dwX-ap4>0}
For any bounded below complex $C$, there is a short exact sequence $0\rightarrow K\rightarrow X\rightarrow C\rightarrow 0$ in ${\rm Ch}(\mathcal{A}, \mathcal{E})$, where $X\in \mathcal{E}\text{-}{\rm dw}\widetilde{\mathcal{X}}$, $K\in (\mathcal{E}\text{-}{\rm dw}\widetilde{\mathcal{X}})^{\perp}$ and both $X$ and $K$ are bounded below.
\end{lemma}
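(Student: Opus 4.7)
My plan is to invoke Lemma \ref{lem:dgX-ap} while tracking the bounded-below property through its construction, and then strengthen the membership $E'\in\widetilde{\mathcal{E}}$ to $E'\in(\mathcal{E}\text{-}{\rm dw}\widetilde{\mathcal{X}})^{\perp}$. Assume $C_n=0$ for $n<N_0$. In the killing-cycles procedure of Lemma \ref{lem:l-E-ap}, each step $i$ only adds new summands at degree $n_i+1$, so by restricting the auxiliary sequence $n_0,n_1,n_2,\ldots$ to values $\geq N_0$, the intermediate complexes $C^i$ and $L^i={\rm Coker}(C\to C^i)$ remain zero in degrees below $N_0$, and hence so do their direct limits $E$ and $X$. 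When Lemma \ref{lem:r-X-ap} is then applied to the bounded-below $E\in\widetilde{\mathcal{E}}$, we have ${\rm Z}_nE=0$ for $n<N_0$, so the right $\mathcal{X}$-approximations $X'_n\to{\rm Z}_nE$ may be chosen to vanish in those degrees, whence $X''=\bigoplus {\rm D}^{n+1}X'_n\in\widetilde{\mathcal{X}}_{\mathcal{E}}$ is bounded below. The pullback in the proof of Lemma \ref{lem:dgX-ap} is computed degreewise, so it preserves boundedness, and yields a short exact sequence $0\to E'\to X'\to C\to 0$ in ${\rm Ch}(\mathcal{A},\mathcal{E})$ with $X'\in\mathcal{E}\text{-}{\rm dg}\widetilde{\mathcal{X}}\subseteq\mathcal{E}\text{-}{\rm dw}\widetilde{\mathcal{X}}$ and $E'\in\widetilde{\mathcal{E}}$ both bounded below.

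The remaining task is to upgrade $E'\in\widetilde{\mathcal{E}}$ to $E'\in(\mathcal{E}\text{-}{\rm dw}\widetilde{\mathcal{X}})^{\perp}$. By the description of this orthogonal class established in the proof of Proposition \ref{prop:(dwX)}, I need to null-homotope every chain map $f\colon Y\to E'$ from a complex $Y\in\mathcal{E}\text{-}{\rm dw}\widetilde{\mathcal{X}}$. I would construct the homotopy $\{s_n\colon Y_n\to E'_{n+1}\}$ by induction on $n$, with base case $s_n=0$ for $n+1<N_0$ (forced, since $E'_{n+1}=0$ and $f_n=0$ there); at the inductive step, $f_n-s_{n-1}d^Y_n$ is killed by the differential of $E'$, hence factors through ${\rm Z}_nE'\hookrightarrow E'_n$, and this factoring lifts to $s_n$ along the right $\mathcal{X}$-acyclic short exact sequence $0\to{\rm Z}_{n+1}E'\to E'_{n+1}\to{\rm Z}_nE'\to 0$ using that $Y_n\in\mathcal{X}$. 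This is the induction scheme of Lemma \ref{dwboundedisdg}, applied with the bounded-below hypothesis on the target rather than on the source.

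Setting $X:=X'$ and $K:=E'$ then delivers the desired short exact sequence. The principal obstacle is the bookkeeping required to ensure that every pullback, direct limit and approximation in Lemmas \ref{lem:l-E-ap}--\ref{lem:dgX-ap} simultaneously respects the lower bound $N_0$; once this is in place, the closing homotopy induction is routine.
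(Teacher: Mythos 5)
Your proposal is correct, but it takes a genuinely different route from the paper. The paper proves this lemma from scratch by induction on the brutal truncations $C_{[n,0]}$: at each stage it lifts the augmentation of an $\mathcal{X}$-resolution $P$ of $C_{n+1}$ to a chain map $f\colon P\to X^{(n)}$, sets $X^{(n+1)}={\rm Con}(f)$, verifies $K^{(n+1)}\in(\mathcal{E}\text{-}{\rm dw}\widetilde{\mathcal{X}})^{\perp}$ via \cite[Lemma 2.4]{CFH06}, and passes to the direct limit using Lemma \ref{limit}. You instead recycle Lemma \ref{lem:dgX-ap}, checking that the killing-cycles construction of Lemma \ref{lem:l-E-ap}, the approximation of Lemma \ref{lem:r-X-ap} and the degreewise pullback all respect the lower bound $N_0$, and then upgrade $E'\in\widetilde{\mathcal{E}}$ to $E'\in(\mathcal{E}\text{-}{\rm dw}\widetilde{\mathcal{X}})^{\perp}$ by the dual-direction analogue of Lemma \ref{dwboundedisdg} (which is again the content of \cite[Lemma 2.4]{CFH06}, also invoked by the paper). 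Your homotopy induction with bounded-below target and possibly unbounded source is sound: each equation $f_n=d^{E'}_{n+1}s_n+s_{n-1}d^Y_n$ involves only finitely many previously constructed terms, and the lifting uses that $0\to{\rm Z}_{n+1}E'\to E'_{n+1}\to{\rm Z}_nE'\to0$ is right $\mathcal{X}$-acyclic for $E'\in\widetilde{\mathcal{E}}$, exactly as in the second paragraph of the proof of Proposition \ref{prop:(dgX,E)}. The one bookkeeping point that deserves explicit mention is acyclicity of $E$ at the boundary degree $N_0$ itself (cycles there must still be killed, which your restriction $n_i\geq N_0$ permits since the new summand sits in degree $N_0+1$). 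Your route is shorter given the earlier lemmas and even yields the stronger conclusion $X\in\mathcal{E}\text{-}{\rm dg}\widetilde{\mathcal{X}}$; what it gives up is the explicit truncation-compatible tower of approximations $X^{(n)}\to C_{[n,0]}$, which the paper deliberately constructs here because the proof of Lemma \ref{lem:dwX-ap} later quotes that inductive structure (the compatible right $\mathcal{E}\text{-}{\rm dw}\widetilde{\mathcal{X}}$-approximations $c_n$) rather than just the statement of the lemma.
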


\begin{proof}
Without loss of generality, we may assume that $C = \cdots \rightarrow C_1\rightarrow C_0\rightarrow 0$. Denote by $C_{[n,0]} = 0\rightarrow C_n\rightarrow \cdots \rightarrow C_1\rightarrow C_0\rightarrow 0$ the left brutal truncation of $C$ at degree $n$. Then there is a direct system
$C_{[0,0]}\rightarrow C_{[1,0]}\rightarrow  \cdots \rightarrow C_{[n,0]}\rightarrow \cdots
$, such that
$C =\varinjlim_{n\geq 0} C_{[n,0]}$.

We first prove by induction that the assertion holds for any bounded complex $C_{[n,0]}$. Let $n = 0$. For $C_0\in \mathcal{A}$, there is an $\mathcal{X}$-resolution $\cdots \rightarrow X_1\rightarrow X_0\rightarrow C_0\rightarrow 0$. Let
$K_0 = {\rm Ker}(X_0\rightarrow C_0)$. Then, there is a short exact sequence of complexes  $0\rightarrow K^{(0)}\rightarrow X^{(0)} \stackrel{\pi^{(0)}}\rightarrow C_{[0,0]}\rightarrow 0$, where $C_{[0,0]} = {\rm S}^0C_0$,  $X^{(0)}$ is the deleted $\mathcal{X}$-resolution $\cdots \rightarrow X_2\rightarrow X_1 \rightarrow X_0\rightarrow 0$ of $C_0$, and $K^{(0)} = \cdots\rightarrow X_2\rightarrow X_1\rightarrow K_0\rightarrow 0$. We note that the negative items of $X^{(0)}$ and $K^{(0)}$ are zero, $X^{(0)}\in \mathcal{E}\text{-}{\rm dw}\widetilde{\mathcal{X}}$, and $K^{(0)}$ is right $\mathcal{X}$-acyclic. It follows from \cite[Lemma 2.4]{CFH06} that for any complex $X'\in \mathcal{E}\text{-}{\rm dw}\widetilde{\mathcal{X}}$, the Hom-complex ${\rm Hom}_{\mathcal{A}}(X', K^{(0)})$ is acyclic. Then we have
${\rm Ext}^1_{{\rm Ch}(\mathcal{E})}(X', K^{(0)}) = {\rm Ext}^1_{dw}(X', K^{(0)}) \cong {\rm H}_{-1}{\rm Hom}_{\mathcal{A}}(X', K^{(0)}) = 0$, i.e., $K^{(0)}\in (\mathcal{E}\text{-}{\rm dw}\widetilde{\mathcal{X}})^{\perp}$. It follows from the construction that $\pi^{(0)}$ is a right $\mathcal{E}\text{-}{\rm dw}\widetilde{\mathcal{X}}$-approximation.

Now we assume that the assertion is true for $C_{[n,0]}$, i.e., there is a short exact sequence  $0\rightarrow K^{(n)}\rightarrow X^{(n)}\stackrel{\pi^{(n)}}\rightarrow C_{[n,0]}\rightarrow 0$ in ${\rm Ch}(\mathcal{A}, \mathcal{E})$, where $X^{(n)}\in \mathcal{E}\text{-}{\rm dw}\widetilde{\mathcal{X}}$ and $K^{(n)}\in (\mathcal{E}\text{-}{\rm dw}\widetilde{\mathcal{X}})^{\perp}$ with both zero negative items. We consider an $\mathcal{X}$-resolution  $\cdots \rightarrow P_1\rightarrow P_0\stackrel{\varepsilon}\rightarrow C_{n+1}\rightarrow 0$ of the object $C_{n+1}$. Then $\varepsilon: P_0\rightarrow C_{n+1}$ induces a chain map $P\rightarrow {\rm S}^nC_{n+1}$, denoted also by $\varepsilon$, where $P$ is obtained by setting each $P_i$ at degree $n+i$. Similar to the above argument, $P$ is a right $\mathcal{E}\text{-}{\rm dw}\widetilde{\mathcal{X}}$-approximation of ${\rm S}^nC_{n+1}$ and ${\rm Ker}\varepsilon \in (\mathcal{E}\text{-}{\rm dw}\widetilde{\mathcal{X}})^{\perp}$. It is clear that $d_{n+1}^C: C_{n+1}\rightarrow C_n$ induces a chain map $d: {\rm S}^nC_{n+1}\rightarrow C_{[n,0]}$. As $\pi^{(n)}: X^{(n)}\rightarrow C_{[n,0]}$ is a right $\mathcal{E}\text{-}{\rm dw}\widetilde{\mathcal{X}}$-approximation, for $d\varepsilon: P\rightarrow C_{[n,0]}$, there exists a chain map $f: P\rightarrow X^{(n)}$ such that the following diagram commutes
 $$\xymatrix@C=22pt@R=20pt{ P\ \ar[d]^{\varepsilon} \ar[r]^{f} & X^{(n)}\ar[d]^{\pi^{(n)}} \\
{\rm S}^nC_{n+1}\ \ar[r]^{d} &C_{[n,0]}.
  }$$Set $X^{(n+1)} = {\rm Con}(f)$. Then the negative items of $X^{(n+1)}$ vanish and $X^{(n+1)}\in \mathcal{E}\text{-}{\rm dw}\widetilde{\mathcal{X}}$. The above diagram induces the following commutative diagram
  $$\xymatrix@C=22pt@R=20pt{ &0  \ar[d] &0\ar[d] &0 \ar[d] \\
  0\ar[r] &K^{(n)} \ar[r]^{g^{(n)}} \ar[d] &K^{(n+1)} \ar[r]\ar[d] &\Sigma\mathrm{Ker}\varepsilon \ar[r]^{} \ar[d] &0\\
  0\ar[r] &X^{(n)} \ar[r] \ar[d]^{\pi^{(n)}} &X^{(n+1)} \ar[r]\ar[d]^{\pi^{(n+1)}} &\Sigma P \ar[r]^{} \ar[d]^{\Sigma\varepsilon} &0\\
0\ar[r] &C_{[n,0]}\ar[d]\ar[r] &C_{[n+1,0]}\ar[d] \ar[r] &\Sigma{\rm S}^{n}C_{n+1}\ar[d] \ar[r] &0 \\
 &0  &0 &\;0. }$$By the third column, ${\rm Coker}g^{(n)} \cong\Sigma\mathrm{Ker}\varepsilon\in (\mathcal{E}\text{-}{\rm dw}\widetilde{\mathcal{X}})^{\perp}$. Since $K^{(n)}\in (\mathcal{E}\text{-}{\rm dw}\widetilde{\mathcal{X}})^{\perp}$, it follows from the first row that $K^{(n+1)}\in (\mathcal{E}\text{-}{\rm dw}\widetilde{\mathcal{X}})^{\perp}$ with vanished negative items.

Note that all columns are right $\mathcal{X}$-acyclic and each $C_{[i,0]}$ is the left brutal truncation of $C$. By Lemma \ref{lem:directlimit} we get the required short exact sequence
$$0\longrightarrow K \longrightarrow X \longrightarrow C \longrightarrow 0$$
in ${\rm Ch}(\mathcal{A}, \mathcal{E})$, where $X = \varinjlim_{n\geq 0} X^{(n)}$ and $K = \varinjlim_{n\geq 0} K^{(n)}$ are bounded below. It follows from Lemma \ref{admissiblebalancedpairs}(3), Lemma \ref{limit} and \cite[Lemma 2.4]{CFH06} that $X\in \mathcal{E}\text{-}{\rm dw}\widetilde{\mathcal{X}}$ and $K\in (\mathcal{E}\text{-}{\rm dw}\widetilde{\mathcal{X}})^{\perp}$.
\end{proof}

\begin{lemma}\label{lem:dwX-ap}
For any complex $(C, d)$, there exists a short exact sequence $0\rightarrow K\rightarrow X\rightarrow C\rightarrow 0$ in ${\rm Ch}(\mathcal{A}, \mathcal{E})$, where $X\in \mathcal{E}\text{-}{\rm dw}\widetilde{\mathcal{X}}$ and $K$ is right $\mathcal{X}$-acyclic. Moreover, if $(\mathcal{E}\text{-}{\rm dw}\widetilde{\mathcal{X}})^{\perp}$ is closed under direct sums, then $K\in (\mathcal{E}\text{-}{\rm dw}\widetilde{\mathcal{X}})^{\perp}$.
\end{lemma}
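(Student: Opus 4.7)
The plan is to extend Lemma \ref{lem:dwX-ap4>0} (which handles bounded-below complexes) to arbitrary complexes by a compatible inverse-limit argument over the brutal truncations of $C$. For each integer $m\geq 0$, let $C^{(m)}$ denote the brutal truncation of $C$ that keeps $C_i$ in degrees $i\geq -m$ and is zero in degrees $i<-m$; each $C^{(m)}$ is bounded below, and the natural projections $C^{(m+1)}\twoheadrightarrow C^{(m)}$ (identity in degrees $\geq -m$, zero in degree $-m-1$) form an inverse system of degreewise-split epimorphisms with $C=\varprojlim_m C^{(m)}$.

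I would construct, by induction on $m$, a compatible inverse system of short exact sequences
\[
0\to K^{(m)}\to X^{(m)}\xrightarrow{\pi^{(m)}} C^{(m)}\to 0
\]
in ${\rm Ch}(\mathcal{A},\mathcal{E})$, where $X^{(m)}\in\mathcal{E}\text{-}{\rm dw}\widetilde{\mathcal{X}}$ and $K^{(m)}\in(\mathcal{E}\text{-}{\rm dw}\widetilde{\mathcal{X}})^\perp$ are bounded below at $-m$, together with degreewise-split transition epimorphisms $X^{(m+1)}\twoheadrightarrow X^{(m)}$ covering the corresponding ones on the $C^{(m)}$. The base case $m=0$ is Lemma \ref{lem:dwX-ap4>0} applied to $C^{(0)}$. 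For the inductive step, I would pick an $\mathcal{X}$-resolution of $C_{-m-1}$ and splice it onto $X^{(m)}$ by a mapping-cone construction analogous to the one in Lemma \ref{lem:dwX-ap4>0} (but oriented to attach at the bottom of the complex rather than the top). The cross-differentials $f_i\colon X^{(m)}_i\to Q_{i+m}$ are built by successive liftings of the obstructions $d^{C}_{i}\circ\pi^{(m)}_{i}$ through the right $\mathcal{X}$-acyclicity of the resolution $Q$, which guarantees $d^{2}=0$ in $X^{(m+1)}$, that $\pi^{(m+1)}$ is a well-defined chain map, and that $K^{(m+1)}$ remains in $(\mathcal{E}\text{-}{\rm dw}\widetilde{\mathcal{X}})^\perp$.

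Applying Lemma \ref{lem:relative1.5.13incomplexes} to the resulting compatible system of short exact sequences—whose degreewise transition maps eventually stabilize and are surjective—one obtains the desired short exact sequence $0\to K\to X\to C\to 0$ in ${\rm Ch}(\mathcal{A},\mathcal{E})$, where $X=\varprojlim_m X^{(m)}$ and $K=\varprojlim_m K^{(m)}$. In each degree $i$, $X_i$ assembles from $\mathcal{X}$-objects contributed by the inductive steps and lies in $\mathcal{X}$, so $X\in\mathcal{E}\text{-}{\rm dw}\widetilde{\mathcal{X}}$. For right $\mathcal{X}$-acyclicity of $K$: given $X'\in\mathcal{X}$, the homology ${\rm H}_n{\rm Hom}_\mathcal{A}(X',K)$ at any fixed $n$ depends only on finitely many consecutive degrees of $K$, which agree with those of $K^{(m)}$ for $m$ large enough; since each $K^{(m)}$ is right $\mathcal{X}$-acyclic, so is $K$.

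For the moreover statement, assume $(\mathcal{E}\text{-}{\rm dw}\widetilde{\mathcal{X}})^\perp$ is closed under direct sums. By arranging the inductive construction so that the new graded summand $L_{m+1}$ added at step $m+1$ together with a snake-lemma analysis of the transition kernels $\ker(K^{(m+1)}\to K^{(m)})$ produces a global identification of $K$ with a direct sum of complexes each of which lies in $(\mathcal{E}\text{-}{\rm dw}\widetilde{\mathcal{X}})^\perp$ (the summands being essentially the kernels of the $\mathcal{X}$-approximations splicing of the $C_{-m-1}$), the closure hypothesis yields $K\in(\mathcal{E}\text{-}{\rm dw}\widetilde{\mathcal{X}})^\perp$. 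The main technical obstacle is the inductive extension step, where one must design the mapping-cone splicing with enough control to ensure $d^{2}=0$, construct the chain-map projection $X^{(m+1)}\twoheadrightarrow X^{(m)}$ covering $C^{(m+1)}\twoheadrightarrow C^{(m)}$, verify that $K^{(m+1)}$ inherits membership in $(\mathcal{E}\text{-}{\rm dw}\widetilde{\mathcal{X)}}^\perp$ at each stage, and finally reconcile the inverse-limit description with the direct-sum decomposition needed for the moreover part.
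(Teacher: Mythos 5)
Your reduction to the bounded-below case is the right instinct, and the inductive extension step itself (attaching an $\mathcal{X}$-resolution $\cdots\to Q_1\to Q_0\to C_{-m-1}\to 0$ at the bottom of $X^{(m)}$, with cross-differentials obtained by successive liftings through the right $\mathcal{X}$-acyclic resolution) can be made to work for each finite stage. The gap is in the passage to the limit. An $\mathcal{X}$-resolution extends in the direction of \emph{increasing} degree, so attaching the resolution of $C_{-m-1}$ places $Q_j$ in degree $-m-1+j$ and therefore adds a new $\mathcal{X}$-summand in \emph{every} degree $\geq -m-1$. Consequently, for a fixed degree $i$ the tower $X^{(0)}_i\twoheadleftarrow X^{(1)}_i\twoheadleftarrow\cdots$ acquires a new direct summand at every stage $k\geq -i$ and never stabilizes, contrary to your claim that the degreewise transition maps "eventually stabilize". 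The inverse limit in degree $i$ is of the form $X^{(m_0)}_i\oplus\prod_{k>m_0}Q^{(k)}_{i+k}$, an infinite \emph{product} of objects of $\mathcal{X}$, and such products need not lie in $\mathcal{X}$ (already for $\mathcal{X}=\mathcal{P}$ over $\mathbb{Z}$, where $\prod_{\mathbb{N}}\mathbb{Z}$ is not projective). So the conclusion $X\in\mathcal{E}\text{-}{\rm dw}\widetilde{\mathcal{X}}$ fails; the same non-stabilization invalidates your argument that ${\rm H}_n{\rm Hom}_{\mathcal{A}}(X',K)$ depends only on finitely many degrees of $K$ that agree with those of some $K^{(m)}$; and the "moreover" part would in any case need closure of $(\mathcal{E}\text{-}{\rm dw}\widetilde{\mathcal{X}})^{\perp}$ under products rather than the assumed closure under direct sums.

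This is exactly why the paper runs the argument through a \emph{direct} limit. It writes $C=\varinjlim C_{\geq n}$ with $C_{\geq n}=\cdots\to C_{n+1}\to{\rm Ker}\,d_n\to 0$, covers each $C_{\geq n}$ by Lemma \ref{lem:dwX-ap4>0}, and then corrects the lifted comparison maps $i'_n\colon X'_{(n)}\to X'_{(n-1)}$ into degreewise split \emph{mono}morphisms by pushing out along an admissible monomorphism $X'_{(0)}\rightarrowtail Q_0$ with $Q_0\in\widetilde{\mathcal{X}}_{\mathcal{E}}$, supplied by the complete cotorsion pair $(\mathcal{E}\text{-}{\rm dg}\widetilde{\mathcal{X}},\widetilde{\mathcal{E}})$. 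A direct limit along degreewise split monomorphisms is controlled by Lemma \ref{limit}, whose proof reduces everything to the isomorphism ${\rm Hom}_{\mathcal{A}}(\bigoplus_n K^{(n)},Y)\cong\prod_n{\rm Hom}_{\mathcal{A}}(K^{(n)},Y)$, i.e., to \emph{direct sums} of complexes in $\mathcal{E}\text{-}{\rm dw}\widetilde{\mathcal{X}}$ --- harmless, since $\mathcal{E}\text{-}{\rm dw}\widetilde{\mathcal{X}}$ is the left class of a cotorsion pair; and the colimit presentation $0\to\bigoplus K_{(n)}\to\bigoplus K_{(n)}\to K\to 0$ is what lets the direct-sum hypothesis on $(\mathcal{E}\text{-}{\rm dw}\widetilde{\mathcal{X}})^{\perp}$, together with heredity, yield $K\in(\mathcal{E}\text{-}{\rm dw}\widetilde{\mathcal{X}})^{\perp}$. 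To salvage your scheme you would need each degree to receive only finitely many new summands, which the unboundedness of $\mathcal{X}$-resolutions forbids.
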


\begin{proof}
Let $C_{\geq n} = \cdots \rightarrow C_{n+2}\rightarrow C_{n+1}\rightarrow {\rm Ker}d_n\rightarrow 0$. Then the natural chain map $\iota_{n}: C_{\geq n}\rightarrow C_{\geq n-1}$ yields a  direct system $(C_{\geq n}, \iota_{n})$, and $C = \varinjlim_{n\leq 0} C_{\geq n}$.

By Lemma \ref{lem:dwX-ap4>0} we have a chain of short exact sequences in ${\rm Ch}(\mathcal{A}, \mathcal{E})$

$$\xymatrix@C=22pt@R=20pt{
 K'_{(0)} \ar@{-->}[r]^{j'_0\ } \ar@{>->}[d]  &K'_{(-1)} \ar@{>->}[d]\ar@{-->}[r]^{j'_{-1}\ } & K'_{(-2)} \ar@{-->}[r]^{j'_{-2}} \ar@{>->}[d]  & \cdots\\
 X'_{(0)} \ar@{->>}[d]^{c_0} \ar@{-->}[r]^{i'_0\ } & X'_{(-1)} \ar@{-->}[r]^{i'_{-1}\ } \ar@{->>}[d]^{c_{-1}} & X'_{(-2)} \ar@{->>}[d]^{c_{-2}}  \ar@{-->}[r]^{i'_{-1}} &\cdots \\
 C_{\geq 0} \ar[r]^{\iota_{0}\ } & C_{\geq -1} \ar[r]^{\iota_{-1}} & C_{\geq -2}  \ar[r]^{\iota_{-2}} & \cdots.
  }$$
It follows from the proof of Lemma \ref{lem:dwX-ap4>0} that all $X'_{(n)}$ are bounded below in $\mathcal{E}\text{-}{\rm dw}\widetilde{\mathcal{X}}$ and all $c_{n}$ are right $\mathcal{E}\text{-}{\rm dw}\widetilde{\mathcal{X}}$-approximations.
Thus there exists $ i'_n : X'_{(n)} \rightarrow X'_{(n-1)}$ and $j'_n : K'_{(n)} \rightarrow K'_{(n-1)}$ such that the diagram is commutative. It follows from Lemma \ref{dwboundedisdg} that $X'_{(n)}\in \mathcal{E}\text{-}{\rm dg}\widetilde{\mathcal{X}}$.
By Proposition \ref{thm:(dgX,E)}, $X'_{(0)}$ admits a short exact sequence in ${\rm Ch}(\mathcal{A}, \mathcal{E})$
$$\xymatrix{X'_{(0)}\ \ar@{>->}[r] & Q_{0}\ar@{->>}[r] & L_{0}\\}$$
with $L_{0}\in \mathcal{E}\text{-}{\rm dg}\widetilde{\mathcal{X}}$, and $Q_{0} \in \mathcal{E}\text{-}{\rm dg}\widetilde{\mathcal{X}} \cap \widetilde{\mathcal{E}} = \widetilde{\mathcal{X}}_\mathcal{E}$ by Lemma \ref{lem:Xe}. It follows from the pushout diagram
$$\xymatrix@C=22pt@R=20pt{ X'_{(0)}\ \ar@{>->}[r]^{a_0} \ar[d]^{-i'_0} &Q_{0}\ \ar[d]^{f_0}\ar@{->>}[r] & L_{0} \ar@{=}[d]\\
X'_{(-1)}\ \ar@{>->}[r]^{\alpha_0} &U_{0}\ \ar@{->>}[r] & L_{0}
}$$
together with Lemma \ref{pushoutinexactcategory} that the sequence $$\xymatrix{X'_{(0)}\ \ar@{>->}[r]^{\tiny\begin{pmatrix}a_0 \\i'_0\end{pmatrix}\ \ \ \ \ \ } & Q_{0} \oplus X'_{(-1)}\ \ar@{->>}[r]^{\ \ \ (f_0 \ \alpha_0)} & U_{0}\\}$$
is exact in ${\rm Ch}(\mathcal{A}, \mathcal{E})$. Since $X'_{(-1)}, L_{0} \in \mathcal{E}\text{-}{\rm dg}\widetilde{\mathcal{X}}$, $U_{0}\in \mathcal{E}\text{-}{\rm dg}\widetilde{\mathcal{X}} \subseteq \mathcal{E}\text{-}{\rm dw}\widetilde{\mathcal{X}}$, the admissible monomorphism $\begin{pmatrix}a_0 \\i'_0\end{pmatrix}$ becomes degree-wise split. We get the following commutative diagram
$$\xymatrix@C=35pt@R=23pt{
 K'_{(0)} \ar@{-->}[r]^{\begin{pmatrix}b_0 \\j'_0\end{pmatrix}\ \ \ \ } \ar@{>->}[d]  &Q_{0} \oplus K'_{(-1)} \ar@{>->}[d]\ar@{-->}[r] & Q_{0} \oplus K'_{(-2)} \ar@{-->}[r] \ar@{>->}[d]  & \cdots\\
 X'_{(0)} \ar@{->>}[d]^{c_0} \ar@{>->}[r]^{\begin{pmatrix}a_0 \\i'_0\end{pmatrix}\ \ \ \ } & Q_{0} \oplus X'_{(-1)} \ar@{->}[r]^{\begin{pmatrix}1 \ \ 0 \\ 0 \ \ i'_{-1}\end{pmatrix}} \ar@{->>}[d]^{(0, c_{-1})} & Q_{0} \oplus X'_{(-2)} \ar@{->>}[d]^{(0, c_{-2})}  \ar@{-->}[r] &\cdots \\
 C_{\geq 0} \ar[r]^{\iota_{0}} & C_{\geq -1} \ar[r]^{\iota_{-1}} & C_{\geq -2}  \ar[r]^{\iota_{-2}} & \cdots
  }$$
where all the columns are exact in ${\rm Ch}(\mathcal{A}, \mathcal{E})$, and
 $j_0=\begin{pmatrix}b_0 \\j'_0\end{pmatrix}$ is admissible monic with $b_0$ being the restriction of $a_0$ to $K'_{(0)}$. Since $Q_{0} \in \mathcal{E}\text{-}{\rm dg}\widetilde{\mathcal{X}} \cap \widetilde{\mathcal{E}} = \widetilde{\mathcal{X}}_\mathcal{E} \subseteq (\mathcal{E}\text{-}{\rm dw}\widetilde{\mathcal{X}})^{\perp}$, we have $Q_{0} \oplus X'_{(n)}\in \mathcal{E}\text{-}{\rm dg}\widetilde{\mathcal{X}}$ and $Q_{0} \oplus K'_{(n)} \in (\mathcal{E}\text{-}{\rm dw}\widetilde{\mathcal{X}})^{\perp}$ for all $n\leq -1$.
Repeating the above process we get a commutative diagram of complexes and renamed by
$$\xymatrix@C=22pt@R=20pt{
 K_{(0)} \ar@{>->}[r]^{j_0\ } \ar@{>->}[d]  &K_{(-1)} \ar@{>->}[d]\ar@{>->}[r]^{j_{-1}} & K_{(-2)} \ar@{>->}[r]^{j_{-2}} \ar@{>->}[d]  & \cdots\\
 X_{(0)} \ar@{->>}[d]^{c_0} \ar@{>->}[r]^{i_0\ } & X_{(-1)} \ar@{>->}[r]^{i_{-1}} \ar@{->>}[d]^{c_{-1}} & X_{(-2)} \ar@{->>}[d]^{c_{-2}}  \ar@{>->}[r]^{i_{-1}} &\cdots \\
 C_{\geq 0} \ar[r]^{\iota_{0}\ } & C_{\geq -1} \ar[r]^{\iota_{-1}} & C_{\geq -2}  \ar[r]^{\iota_{-2}} & \cdots,
  }$$
where all the columns are exact in ${\rm Ch}(\mathcal{A}, \mathcal{E})$, $j_n$ is admissible monic, $i_n$ is monic and degree-wise split, $K_{(n)}\in (\mathcal{E}\text{-}{\rm dw}\widetilde{\mathcal{X}})^{\perp}\subseteq \widetilde{\mathcal{E}}$  and $X_{(n)}\in \mathcal{E}\text{-}{\rm dw}\widetilde{\mathcal{X}}$ for $n\leq 0$.

Denote the limits of direct systems $(X_{(n)}, i_n)$ and $(K_{(n)}, j_n)$ by $X = \varinjlim_{n\leq 0} X_{(n)}$ and $K = \varinjlim_{n\leq 0} K_{(n)}$, respectively, we get the desired exact sequence $0\rightarrow K\rightarrow X\rightarrow C \rightarrow 0$ which is degree-wise left $\mathcal{Y}$-acyclic by Lemma \ref{lem:directlimit}. Then by Lemmas \ref{admissiblebalancedpairs} and \ref{limit}, $X \in \mathcal{E}\text{-}{\rm dw}\widetilde{\mathcal{X}}$ and $K$ is right $\mathcal{X}$-acyclic.

Since each $j_n$ is admissible monic, by applying ${\rm Hom}_{\mathcal{A}}(-,\mathcal{Y})$ we obtain that the short exact sequence $0\rightarrow \bigoplus K_{(n)} \rightarrow \bigoplus K_{(n)}\rightarrow K\longrightarrow 0$ is exact in ${\rm Ch}(\mathcal{A}, \mathcal{E})$. Together with the  assumption that $(\mathcal{E}\text{-}{\rm dw}\widetilde{\mathcal{X}})^{\perp}$ is closed under direct sums and the fact that $(\mathcal{E}\text{-}{\rm dw}\widetilde{\mathcal{X}}, (\mathcal{E}\text{-}{\rm dw}\widetilde{\mathcal{X}})^{\perp})$ is hereditary, we have $K\in (\mathcal{E}\text{-}{\rm dw}\widetilde{\mathcal{X}})^{\perp}$.
\end{proof}

\begin{proposition}\label{prop:(dwX,+)}
If $(\mathcal{E}\text{-}{\rm dw}\widetilde{\mathcal{X}})^{\perp}$ is closed under direct sums, then the cotorsion pair $(\mathcal{E}\text{-}{\rm dw}\widetilde{\mathcal{X}}, (\mathcal{E}\text{-}{\rm dw}\widetilde{\mathcal{X}})^{\perp})$ is complete.
\end{proposition}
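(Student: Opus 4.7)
The plan is to complement Lemma \ref{lem:dwX-ap} via Salce's classical pullback trick. Under the direct-sum hypothesis, Lemma \ref{lem:dwX-ap} already provides, for every complex $C\in{\rm Ch}(\mathcal{A},\mathcal{E})$, a special $\mathcal{E}\text{-}{\rm dw}\widetilde{\mathcal{X}}$-precover: an admissible short exact sequence $0\to K\to X\to C\to 0$ with $X\in\mathcal{E}\text{-}{\rm dw}\widetilde{\mathcal{X}}$ and $K\in(\mathcal{E}\text{-}{\rm dw}\widetilde{\mathcal{X}})^{\perp}$. Completeness therefore reduces to constructing, for each $C$, a dual special preenvelope, i.e.\ an admissible short exact sequence $0\to C\to M\to X'\to 0$ with $X'\in\mathcal{E}\text{-}{\rm dw}\widetilde{\mathcal{X}}$ and $M\in(\mathcal{E}\text{-}{\rm dw}\widetilde{\mathcal{X}})^{\perp}$.

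First I would embed $C$ into the contractible complex $I:={\rm Con}({\rm id}_C)$ via the natural chain map $c_n\mapsto(0,c_n)\in C_{n-1}\oplus C_n=I_n$. This monomorphism is split in each degree with cokernel $\Sigma C$, so it yields an admissible short exact sequence $0\to C\to I\to\Sigma C\to 0$ in ${\rm Ch}(\mathcal{A},\mathcal{E})$. I then claim $I\in(\mathcal{E}\text{-}{\rm dw}\widetilde{\mathcal{X}})^{\perp}$: since $\mathcal{E}$ consists of right $\mathcal{X}$-acyclic sequences, every object of $\mathcal{X}$ is $\mathcal{E}$-projective, so for any $X\in\mathcal{E}\text{-}{\rm dw}\widetilde{\mathcal{X}}$ every extension $0\to I\to N\to X\to 0$ in ${\rm Ch}(\mathcal{E})$ is degreewise split, whence
$${\rm Ext}^1_{{\rm Ch}(\mathcal{E})}(X,I)={\rm Ext}^1_{dw}(X,I)\cong{\rm H}_{-1}{\rm Hom}_{\mathcal{A}}(X,I)=0$$
by Lemma \ref{lem:Gil04} together with the contractibility of ${\rm Hom}_{\mathcal{A}}(X,I)$, which follows from that of $I$.

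To finish, I would apply Lemma \ref{lem:dwX-ap} to $\Sigma C$ to obtain $0\to L\to X'\to\Sigma C\to 0$ with $X'\in\mathcal{E}\text{-}{\rm dw}\widetilde{\mathcal{X}}$ and $L\in(\mathcal{E}\text{-}{\rm dw}\widetilde{\mathcal{X}})^{\perp}$, and form the pullback of the admissible epimorphisms $X'\twoheadrightarrow\Sigma C\twoheadleftarrow I$ in the exact category ${\rm Ch}(\mathcal{A},\mathcal{E})$. Standard exact-category diagram chasing produces admissible short exact sequences
$$0\to C\to M\to X'\to 0\qquad\text{and}\qquad 0\to L\to M\to I\to 0.$$
Since $(\mathcal{E}\text{-}{\rm dw}\widetilde{\mathcal{X}})^{\perp}$ is closed under extensions as a right orthogonal class, the right-hand sequence forces $M\in(\mathcal{E}\text{-}{\rm dw}\widetilde{\mathcal{X}})^{\perp}$, and the left-hand sequence is the required special preenvelope. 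The main input is Lemma \ref{lem:dwX-ap} — which is precisely where the direct-sum closure assumption is used; once that is in hand, this Salce manoeuvre is formal, and so the cotorsion pair is complete.
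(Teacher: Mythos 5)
Your argument is correct, and it is the same strategy as the paper's: Lemma \ref{lem:dwX-ap} supplies the special precovers, and Salce's pullback trick converts them into special preenvelopes. The only divergence is the choice of the initial admissible monomorphism out of $C$ into an object of $(\mathcal{E}\text{-}{\rm dw}\widetilde{\mathcal{X}})^{\perp}$: the paper uses a degreewise $\widetilde{\mathcal{Y}}_\mathcal{E}$-preenvelope $0\to C\to Y\to L\to 0$ obtained from the dual of Proposition \ref{prop:(X,Ch)} (which in turn rests on dualizing Lemmas \ref{lem:l-E-ap} and \ref{lem:r-X-ap}), and then observes that $Y$, being contractible, lies in $(\mathcal{E}\text{-}{\rm dw}\widetilde{\mathcal{X}})^{\perp}$; you instead use the cone $I={\rm Con}({\rm id}_C)$, which is contractible for the same formal reason and whose inclusion $C\rightarrowtail I$ is degreewise split, hence automatically admissible. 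Since in both cases the only property of the middle term that is actually used is contractibility (via ${\rm Ext}^1_{{\rm Ch}(\mathcal{E})}(X,-)={\rm Ext}^1_{dw}(X,-)\cong{\rm H}_{-1}{\rm Hom}_{\mathcal{A}}(X,-)$ for $X\in\mathcal{E}\text{-}{\rm dw}\widetilde{\mathcal{X}}$, exactly as in Proposition \ref{prop:(dwX)}), your version is self-contained and avoids invoking any dual statements; the paper's choice of $Y\in\widetilde{\mathcal{Y}}_\mathcal{E}$ buys nothing extra for this proposition, though those dual constructions are needed elsewhere in the paper anyway. Both proofs correctly locate the use of the direct-sum hypothesis inside Lemma \ref{lem:dwX-ap}.
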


\begin{proof}
Part of the completeness has been proved in Lemma \ref{lem:dwX-ap}. We will use a standard argument (known as Salce's trick) to prove the other part.

Dual to Proposition \ref{prop:(X,Ch)}, one can prove that for any complex $C$, there is a short exact sequence $0\rightarrow C\rightarrow Y\rightarrow L\rightarrow 0$ in ${\rm Ch}(\mathcal{A}, \mathcal{E})$, where $Y\in \widetilde{\mathcal{Y}}_\mathcal{E}$. For the complex $L$, we infer from Lemma \ref{lem:dwX-ap} that there is a short exact sequence $0\rightarrow K\rightarrow X\rightarrow L\rightarrow 0$ in ${\rm Ch}(\mathcal{A}, \mathcal{E})$, where $X\in \mathcal{E}\text{-}{\rm dw}\widetilde{\mathcal{X}}$ and $K\in (\mathcal{E}\text{-}{\rm dw}\widetilde{\mathcal{X}})^{\perp}$. Consider the following pullback of $X\rightarrow L$ and $Y\rightarrow L$:
$$\xymatrix@C=22pt@R=20pt{ & & 0\ar[d] & 0\ar[d] \\
& & K \ar@{=}[r]^{} \ar[d]  &K \ar[d]\\
0 \ar[r] & C \ar@{=}[d] \ar[r] &D \ar@{-->}[r] \ar@{-->}[d] & X \ar[r] \ar[d] &0 \\
0 \ar[r] & C \ar[r] & Y \ar[r]^{} \ar[d] & L \ar[r] \ar[d]& 0\\
 & & 0 & \;0.
  }$$
Note that $\widetilde{\mathcal{Y}}_\mathcal{E} \subseteq (\mathcal{E}\text{-}{\rm dw}\widetilde{\mathcal{X}})^{\perp}$ since the complexes in $\widetilde{\mathcal{Y}}_\mathcal{E}$ are contractible. We infer from the middle column that $D\in (\mathcal{E}\text{-}{\rm dw}\widetilde{\mathcal{X}})^{\perp}$ since so are $Y$ and $K$.  Then, for any complex $C$, there is a short exact sequence
$0\rightarrow C\rightarrow D\rightarrow X\rightarrow 0$, where $X\in \mathcal{E}\text{-}{\rm dw}\widetilde{\mathcal{X}}$ and $D\in (\mathcal{E}\text{-}{\rm dw}\widetilde{\mathcal{X}})^{\perp}$. Now, together with Lemma \ref{lem:dwX-ap}, we infer that the cotorsion pair $(\mathcal{E}\text{-}{\rm dw}\widetilde{\mathcal{X}}, (\mathcal{E}\text{-}{\rm dw}\widetilde{\mathcal{X}})^{\perp})$ is complete.
\end{proof}

\begin{lemma}\label{lem:2Xe}
$\mathcal{E}\text{-}{\rm dw}\widetilde{\mathcal{X}} \cap (\mathcal{E}\text{-}{\rm dw}\widetilde{\mathcal{X}})^{\perp} = \widetilde{\mathcal{X}}_\mathcal{E}$.
\end{lemma}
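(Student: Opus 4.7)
The plan is to reduce to the fact, already recorded in Lemma \ref{lem:Xe}(1), that complexes in $\widetilde{\mathcal{X}}_\mathcal{E}$ are precisely contractible direct sums $\bigoplus {\rm D}^n X'_n$ with $X'_n \in \mathcal{X}$, and then argue that any $X$ in the intersection is a direct summand of such a complex.

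For the inclusion $\widetilde{\mathcal{X}}_\mathcal{E} \subseteq \mathcal{E}\text{-}{\rm dw}\widetilde{\mathcal{X}} \cap (\mathcal{E}\text{-}{\rm dw}\widetilde{\mathcal{X}})^{\perp}$, I would simply combine two facts that have already been made explicit in the paper: every complex in $\widetilde{\mathcal{X}}_\mathcal{E}$ has items in $\mathcal{X}$, so it belongs to $\mathcal{E}\text{-}{\rm dw}\widetilde{\mathcal{X}}$; and the remark just before Proposition \ref{prop:(dwX)} already observes $\widetilde{\mathcal{X}}_\mathcal{E} \subseteq (\mathcal{E}\text{-}{\rm dw}\widetilde{\mathcal{X}})^{\perp}$.

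The substantive direction is $\mathcal{E}\text{-}{\rm dw}\widetilde{\mathcal{X}} \cap (\mathcal{E}\text{-}{\rm dw}\widetilde{\mathcal{X}})^{\perp} \subseteq \widetilde{\mathcal{X}}_\mathcal{E}$. Given $X$ in the intersection, I would apply Proposition \ref{prop:(X,Ch)} to obtain a short exact sequence $0\to K \to \tilde X \to X \to 0$ in ${\rm Ch}(\mathcal{A},\mathcal{E})$ with $\tilde X\in \widetilde{\mathcal{X}}_\mathcal{E}$. Since $\tilde X\in \widetilde{\mathcal{X}}_\mathcal{E}\subseteq (\mathcal{E}\text{-}{\rm dw}\widetilde{\mathcal{X}})^{\perp}$ and the cotorsion pair $(\mathcal{E}\text{-}{\rm dw}\widetilde{\mathcal{X}},(\mathcal{E}\text{-}{\rm dw}\widetilde{\mathcal{X}})^{\perp})$ is hereditary (Proposition \ref{prop:(dwX)}), its right class is closed under kernels of admissible epimorphisms, so $K\in (\mathcal{E}\text{-}{\rm dw}\widetilde{\mathcal{X}})^{\perp}$. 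Therefore ${\rm Ext}^1_{{\rm Ch}(\mathcal{E})}(X,K)=0$, the sequence splits, and $X$ is a direct summand of $\tilde X$.

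Finally I would extract the structural consequences of this splitting. By Lemma \ref{lem:Xe}(1) the complex $\tilde X$ is contractible, hence so is $X$; this means each short exact sequence $0\to {\rm Z}_n X\to X_n\to {\rm Z}_{n-1}X\to 0$ splits in $\mathcal{A}$. Using $X\in \mathcal{E}\text{-}{\rm dw}\widetilde{\mathcal{X}}$ (so $X_n\in \mathcal{X}$) together with the standing closure of $\mathcal{X}$ under direct summands (which the paper tacitly uses, e.g.\ in the last line of the proof of Proposition \ref{prop:(dgX,E)}), one gets ${\rm Z}_n X\in \mathcal{X}$. The containment $(\mathcal{E}\text{-}{\rm dw}\widetilde{\mathcal{X}})^{\perp}\subseteq \widetilde{\mathcal{E}}$ (noted at the start of Section 4 via Proposition \ref{thm:(dgX,E)}) guarantees that those split sequences also lie in $\mathcal{E}$, so $X\in \widetilde{\mathcal{X}}_\mathcal{E}$, as desired. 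The only step where one has to be the slightest bit careful is the heredity-based passage from $X\in (\mathcal{E}\text{-}{\rm dw}\widetilde{\mathcal{X}})^{\perp}$ to $K\in (\mathcal{E}\text{-}{\rm dw}\widetilde{\mathcal{X}})^{\perp}$, but with Proposition \ref{prop:(dwX)} in hand this is immediate; no further computation is needed.
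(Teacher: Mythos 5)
Your two easy observations (the inclusion $\widetilde{\mathcal{X}}_\mathcal{E} \subseteq \mathcal{E}\text{-}{\rm dw}\widetilde{\mathcal{X}} \cap (\mathcal{E}\text{-}{\rm dw}\widetilde{\mathcal{X}})^{\perp}$, and the final extraction of $X\in\widetilde{\mathcal{X}}_\mathcal{E}$ from contractibility plus $X_n\in\mathcal{X}$) are fine, but the pivotal step of your argument has a genuine gap. You claim that because $(\mathcal{E}\text{-}{\rm dw}\widetilde{\mathcal{X}}, (\mathcal{E}\text{-}{\rm dw}\widetilde{\mathcal{X}})^{\perp})$ is hereditary, its right class is closed under kernels of admissible epimorphisms, and you use this to put $K$ into $(\mathcal{E}\text{-}{\rm dw}\widetilde{\mathcal{X}})^{\perp}$ and split $0\to K\to \tilde X\to X\to 0$. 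That is not what heredity gives: by the paper's own definition (and the standard one), heredity means the \emph{left} class is closed under kernels of admissible epimorphisms and the \emph{right} class under \emph{cokernels of admissible monomorphisms}. Closure of the right class under kernels of admissible epimorphisms is a strictly stronger "thickness" property which fails in general for hereditary cotorsion pairs (e.g.\ $\mathbb{Z}=\ker(\mathbb{Q}\twoheadrightarrow\mathbb{Q}/\mathbb{Z})$ is not injective, although $({\rm all},\mathcal{I})$ is hereditary). Indeed, the paper has to work to establish exactly this thickness of $(\mathcal{E}\text{-}{\rm dw}\widetilde{\mathcal{X}})^{\perp}$ in the proof of Theorem \ref{thm:M5}, and only under the extra hypothesis that $(\mathcal{E}\text{-}{\rm dw}\widetilde{\mathcal{X}})^{\perp}$ is closed under direct sums (needed for completeness); since Lemma \ref{lem:2Xe} is an input to that theorem and carries no such hypothesis, appealing to thickness here would also be circular.

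The intended argument is both shorter and avoids the approximation sequence entirely: the proof of Proposition \ref{prop:(dwX)} identifies $(\mathcal{E}\text{-}{\rm dw}\widetilde{\mathcal{X}})^{\perp}$ with the class $\widehat{\mathcal{S}}$ of complexes receiving only null-homotopic chain maps from objects of $\mathcal{E}\text{-}{\rm dw}\widetilde{\mathcal{X}}$, via ${\rm Ext}^1_{{\rm Ch}(\mathcal{E})}(X,C)={\rm Ext}^1_{dw}(X,C)\cong {\rm Hom}_{{\rm Ch}(\mathcal{A})}(X,\Sigma C)/\!\sim$. So for $X$ in the intersection, ${\rm Id}_X$ itself is null homotopic, $X$ is contractible, and your concluding paragraph then applies verbatim. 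If you insist on your route through $0\to K\to\tilde X\to X\to 0$, you must justify ${\rm Ext}^1_{{\rm Ch}(\mathcal{E})}(X,K)=0$ by hand, e.g.\ by showing every chain map $W\to X$ with $W\in\mathcal{E}\text{-}{\rm dw}\widetilde{\mathcal{X}}$ lifts along $\tilde X\to X$ (it is null homotopic, hence factors through a disk complex in $\widetilde{\mathcal{X}}_\mathcal{E}$, which lifts by Proposition \ref{prop:(X,Ch)}) --- but that already contains the one-line argument above, so the detour buys you nothing.
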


\begin{proof}
Analogous to the proof of Lemma \ref{lem:Xe}, we have $\widetilde{\mathcal{X}}_\mathcal{E} \subseteq \mathcal{E}\text{-}{\rm dw}\widetilde{\mathcal{X}} \cap (\mathcal{E}\text{-}{\rm dw}\widetilde{\mathcal{X}})^{\perp}$.
For any $X\in \mathcal{E}\text{-}{\rm dw}\widetilde{\mathcal{X}} \cap (\mathcal{E}\text{-}{\rm dw}\widetilde{\mathcal{X}})^{\perp}$, as ${\rm Id}_X\sim 0$, we have $X \in \widetilde{\mathcal{X}}_\mathcal{E}$, as desired.
\end{proof}


Under mild conditions, we have model structures for the chain homotopy categories as stated below.


\begin{theorem}\label{thm:M5}
The following hold on the exact category ${\rm Ch}(\mathcal{A}, \mathcal{E})$:
\begin{enumerate}
\item
If the subcategory $(\mathcal{E}\text{-}{\rm dw}\widetilde{\mathcal{X}})^{\perp}$ is closed under direct sums, then  $\mathcal{M}_{dw\mathcal{X}} = (\mathcal{E}\text{-}{\rm dw}\widetilde{\mathcal{X}}, (\mathcal{E}\text{-}{\rm dw}\widetilde{\mathcal{X}})^{\perp}, {\rm Ch}(\mathcal{A}))$ is a model structure with triangle-equivalence ${\rm Ho}(\mathcal{M}_{dw\mathcal{X}}) \simeq {\bf K}(\mathcal{X})$;
\item
If the subcategory ${}^{\perp}(\mathcal{E}\text{-}{\rm dw}\widetilde{\mathcal{Y}})$ is closed under direct products, then $\mathcal{M}_{dw\mathcal{Y}} = ({\rm Ch}(\mathcal{A}), {^{\perp}(\mathcal{E}\text{-}{\rm dw}\widetilde{\mathcal{Y}})}, \mathcal{E}\text{-}{\rm dw}\widetilde{\mathcal{Y}})$ is a model structure with triangle-equivalence
 ${\rm Ho}(\mathcal{M}_{dw\mathcal{Y}}) \simeq {\bf K}(\mathcal{Y})$.
\end{enumerate}
In particular, in the case of $(\mathcal{E}\text{-}{\rm dw}\widetilde{\mathcal{X}})^{\perp}= {}^{\perp}(\mathcal{E}\text{-}{\rm dw}\widetilde{\mathcal{Y}})$, there are  model structures $\mathcal{M}_{dw\mathcal{X}} = (\mathcal{E}\text{-}{\rm dw}\widetilde{\mathcal{X}}, (\mathcal{E}\text{-}{\rm dw}\widetilde{\mathcal{X}})^{\perp}, {\rm Ch}(\mathcal{A}))$ and  $\mathcal{M}_{dw\mathcal{Y}} = ({\rm Ch}(\mathcal{A}), {^{\perp}(\mathcal{E}\text{-}{\rm dw}\widetilde{\mathcal{Y}})}, \mathcal{E}\text{-}{\rm dw}\widetilde{\mathcal{Y}})$ on the exact category ${\rm Ch}(\mathcal{A}, \mathcal{E})$.
\end{theorem}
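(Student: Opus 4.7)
The plan is to apply the Hovey--Gillespie correspondence \cite[Theorem 3.3]{Gil11} by verifying that $(\mathcal{E}\text{-}{\rm dw}\widetilde{\mathcal{X}},\, (\mathcal{E}\text{-}{\rm dw}\widetilde{\mathcal{X}})^{\perp},\, {\rm Ch}(\mathcal{A}))$ is a Hovey triple on the weakly idempotent complete exact category ${\rm Ch}(\mathcal{A}, \mathcal{E})$, and then to identify the resulting homotopy category $\underline{\mathcal{A}_{cf}}$ with ${\bf K}(\mathcal{X})$ via a mild extension of Lemma \ref{lem:Xe}(2). Part (2) will follow by the dual argument, and the ``in particular'' clause from the automatic closure properties of orthogonals.

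For part (1), the cotorsion pair $(\mathcal{E}\text{-}{\rm dw}\widetilde{\mathcal{X}},\, (\mathcal{E}\text{-}{\rm dw}\widetilde{\mathcal{X}})^{\perp})$ is complete under the given hypothesis by Proposition \ref{prop:(dwX,+)}, and since $\mathcal{E}\text{-}{\rm dw}\widetilde{\mathcal{X}} \cap (\mathcal{E}\text{-}{\rm dw}\widetilde{\mathcal{X}})^{\perp} = \widetilde{\mathcal{X}}_\mathcal{E}$ by Lemma \ref{lem:2Xe}, the second required cotorsion pair $(\widetilde{\mathcal{X}}_\mathcal{E},\, {\rm Ch}(\mathcal{A}))$ is complete by Proposition \ref{prop:(X,Ch)}. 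The new input is thickness of $(\mathcal{E}\text{-}{\rm dw}\widetilde{\mathcal{X}})^{\perp}$: closure under direct summands is automatic, and for two-out-of-three I will use the following observation. Given a short exact sequence $0 \to W' \to W \to W'' \to 0$ in ${\rm Ch}(\mathcal{A}, \mathcal{E})$ and any $X \in \mathcal{E}\text{-}{\rm dw}\widetilde{\mathcal{X}}$, each degreewise sequence $0 \to W'_m \to W_m \to W''_m \to 0$ belongs to $\mathcal{E}$ and each $X_k \in \mathcal{X}$, so ${\rm Hom}_{\mathcal{A}}(X_k, -)$ preserves its exactness; taking products over $k$ (exact in abelian groups) yields a short exact sequence of Hom-complexes
\[ 0 \to {\rm Hom}_{\mathcal{A}}(X, W') \to {\rm Hom}_{\mathcal{A}}(X, W) \to {\rm Hom}_{\mathcal{A}}(X, W'') \to 0. \]
Since any short exact sequence in ${\rm Ch}(\mathcal{A}, \mathcal{E})$ ending at $X$ is degreewise split, ${\rm Ext}^1_{{\rm Ch}(\mathcal{E})}(X,-) = {\rm Ext}^1_{dw}(X,-)$, and Lemma \ref{lem:Gil04} combined with the closure of $\mathcal{E}\text{-}{\rm dw}\widetilde{\mathcal{X}}$ under suspension translates membership in $(\mathcal{E}\text{-}{\rm dw}\widetilde{\mathcal{X}})^{\perp}$ to acyclicity of the Hom-complex ${\rm Hom}_{\mathcal{A}}(X, -)$; the long exact homology sequence then supplies all three two-out-of-three directions.

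With the model structure in hand, every complex is fibrant, so $\mathcal{A}_{cf} = \mathcal{E}\text{-}{\rm dw}\widetilde{\mathcal{X}}$ and the projective--injective class is $\omega = \widetilde{\mathcal{X}}_\mathcal{E}$, yielding ${\rm Ho}(\mathcal{M}_{dw\mathcal{X}}) \simeq \mathcal{E}\text{-}{\rm dw}\widetilde{\mathcal{X}}/\widetilde{\mathcal{X}}_\mathcal{E}$ by \cite[Theorem 1.2.10]{Hov99}. The construction in Lemma \ref{lem:Xe}(2) (forming $X' = \bigoplus {\rm D}^{n+1}X_n$) uses only that each $X_n \in \mathcal{X}$, hence extends verbatim from $\mathcal{E}\text{-}{\rm dg}\widetilde{\mathcal{X}}$ to $\mathcal{E}\text{-}{\rm dw}\widetilde{\mathcal{X}}$; thus factoring through $\widetilde{\mathcal{X}}_\mathcal{E}$ coincides with null-homotopy, and the stable category is exactly ${\bf K}(\mathcal{X})$. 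Part (2) is obtained by dualising Propositions \ref{prop:(dwX)}, \ref{prop:(dwX,+)}, \ref{prop:(X,Ch)} and Lemmas \ref{lem:Xe}, \ref{lem:2Xe}. For the final ``in particular'' clause, any right orthogonal is automatically closed under direct products and any left orthogonal under direct sums, so under $(\mathcal{E}\text{-}{\rm dw}\widetilde{\mathcal{X}})^{\perp} = {}^{\perp}(\mathcal{E}\text{-}{\rm dw}\widetilde{\mathcal{Y}})$ the common class is closed under both operations, and the hypotheses of (1) and (2) hold simultaneously. I expect the most delicate step to be confirming the extension of Lemma \ref{lem:Xe}(2) to $\mathcal{E}\text{-}{\rm dw}\widetilde{\mathcal{X}}$, since otherwise the identification of the homotopy category with ${\bf K}(\mathcal{X})$ would break.
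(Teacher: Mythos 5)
Your proposal is correct, and its overall skeleton (assemble the Hovey triple from Propositions \ref{prop:(dwX,+)} and \ref{prop:(X,Ch)} together with Lemma \ref{lem:2Xe}, invoke \cite[Theorem 3.3]{Gil11}, then identify $\underline{\mathcal{A}_{cf}}=\mathcal{E}\text{-}{\rm dw}\widetilde{\mathcal{X}}/\widetilde{\mathcal{X}}_\mathcal{E}$ with ${\bf K}(\mathcal{X})$) matches the paper's. Where you genuinely diverge is in the one step that requires new work, the thickness of $(\mathcal{E}\text{-}{\rm dw}\widetilde{\mathcal{X}})^{\perp}$. The paper reduces to closure under kernels of admissible epimorphisms (using heredity of the cotorsion pair for the other direction) and then runs a lifting argument: given $0\to K\to C\xrightarrow{\alpha} D\to 0$ with $C,D$ in the orthogonal and a map $f\colon X\to D$, it embeds $X\rightarrowtail M\twoheadrightarrow X'$ with $M\in\widetilde{\mathcal{X}}_\mathcal{E}$ via the completeness established in Proposition \ref{prop:(dwX,+)}, lifts $f$ to $M$ using ${\rm Ext}^1(X',D)=0$, and lifts further through $\alpha$ using ${\rm Ext}^1(M,K)=0$ from Proposition \ref{thm:(dgX,E)}. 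You instead characterize membership in $(\mathcal{E}\text{-}{\rm dw}\widetilde{\mathcal{X}})^{\perp}$ as acyclicity of the Hom-complexes ${\rm Hom}_{\mathcal{A}}(X,-)$ (via Lemma \ref{lem:Gil04}, the identity ${\rm Ext}^1_{{\rm Ch}(\mathcal{E})}(X,-)={\rm Ext}^1_{dw}(X,-)$, and closure of $\mathcal{E}\text{-}{\rm dw}\widetilde{\mathcal{X}}$ under suspension), observe that any conflation of ${\rm Ch}(\mathcal{A},\mathcal{E})$ stays exact after applying ${\rm Hom}_{\mathcal{A}}(X,-)$, and read off all three two-out-of-three directions from the long exact homology sequence. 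Your route is more symmetric and does not consume the completeness of the cotorsion pair at this stage, whereas the paper's handles only the one missing closure direction but leans on machinery already in place. You also supply the detail behind the paper's ``it is clear that ${\rm Ho}(\mathcal{M}_{dw\mathcal{X}})\simeq{\bf K}(\mathcal{X})$,'' correctly noting that the factorization criterion of Lemma \ref{lem:Xe}(2) extends verbatim from $\mathcal{E}\text{-}{\rm dg}\widetilde{\mathcal{X}}$ to $\mathcal{E}\text{-}{\rm dw}\widetilde{\mathcal{X}}$ because the construction of $\bigoplus{\rm D}^{n+1}X_n$ uses only $X_n\in\mathcal{X}$; and your justification of the ``in particular'' clause via the automatic closure properties of left and right orthogonals (which rests on ${\rm Hom}$-complexes turning coproducts in the first variable and products in the second into products) is sound.
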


\begin{proof}
We claim that $(\mathcal{E}\text{-}{\rm dw}\widetilde{\mathcal{X}})^{\perp}$ is a thick subcategory under these assumptions. Since the cotorsion pair $(\mathcal{E}\text{-}{\rm dw}\widetilde{\mathcal{X}}, (\mathcal{E}\text{-}{\rm dw}\widetilde{\mathcal{X}})^{\perp})$ is hereditary, it suffices to prove that $(\mathcal{E}\text{-}{\rm dw}\widetilde{\mathcal{X}})^{\perp}$ is closed under taking kernels of admissible epimorphisms. That is, for any short exact sequence $0\rightarrow K\rightarrow C\stackrel{\alpha}\rightarrow D\rightarrow 0$ in ${\rm Ch}(\mathcal{A}, \mathcal{E})$ for which $C, D\in (\mathcal{E}\text{-}{\rm dw}\widetilde{\mathcal{X}})^{\perp}$, we need to show that $K\in (\mathcal{E}\text{-}{\rm dw}\widetilde{\mathcal{X}})^{\perp}$. It follows from $C, D\in (\mathcal{E}\text{-}{\rm dw}\widetilde{\mathcal{X}})^{\perp}\subseteq \widetilde{\mathcal{E}}$ that $K \in \widetilde{\mathcal{E}}$. It suffices to prove that for any $X\in \mathcal{E}\text{-}{\rm dw}\widetilde{\mathcal{X}}$, the morphism ${\rm Hom}_{{\rm Ch}(\mathcal{A})}( X, C)\rightarrow {\rm Hom}_{{\rm Ch}(\mathcal{A})}( X,D)$ induced by $\alpha$ is surjective.

For any chain map $f: X\rightarrow D$, by Proposition \ref{prop:(dwX,+)}, for $X$ there is a short exact sequence $0\rightarrow X\stackrel{\beta}\rightarrow M\rightarrow X'\rightarrow 0$ in ${\rm Ch}(\mathcal{A}, \mathcal{E})$, where $M\in\mathcal{E}\text{-}{\rm dw}\widetilde{\mathcal{X}} \cap (\mathcal{E}\text{-}{\rm dw}\widetilde{\mathcal{X}})^{\perp} = \widetilde{\mathcal{X}}_\mathcal{E} \subseteq \mathcal{E}\text{-}{\rm dg}\widetilde{\mathcal{X}}$ and $X'\in \mathcal{E}\text{-}{\rm dw}\widetilde{\mathcal{X}} $.
Thus we have the following diagram in ${\rm Ch}(\mathcal{A}, \mathcal{E})$ with a morphism $g: M\rightarrow D$ such that $g\beta = f$ by ${\rm Ext}^1_{{\rm Ch}(\mathcal{E})}(X', D) = 0$:
$$\xymatrix{ & &X \ \ar@{>->}[r]^{\beta}\ar[d]_{f} &M\ar@{-->}[ld]^g\ar@{-->}[lld]\ar@{->>}[r]  &X'\\
K\ \ar@{>->}[r] &C\ar@{->>}[r]^{\alpha} & D. & & }$$
Note that $K\in \widetilde{\mathcal{E}}$, it follows from Proposition \ref{thm:(dgX,E)} that ${\rm Ext}^1_{{\rm Ch}(\mathcal{E})}(M, K) = 0$, hence we get a morphism $h: M\rightarrow C$ such that $\alpha h = g$. Then we have $\alpha h \beta =g \beta =f$ which proves the above claim.

Then, the model structure $\mathcal{M}_{dw\mathcal{X}}$ follows immediately from \cite[Theorem 3.3]{Gil11}, Lemma \ref{lem:2Xe}, Propositions \ref{prop:(dwX,+)} and \ref{prop:(X,Ch)}. It is clear that ${\rm Ho}(\mathcal{M}_{dw\mathcal{X}}) \simeq {\bf K}(\mathcal{X})$. Dually, we obtain the model structure  $\mathcal{M}_{dw\mathcal{Y}}$ and  ${\rm Ho}(\mathcal{M}_{dw\mathcal{Y}}) \simeq {\bf K}(\mathcal{Y})$.
\end{proof}

\begin{theorem}\label{thm:KX=KY}
Assume that $(\mathcal{E}\text{-}{\rm dw}\widetilde{\mathcal{X}})^{\perp}= {}^{\perp}(\mathcal{E}\text{-}{\rm dw}\widetilde{\mathcal{Y}})$. Then there is a triangle-equivalence $${\bf K}(\mathcal{X}) \simeq {\bf K}(\mathcal{Y}).$$
\end{theorem}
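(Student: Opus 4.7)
The plan is to induce the desired equivalence from a Quillen equivalence between $\mathcal{M}_{dw\mathcal{X}}$ and $\mathcal{M}_{dw\mathcal{Y}}$, using the identity adjoint pair on the common underlying exact category ${\rm Ch}(\mathcal{A}, \mathcal{E})$. To start, I would note that both model structures exist under our hypothesis: setting $\mathcal{W} := (\mathcal{E}\text{-}{\rm dw}\widetilde{\mathcal{X}})^{\perp} = {}^{\perp}(\mathcal{E}\text{-}{\rm dw}\widetilde{\mathcal{Y}})$, the class $\mathcal{W}$ is simultaneously a right ${\rm Ext}^{1}_{{\rm Ch}(\mathcal{E})}$-orthogonal (hence closed under direct products) and a left ${\rm Ext}^{1}_{{\rm Ch}(\mathcal{E})}$-orthogonal (hence closed under direct sums), so both parts of Theorem \ref{thm:M5} apply and yield triangle-equivalences ${\rm Ho}(\mathcal{M}_{dw\mathcal{X}}) \simeq {\bf K}(\mathcal{X})$ and ${\rm Ho}(\mathcal{M}_{dw\mathcal{Y}}) \simeq {\bf K}(\mathcal{Y})$.

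Next, I would check that $\mathrm{Id} \colon \mathcal{M}_{dw\mathcal{X}} \to \mathcal{M}_{dw\mathcal{Y}}$ is a left Quillen functor, with right adjoint $\mathrm{Id}$. Under the exact-category description of (trivial) (co)fibrations in Gillespie's correspondence, cofibrations of $\mathcal{M}_{dw\mathcal{X}}$ are admissible monomorphisms with cokernel in $\mathcal{E}\text{-}{\rm dw}\widetilde{\mathcal{X}} \subseteq {\rm Ch}(\mathcal{A})$, so they are cofibrations of $\mathcal{M}_{dw\mathcal{Y}}$. A trivial cofibration of $\mathcal{M}_{dw\mathcal{X}}$ has cokernel in $\mathcal{E}\text{-}{\rm dw}\widetilde{\mathcal{X}} \cap \mathcal{W} = \widetilde{\mathcal{X}}_{\mathcal{E}}$ by Lemma \ref{lem:2Xe}; since complexes in $\widetilde{\mathcal{X}}_{\mathcal{E}}$ are contractible, they lie in $\mathcal{W} = {}^{\perp}(\mathcal{E}\text{-}{\rm dw}\widetilde{\mathcal{Y}})$, which makes such a map also a trivial cofibration of $\mathcal{M}_{dw\mathcal{Y}}$. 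A symmetric verification shows that $\mathrm{Id} \colon \mathcal{M}_{dw\mathcal{Y}} \to \mathcal{M}_{dw\mathcal{X}}$ is also a left Quillen functor.

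To upgrade this Quillen adjunction to a Quillen equivalence, it suffices to show that the two model structures share the same class of weak equivalences. A standard argument for exact Hovey model structures characterizes the weak equivalences purely in terms of the thick class of trivial objects: a morphism $f \colon A \to B$ is a weak equivalence if and only if it admits a factorization $A \rightarrowtail C \twoheadrightarrow B$ in ${\rm Ch}(\mathcal{A}, \mathcal{E})$ with both $\mathrm{coker}(A \to C)$ and $\ker(C \to B)$ in $\mathcal{W}$. The ``only if'' direction is just the standard trivial-cofibration/trivial-fibration factorization, while the ``if'' direction uses $2$-out-of-$3$ applied to the admissible short exact sequences $A \rightarrowtail C \twoheadrightarrow \mathrm{coker}(A \to C)$ and $\ker(C \to B) \rightarrowtail C \twoheadrightarrow B$ to see that both factors, and hence $f$ itself, are weak equivalences. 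Since this criterion depends only on $\mathcal{W}$, the classes of weak equivalences of $\mathcal{M}_{dw\mathcal{X}}$ and $\mathcal{M}_{dw\mathcal{Y}}$ coincide, so $\mathrm{Id}$ is a Quillen equivalence and induces a triangle-equivalence ${\rm Ho}(\mathcal{M}_{dw\mathcal{X}}) \simeq {\rm Ho}(\mathcal{M}_{dw\mathcal{Y}})$. Concatenating with Theorem \ref{thm:M5} then delivers ${\bf K}(\mathcal{X}) \simeq {\bf K}(\mathcal{Y})$. The main subtlety in this argument is precisely the $\mathcal{W}$-only characterization of weak equivalences; once that is in hand, the remaining verifications reduce to bookkeeping with the explicit description of (trivial) (co)fibrations afforded by Theorem \ref{thm:M5}.
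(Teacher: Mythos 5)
Your proposal is correct and follows essentially the same route as the paper: both obtain the two model structures from Theorem \ref{thm:M5}, observe that the hypothesis $(\mathcal{E}\text{-}{\rm dw}\widetilde{\mathcal{X}})^{\perp}= {}^{\perp}(\mathcal{E}\text{-}{\rm dw}\widetilde{\mathcal{Y}})$ makes them share the same trivial objects, and conclude that the identity adjunction is a Quillen equivalence inducing ${\rm Ho}(\mathcal{M}_{dw\mathcal{X}})\simeq {\rm Ho}(\mathcal{M}_{dw\mathcal{Y}})$. The only difference is that the paper delegates the last step to \cite[Corollary 1.4]{GLZ24}, whereas you verify it directly via the standard characterization of weak equivalences in terms of the thick class of trivial objects.
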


\begin{proof}
The model structures $\mathcal{M}_{dw\mathcal{X}}$ and $\mathcal{M}_{dw\mathcal{Y}}$ come from Theorem \ref{thm:M5}. Since $(\mathcal{E}\text{-}{\rm dw}\widetilde{\mathcal{X}})^{\perp}= {}^{\perp}(\mathcal{E}\text{-}{\rm dw}\widetilde{\mathcal{Y}})$, they share the same trivial objects. Then we get equivalences of the triangulated categories $${\bf K}(\mathcal{X}) \simeq {\rm Ho}(\mathcal{M}_{dw\mathcal{X}})\simeq {\rm Ho}(\mathcal{M}_{dw\mathcal{Y}})\simeq {\bf K}(\mathcal{Y})$$
by \cite[Corollary 1.4]{GLZ24}.
\end{proof}

We show that the next result recovers \cite[Theorem A]{Chen10} by Quillen equivalences.

\begin{corollary}
If $\mathcal{X}\text{-}{\rm dim}\mathcal{A} ( = \mathcal{Y}\text{-}{\rm codim}\mathcal{A}) <\infty$, then ${\bf K}(\mathcal{X})\simeq {\bf K}(\mathcal{Y})$.
\end{corollary}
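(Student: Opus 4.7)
The plan is to reduce to Theorem~\ref{thm:KX=KY} by establishing the orthogonality equality $(\mathcal{E}\text{-}{\rm dw}\widetilde{\mathcal{X}})^{\perp}=\widetilde{\mathcal{E}}={}^{\perp}(\mathcal{E}\text{-}{\rm dw}\widetilde{\mathcal{Y}})$ under the finite dimension hypothesis. The inclusions $(\mathcal{E}\text{-}{\rm dw}\widetilde{\mathcal{X}})^{\perp}\subseteq \widetilde{\mathcal{E}}$ and ${}^{\perp}(\mathcal{E}\text{-}{\rm dw}\widetilde{\mathcal{Y}})\subseteq \widetilde{\mathcal{E}}$ are automatic from $\mathcal{E}\text{-}{\rm dg}\widetilde{\mathcal{X}}\subseteq \mathcal{E}\text{-}{\rm dw}\widetilde{\mathcal{X}}$ together with Proposition~\ref{thm:(dgX,E)} and its dual. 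I will obtain the reverse inclusions from the stronger assertion $\mathcal{E}\text{-}{\rm dw}\widetilde{\mathcal{X}}=\mathcal{E}\text{-}{\rm dg}\widetilde{\mathcal{X}}$ (and dually for $\mathcal{Y}$), which I expect to hold precisely because finite $\mathcal{X}$-resolution dimension prevents any new "degreewise but not dg" complex from appearing.

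To establish this identity, take any $X\in \mathcal{E}\text{-}{\rm dw}\widetilde{\mathcal{X}}$ and apply Lemma~\ref{lem:dgX-ap} to produce a short exact sequence $0\to E\to X'\to X\to 0$ in ${\rm Ch}(\mathcal{A},\mathcal{E})$ with $X'\in \mathcal{E}\text{-}{\rm dg}\widetilde{\mathcal{X}}$ and $E\in \widetilde{\mathcal{E}}$. Since $X_n, X'_n\in \mathcal{X}$ and $\mathcal{X}$ is closed under kernels of admissible epimorphisms (Proposition~\ref{prop:(dwX)}), each $E_n\in \mathcal{X}$, so $E$ is a right $\mathcal{X}$-acyclic complex of objects in $\mathcal{X}$. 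The key step is to upgrade this to $E\in \widetilde{\mathcal{X}}_{\mathcal{E}}$: for any $n$, the truncation $\cdots\to E_{n+2}\to E_{n+1}\to {\rm Z}_nE\to 0$ is a proper $\mathcal{X}$-resolution of ${\rm Z}_nE$, and the hypothesis $\mathcal{X}\text{-}{\rm dim}({\rm Z}_nE)\le d$ together with a generalized Schanuel argument (using heredity of $\mathcal{X}$) forces the $d$-th syzygy ${\rm Z}_{n+d}E$ to lie in $\mathcal{X}$; as $n$ is arbitrary, all cycles ${\rm Z}_mE$ are in $\mathcal{X}$.

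By Lemma~\ref{lem:Xe}(1), $\widetilde{\mathcal{X}}_{\mathcal{E}}=\mathcal{E}\text{-}{\rm dg}\widetilde{\mathcal{X}}\cap \widetilde{\mathcal{E}}$, so $E$ is contractible. The chosen sequence is moreover degreewise split, since right $\mathcal{X}$-acyclicity of each degree and $X_n\in \mathcal{X}$ allows ${\rm Id}_{X_n}$ to lift; hence it represents a class in ${\rm Ext}_{dw}^1(X,E)$, which by Lemma~\ref{lem:Gil04} equals ${\rm H}_{-1}{\rm Hom}_\mathcal{A}(X,E)$. This vanishes since ${\rm Hom}_\mathcal{A}(X,E)$ is acyclic whenever $E$ is contractible, so the sequence splits, making $X$ a direct summand of $X'\in \mathcal{E}\text{-}{\rm dg}\widetilde{\mathcal{X}}$. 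The defining property of $\mathcal{E}\text{-}{\rm dg}\widetilde{\mathcal{X}}$ is manifestly closed under summands, giving $X\in \mathcal{E}\text{-}{\rm dg}\widetilde{\mathcal{X}}$. Taking right orthogonals and invoking Proposition~\ref{thm:(dgX,E)} yields $(\mathcal{E}\text{-}{\rm dw}\widetilde{\mathcal{X}})^{\perp}=\widetilde{\mathcal{E}}$; dually ${}^{\perp}(\mathcal{E}\text{-}{\rm dw}\widetilde{\mathcal{Y}})=\widetilde{\mathcal{E}}$, and Theorem~\ref{thm:KX=KY} delivers ${\bf K}(\mathcal{X})\simeq {\bf K}(\mathcal{Y})$.

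The main obstacle is the generalized Schanuel step, namely showing that along a proper $\mathcal{X}$-resolution of an object of $\mathcal{X}$-resolution dimension at most $d$, the $d$-th kernel is already in $\mathcal{X}$. This depends essentially on closure of $\mathcal{X}$ under kernels of admissible epimorphisms in $(\mathcal{A},\mathcal{E})$, which is exactly the heredity recorded in Proposition~\ref{prop:(dwX)}; once this input is in hand, the rest of the argument is a clean splicing of the previously established lemmas.
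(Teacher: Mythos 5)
Your proposal is correct and follows essentially the same route as the paper: the paper's proof simply cites Remark \ref{rem:1} for the identity $\mathcal{E}\text{-}{\rm dg}\widetilde{\mathcal{X}} = \mathcal{E}\text{-}{\rm dw}\widetilde{\mathcal{X}}$ (and its dual), whence both orthogonals equal $\widetilde{\mathcal{E}}$ by Proposition \ref{thm:(dgX,E)} and Theorem \ref{thm:KX=KY} applies. Your argument re-proves the content of that remark — the key step in both versions being that finite $\mathcal{X}$-dimension forces the cycles of a right $\mathcal{X}$-acyclic complex of $\mathcal{X}$-objects into $\mathcal{X}$ — differing only in that you split the short exact sequence from Lemma \ref{lem:dgX-ap} rather than working with the mapping cone as the remark does.
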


\begin{proof}
It follows from Remark \ref{rem:1} that if $\mathcal{X}\text{-}{\rm dim}\mathcal{A} <\infty$, then $\mathcal{E}\text{-}{\rm dg}\widetilde{\mathcal{X}} = \mathcal{E}\text{-}{\rm dw}\widetilde{\mathcal{X}}$. Dually, we have $\mathcal{E}\text{-}{\rm dg}\widetilde{\mathcal{Y}} = \mathcal{E}\text{-}{\rm dw}\widetilde{\mathcal{Y}}$. The remaining assertions are immediate from Theorem \ref{thm:KX=KY}, Proposition \ref{thm:(dgX,E)} and its dual.
\end{proof}

\section{Applications}

\subsection{Cotorsion triples}

Throughout this subsection, we suppose that $\mathcal{A}$ is a complete and cocomplete abelian category and that $(\mathcal{X}, \mathcal{Z}, \mathcal{Y})$ is a complete and hereditary cotorsion triple in $\mathcal{A}$. This implies that $(\mathcal{X}, \mathcal{Y})$ is an admissible balanced pair in $\mathcal{A}$. Denote by $\mathcal{P}$ and $\mathcal{I}$ the subcategories of projective and injective objects of $\mathcal{A}$, respectively.
By \cite[Theorem 4.4 and Proposition 3.2]{EPZ20}, $\mathcal{A}$ has enough projectives and injectives such that $\mathcal{X}\cap\mathcal{Z} = \mathcal{P}$ and $\mathcal{Z}\cap\mathcal{Y} = \mathcal{I}$.

Let $\widetilde{\mathcal{Z}}$ be the class of acyclic complexes $Z$ with all ${\rm Z}_nZ \in \mathcal{Z}$. Denote by ${\rm dg}\widetilde{\mathcal{X}}$ the class of complexes $X\in \mathcal{E}\text{-}{\rm dw}\widetilde{\mathcal{X}}$ for which every map $X\rightarrow Z$ is null homotopic whenever $Z \in \widetilde{\mathcal{Z}}$, and by ${\rm dg}\widetilde{\mathcal{Y}}$ the class of complexes $Y\in \mathcal{E}\text{-}{\rm dw}\widetilde{\mathcal{Y}}$ for which every map $Z\rightarrow Y$ is null homotopic whenever $Z \in \widetilde{\mathcal{Z}}$. As shown by Gillespie \cite[Corollary 3.13]{Gil04}, both $({\rm dg}\widetilde{\mathcal{X}}, \widetilde{\mathcal{Z}})$ and $(\widetilde{\mathcal{Z}}, {\rm dg}\widetilde{\mathcal{Y}})$ are hereditary cotorsion pairs in ${\rm Ch}(\mathcal{A})$.

It follows from the definition that $\widetilde{\mathcal{Z}}\subseteq \widetilde{\mathcal{E}}$, thus $\mathcal{E}\text{-}{\rm dg}\widetilde{\mathcal{X}}\subseteq {\rm dg}\widetilde{\mathcal{X}}$. For applications, we would point out the following observations:

\begin{lemma}
Any complex in $\widetilde{\mathcal{E}} \cap {\rm dg}\widetilde{\mathcal{X}}$ is contractible.
\end{lemma}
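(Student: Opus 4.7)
The plan is to reduce to showing $X\in\widetilde{\mathcal{X}}_{\mathcal{E}}$, since Lemma \ref{lem:Xe}(1) then immediately produces an isomorphism $X\cong\bigoplus {\rm D}^nX_n'$ and hence a contracting homotopy. Because $X\in{\rm dg}\widetilde{\mathcal{X}}\subseteq\mathcal{E}\text{-}{\rm dw}\widetilde{\mathcal{X}}$, every component $X_n$ is in $\mathcal{X}$, and because $X\in\widetilde{\mathcal{E}}$, the cycle sequences $0\to {\rm Z}_nX\to X_n\to {\rm Z}_{n-1}X\to 0$ all lie in $\mathcal{E}$; moreover $X$ is genuinely acyclic, since the hypothesis of a complete and hereditary cotorsion triple forces $\mathcal{A}$ to have enough projectives, all of which lie in $\mathcal{X}={}^{\perp}\mathcal{Z}$. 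Consequently, the only remaining ingredient is that each cycle ${\rm Z}_nX$ belongs to $\mathcal{X}$.

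For this I would invoke the complete hereditary cotorsion pair $({\rm dg}\widetilde{\mathcal{X}},\widetilde{\mathcal{Z}})$ in ${\rm Ch}(\mathcal{A})$ recalled just before the statement (from \cite[Corollary 3.8]{Gil04}). Its completeness provides a short exact sequence $0\to Z\to Y\to X\to 0$ in ${\rm Ch}(\mathcal{A})$ with $Y\in{\rm dg}\widetilde{\mathcal{X}}$ and $Z\in\widetilde{\mathcal{Z}}$. The orthogonality of the pair, applied to $X\in{\rm dg}\widetilde{\mathcal{X}}$ and $Z\in\widetilde{\mathcal{Z}}$, gives ${\rm Ext}^1_{{\rm Ch}(\mathcal{A})}(X,Z)=0$, so the sequence splits and $Y\cong X\oplus Z$. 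Since both $X$ and $Z$ are acyclic, so is $Y$. Gillespie's identification of the trivially cofibrant part of the derived model structure associated with $(\mathcal{X},\mathcal{Z})$ gives ${\rm dg}\widetilde{\mathcal{X}}\cap\mathcal{W}=\widetilde{\mathcal{X}}$, where $\mathcal{W}$ denotes the acyclic complexes and $\widetilde{\mathcal{X}}$ the acyclic complexes with all cycles in $\mathcal{X}$; hence $Y\in\widetilde{\mathcal{X}}$ and ${\rm Z}_nY\in\mathcal{X}$ for every $n$. Writing ${\rm Z}_nY\cong {\rm Z}_nX\oplus {\rm Z}_nZ$ and using that $\mathcal{X}={}^{\perp}\mathcal{Z}$ is closed under direct summands, we obtain ${\rm Z}_nX\in\mathcal{X}$, which completes the reduction and thus the argument.

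The main obstacle I anticipate is invoking the identification ${\rm dg}\widetilde{\mathcal{X}}\cap\mathcal{W}=\widetilde{\mathcal{X}}$ precisely in the Grothendieck setting used here; although this is standard in Gillespie's framework, care is needed to cite the correct statement. If a verbatim reference is inconvenient, the same conclusion can be reached by using the special $\mathcal{Z}$-preenvelopes $0\to {\rm Z}_{n-1}X\to C_n\to X_n'\to 0$ supplied by the complete cotorsion pair $(\mathcal{X},\mathcal{Z})$ to assemble, from these pieces, an explicit complex $Z'\in\widetilde{\mathcal{Z}}$ together with a chain map $X\to Z'$ whose null-homotopy (guaranteed by $X\in{\rm dg}\widetilde{\mathcal{X}}$) directly supplies sections of the cycle sequences of $X$.
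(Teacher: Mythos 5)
Your argument is correct and essentially coincides with the paper's proof: both reduce the statement to showing ${\rm Z}_nX\in\mathcal{X}$, obtain this from Gillespie's identification of ${\rm dg}\widetilde{\mathcal{X}}$ intersected with the acyclic complexes as $\widetilde{\mathcal{X}}$ (\cite[Theorem 3.12]{Gil04}), and then split the right $\mathcal{X}$-acyclic cycle sequences $0\rightarrow {\rm Z}_nX\rightarrow X_n\rightarrow {\rm Z}_{n-1}X\rightarrow 0$. Your detour through a special precover $0\rightarrow Z\rightarrow Y\rightarrow X\rightarrow 0$ is superfluous (and quietly assumes completeness of $({\rm dg}\widetilde{\mathcal{X}},\widetilde{\mathcal{Z}})$, which is not needed): since $X$ itself is acyclic and already lies in ${\rm dg}\widetilde{\mathcal{X}}$, the identification applies to $X$ directly.
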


\begin{proof}
For a complex $X$ in $\widetilde{\mathcal{E}} \cap {\rm dg}\widetilde{\mathcal{X}}$, $X$ is acyclic, thus by \cite[Theorem 3.12]{Gil04}, $X \in \widetilde{\mathcal{X}}$. Hence for any $n$, ${\rm Z}_nX\in \mathcal{X}$ and the right $\mathcal{X}$-acyclic sequence
$$0\rightarrow {\rm Z}_nX\rightarrow X\rightarrow {\rm Z}_{n-1}X\rightarrow 0$$
splits, as required.
\end{proof}

\begin{lemma}\label{dg=dwthenedg=edw}
${\rm dg}\widetilde{\mathcal{X}} = \mathcal{E}\text{-}{\rm dw}\widetilde{\mathcal{X}}$ if and only if $\mathcal{E}\text{-}{\rm dg}\widetilde{\mathcal{X}} = \mathcal{E}\text{-}{\rm dw}\widetilde{\mathcal{X}}$.
\end{lemma}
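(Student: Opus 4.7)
The claim amounts to the chain of inclusions
\[
\mathcal{E}\text{-}{\rm dg}\widetilde{\mathcal{X}} \;\subseteq\; {\rm dg}\widetilde{\mathcal{X}} \;\subseteq\; \mathcal{E}\text{-}{\rm dw}\widetilde{\mathcal{X}}
\]
collapsing to equalities. The first inclusion holds because any chain map from an object of $\mathcal{E}\text{-}{\rm dg}\widetilde{\mathcal{X}}$ into a complex of $\widetilde{\mathcal{E}}$ is null homotopic and $\widetilde{\mathcal{Z}}\subseteq \widetilde{\mathcal{E}}$; the second is by definition of ${\rm dg}\widetilde{\mathcal{X}}$. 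The ($\Leftarrow$) direction is immediate: if the outer classes coincide, the middle one is sandwiched between equal classes.

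For ($\Rightarrow$), assume ${\rm dg}\widetilde{\mathcal{X}}=\mathcal{E}\text{-}{\rm dw}\widetilde{\mathcal{X}}$ and let $X\in \mathcal{E}\text{-}{\rm dw}\widetilde{\mathcal{X}}$. I will invoke the complete cotorsion pair $(\mathcal{E}\text{-}{\rm dg}\widetilde{\mathcal{X}},\widetilde{\mathcal{E}})$ from Proposition \ref{thm:(dgX,E)} to produce a short exact sequence
\[
0\longrightarrow E\longrightarrow X'\longrightarrow X\longrightarrow 0
\]
in ${\rm Ch}(\mathcal{A},\mathcal{E})$ with $X'\in \mathcal{E}\text{-}{\rm dg}\widetilde{\mathcal{X}}$ and $E\in \widetilde{\mathcal{E}}$. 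Since each $X_n\in \mathcal{X}$ and the sequence is degreewise right $\mathcal{X}$-acyclic, applying ${\rm Hom}_{\mathcal{A}}(X_n,-)$ yields a split exact sequence at each degree, so each $E_n$ is a direct summand of $X'_n\in \mathcal{X}$. Using that $\mathcal{X}={}^{\perp}\mathcal{Z}$ is closed under direct summands, this forces $E_n\in \mathcal{X}$ for every $n$, i.e., $E\in \mathcal{E}\text{-}{\rm dw}\widetilde{\mathcal{X}}$.

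By hypothesis $E\in \mathcal{E}\text{-}{\rm dw}\widetilde{\mathcal{X}}={\rm dg}\widetilde{\mathcal{X}}$, and also $E\in \widetilde{\mathcal{E}}$, so the lemma immediately preceding this one shows that $E$ is contractible. The class of the degreewise split extension in $\mathrm{Ext}^1_{dw}(X,E)\cong \mathrm{Hom}_{{\bf K}(\mathcal{A})}(X,\Sigma E)$ (Lemma \ref{lem:Gil04}) is therefore zero and the sequence splits in ${\rm Ch}(\mathcal{A})$; consequently $X$ is a direct summand of $X'\in \mathcal{E}\text{-}{\rm dg}\widetilde{\mathcal{X}}$. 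A short verification shows that $\mathcal{E}\text{-}{\rm dg}\widetilde{\mathcal{X}}$ is closed under direct summands (a chain map $A\to E'$ with $E'\in \widetilde{\mathcal{E}}$ extends by zero to $A\oplus B\to E'$, whose null homotopy restricts to one on $A$, and $\mathcal{X}$ is closed under summands), so $X\in \mathcal{E}\text{-}{\rm dg}\widetilde{\mathcal{X}}$.

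The only delicate point is the degreewise-splitting step that promotes $X_n\in \mathcal{X}$ to $E_n\in \mathcal{X}$; this is precisely where the cotorsion-triple hypothesis (closure of $\mathcal{X}$ under direct summands) intervenes and unlocks the preceding contractibility lemma. Everything else is formal.
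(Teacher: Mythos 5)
Your proof is correct and follows essentially the same route as the paper: both directions hinge on the sandwich $\mathcal{E}\text{-}{\rm dg}\widetilde{\mathcal{X}}\subseteq{\rm dg}\widetilde{\mathcal{X}}\subseteq\mathcal{E}\text{-}{\rm dw}\widetilde{\mathcal{X}}$, and the forward direction uses the complete cotorsion pair $(\mathcal{E}\text{-}{\rm dg}\widetilde{\mathcal{X}},\widetilde{\mathcal{E}})$ to write $0\to E\to X'\to X\to 0$ and then the preceding lemma to see that $E\in\widetilde{\mathcal{E}}\cap{\rm dg}\widetilde{\mathcal{X}}$ is contractible. The only cosmetic difference is that you place $E$ in ${\rm dg}\widetilde{\mathcal{X}}$ via degreewise splitting plus the hypothesis and then split the sequence outright, whereas the paper gets there via heredity of $({\rm dg}\widetilde{\mathcal{X}},\widetilde{\mathcal{Z}})$ and concludes with a homotopy equivalence $X\cong X'$; both are valid.
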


\begin{proof}
It suffices to prove that $\mathcal{E}\text{-}{\rm dw}\widetilde{\mathcal{X}}\subseteq \mathcal{E}\text{-}{\rm dg}\widetilde{\mathcal{X}}$ in the case of ${\rm dg}\widetilde{\mathcal{X}} = \mathcal{E}\text{-}{\rm dw}\widetilde{\mathcal{X}}$. For any $X\in \mathcal{E}\text{-}{\rm dw}\widetilde{\mathcal{X}} = {\rm dg}\widetilde{\mathcal{X}}$, by Proposition \ref{thm:(dgX,E)} we get a short exact sequence $0\rightarrow K\rightarrow L\rightarrow X\rightarrow 0$ in ${\rm Ch}(\mathcal{A}, \mathcal{E})$ with $L\in \mathcal{E}\text{-}{\rm dg}\widetilde{\mathcal{X}}\subseteq {\rm dg}\widetilde{\mathcal{X}}$ and $K \in \widetilde{\mathcal{E}}$. Since $({\rm dg}\widetilde{\mathcal{X}}, \widetilde{\mathcal{Z}})$ is a hereditary cotorsion pair, $K \in \widetilde{\mathcal{E}} \cap {\rm dg}\widetilde{\mathcal{X}}$ is contractible, which implies $X \cong L$ in ${\rm K}(\mathcal{A})$ and thus in $\mathcal{E}\text{-}{\rm dg}\widetilde{\mathcal{X}}$.
\end{proof}

An analogous dual statement is that $\mathcal{E}\text{-}{\rm dg}\widetilde{\mathcal{Y}} = \mathcal{E}\text{-}{\rm dw}\widetilde{\mathcal{Y}}$ if and only if ${\rm dg}\widetilde{\mathcal{Y}} = \mathcal{E}\text{-}{\rm dw}\widetilde{\mathcal{Y}}$. The following corollary shows that the triangle-equivalence in Theorem \ref{thm:KX=KY} can be restricted to ${\bf K}(\mathcal{P}) \simeq {\bf K}(\mathcal{I})$.

\begin{corollary}\label{cor:KP=KI}If $\mathcal{E}\text{-}{\rm dg}\widetilde{\mathcal{X}} = \mathcal{E}\text{-}{\rm dw}\widetilde{\mathcal{X}}$ and $\mathcal{E}\text{-}{\rm dg}\widetilde{\mathcal{Y}} = \mathcal{E}\text{-}{\rm dw}\widetilde{\mathcal{Y}}$, then we have a triangle-equivalence ${\bf K}(\mathcal{X}) \simeq {\bf K}(\mathcal{Y})$, which restricts to a triangle-equivalence ${\bf K}(\mathcal{P}) \simeq {\bf K}(\mathcal{I})$.
\end{corollary}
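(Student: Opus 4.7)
The plan splits according to the two assertions. For ${\bf K}(\mathcal{X}) \simeq {\bf K}(\mathcal{Y})$ the strategy is to reduce to Theorem \ref{thm:KX=KY}, so it suffices to verify the orthogonality condition $(\mathcal{E}\text{-}{\rm dw}\widetilde{\mathcal{X}})^{\perp} = {}^{\perp}(\mathcal{E}\text{-}{\rm dw}\widetilde{\mathcal{Y}})$. Under $\mathcal{E}\text{-}{\rm dg}\widetilde{\mathcal{X}} = \mathcal{E}\text{-}{\rm dw}\widetilde{\mathcal{X}}$ the statement of Proposition \ref{thm:(dgX,E)} upgrades to a complete cotorsion pair $(\mathcal{E}\text{-}{\rm dw}\widetilde{\mathcal{X}}, \widetilde{\mathcal{E}})$ in ${\rm Ch}(\mathcal{A}, \mathcal{E})$, forcing $(\mathcal{E}\text{-}{\rm dw}\widetilde{\mathcal{X}})^{\perp} = \widetilde{\mathcal{E}}$; the dual of Proposition \ref{thm:(dgX,E)} applied under $\mathcal{E}\text{-}{\rm dg}\widetilde{\mathcal{Y}} = \mathcal{E}\text{-}{\rm dw}\widetilde{\mathcal{Y}}$ similarly gives ${}^{\perp}(\mathcal{E}\text{-}{\rm dw}\widetilde{\mathcal{Y}}) = \widetilde{\mathcal{E}}$. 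The two orthogonals coincide, and Theorem \ref{thm:KX=KY} delivers the desired triangle-equivalence.

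For the restriction to ${\bf K}(\mathcal{P}) \simeq {\bf K}(\mathcal{I})$, I first record that any projective $P$ satisfies ${\rm Ext}^1_\mathcal{A}(P, -) = 0$, so $P \in {}^{\perp}\mathcal{Z} = \mathcal{X}$ and $P \in {}^{\perp}\mathcal{Y} = \mathcal{Z}$; dually $\mathcal{I} \subseteq \mathcal{Z} \cap \mathcal{Y}$. Consequently ${\rm dw}\widetilde{\mathcal{P}} \subseteq \mathcal{E}\text{-}{\rm dw}\widetilde{\mathcal{X}}$ and ${\rm dw}\widetilde{\mathcal{I}} \subseteq \mathcal{E}\text{-}{\rm dw}\widetilde{\mathcal{Y}}$, so ${\bf K}(\mathcal{P})$ and ${\bf K}(\mathcal{I})$ embed as fully faithful triangulated subcategories of ${\bf K}(\mathcal{X})$ and ${\bf K}(\mathcal{Y})$ respectively. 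Because the two model structures share the trivial class $\widetilde{\mathcal{E}}$ on the same underlying exact category, the equivalence of Theorem \ref{thm:KX=KY} is induced by the identity functor together with fibrant replacement in $\mathcal{M}_{dw\mathcal{Y}}$: it sends a cofibrant $X \in \mathcal{E}\text{-}{\rm dw}\widetilde{\mathcal{X}}$ to any $F$ fitting in a short exact sequence $0 \to X \to F \to T \to 0$ in ${\rm Ch}(\mathcal{A}, \mathcal{E})$ with $F \in \mathcal{E}\text{-}{\rm dw}\widetilde{\mathcal{Y}}$ and $T \in \widetilde{\mathcal{E}}$. The task then reduces to showing that when $X = P \in {\rm dw}\widetilde{\mathcal{P}}$ such an $F$ can be chosen to lie in ${\rm dw}\widetilde{\mathcal{I}}$, and symmetrically for complexes of injectives.

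To construct $F$ I would take a Cartan--Eilenberg style injective coresolution of $P$. The essential input is that $\mathcal{P} \subseteq \mathcal{Z}$ together with the heredity of $(\mathcal{X}, \mathcal{Z})$ forces ${\rm Ext}^{\geq 1}_{\mathcal{A}}(\mathcal{X}, P_n) = 0$ for every component $P_n$; hence any injective coresolution $0 \to P_n \to I^0_n \to I^1_n \to \cdots$ is assembled from short exact sequences that are automatically right $\mathcal{X}$-acyclic (i.e., in $\mathcal{E}$) and whose cokernels remain in $\mathcal{Z}$. Totalizing the resulting double complex produces $F \in {\rm dw}\widetilde{\mathcal{I}}$ and an admissible monomorphism $P \rightarrowtail F$ in ${\rm Ch}(\mathcal{A}, \mathcal{E})$ with cokernel $T$ acyclic and having cycles in $\mathcal{Z}$, so $T \in \widetilde{\mathcal{E}}$; the symmetric construction handles the inverse direction. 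The main obstacle is the rigorous bookkeeping of the totalization to propagate the ${\rm Ext}$-vanishing to each cycle of $T$; this filtration/spectral-sequence argument is the technical heart of the proof, after which the restriction follows formally from the description of the equivalence above.
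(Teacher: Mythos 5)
Your reduction of the first assertion to Theorem \ref{thm:KX=KY} is correct and is exactly what the paper intends: under the hypotheses, Proposition \ref{thm:(dgX,E)} and its dual give $(\mathcal{E}\text{-}{\rm dw}\widetilde{\mathcal{X}})^{\perp}=\widetilde{\mathcal{E}}={}^{\perp}(\mathcal{E}\text{-}{\rm dw}\widetilde{\mathcal{Y}})$. Your description of the induced equivalence as ``identity plus (co)fibrant replacement,'' so that the restriction statement reduces to finding, for a complex $P$ of projectives, a short exact sequence $0\to P\to F\to T\to 0$ in ${\rm Ch}(\mathcal{A},\mathcal{E})$ with $F$ degreewise injective and $T\in\widetilde{\mathcal{E}}$, is also the right framework.

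The gap is in the construction of $F$. You propose a Cartan--Eilenberg injective coresolution of the \emph{unbounded} complex $P$ followed by totalization, and you explicitly defer the verification that the cokernel $T$ is right $\mathcal{X}$-acyclic, calling it ``the technical heart.'' That deferred step is precisely where the difficulty lives: the totalization in the coresolution direction involves infinite products, and showing that ${\rm Hom}_{\mathcal{A}}(X,T)$ is acyclic for every $X\in\mathcal{X}$ requires an acyclic-assembly/$\varprojlim^{1}$ argument for half-plane double complexes which the proposal neither states nor proves; as written, nothing guarantees $T\in\widetilde{\mathcal{E}}$. The paper sidesteps totalization entirely: it applies the completeness of Gillespie's cotorsion pair $(\widetilde{\mathcal{Z}},{\rm dg}\widetilde{\mathcal{Y}})$ in ${\rm Ch}(\mathcal{A})$ (\cite[Theorem 4.22]{SS11}) to get $0\to M\to Y\to L\to 0$ with $Y\in{\rm dg}\widetilde{\mathcal{Y}}$ and $L\in\widetilde{\mathcal{Z}}\subseteq\widetilde{\mathcal{E}}$, notes that this sequence lies in ${\rm Ch}(\mathcal{A},\mathcal{E})$ because $M_i\in\mathcal{P}\subseteq\mathcal{Z}=\mathcal{X}^{\perp}$, and then --- the step your proposal has no analogue of --- concludes that $Y_i$ is injective from $Y_i\in\mathcal{Z}\cap\mathcal{Y}=\mathcal{I}$, since $M_i$ and $L_i$ both lie in $\mathcal{Z}$ and $\mathcal{Z}$ is closed under extensions. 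So either carry out the acyclic-assembly argument in full (checking it only after applying ${\rm Hom}_{\mathcal{A}}(X,-)$, where it becomes a statement about double complexes of abelian groups), or, better, replace the Cartan--Eilenberg construction by the special ${\rm dg}\widetilde{\mathcal{Y}}$-preenvelope and exploit the identity $\mathcal{Z}\cap\mathcal{Y}=\mathcal{I}$ coming from the cotorsion triple.
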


\begin{proof} Since $(\mathcal{E}\text{-}{\rm dg}\widetilde{\mathcal{X}})^{\perp} = \widetilde{\mathcal{E}} = {}^{\perp}(\mathcal{E}\text{-}{\rm dg}\widetilde{\mathcal{Y}})$, the hypothesis of Theorem \ref{thm:KX=KY} is satisfied. Then by Theorem \ref{thm:KX=KY} and \cite[Section 1.3]{Hov99}, the triangle-equivalence ${\bf K}(\mathcal{X}) \simeq {\bf K}(\mathcal{Y})$ is implemented by mutually inverse total derived functors $F:{\bf K}(\mathcal{X}) \to {\bf K}(\mathcal{Y})$ and $G:{\bf K}(\mathcal{Y}) \to {\bf K}(\mathcal{X})$. For more details, we refer to \cite[Chapter 7]{G}. Since $\mathcal{X}\cap\mathcal{Z} = \mathcal{P}$ and $\mathcal{Z}\cap\mathcal{Y} = \mathcal{I}$ by \cite[Proposition 3.2]{EPZ20}, it remains to verify that
$F({\bf K}(\mathcal{P}))\subseteq{{\bf K}(\mathcal{I})}$ and $G({\bf K}(\mathcal{I}))\subseteq{{\bf K}(\mathcal{P})}$.

Specifically, the total derived functor $F:{\bf K}(\mathcal{X})\rightarrow{\bf K}(\mathcal{Y})$ is defined by taking a fibrant replacement in the sense of \cite[p.5]{Hov99}, i.e., a special left $\mathcal{E}\text{-}{\rm dw}\widetilde{\mathcal{Y}}$-approximation by using the completeness of cotorsion pair $({}^{\perp}(\mathcal{E}\text{-}{\rm dw}\widetilde{\mathcal{Y}}),\mathcal{E}\text{-}{\rm dw}\widetilde{\mathcal{Y}})$ in ${\rm Ch}(\mathcal{A}, \mathcal{E})$. For any complex $M\in {\bf K}(\mathcal{P})$, by \cite[Theorem 4.22]{SS11} we have a short exact sequence $$0\rightarrow M\stackrel{f}\rightarrow Y\rightarrow L\rightarrow 0$$ with $Y\in {\rm dg}\widetilde{\mathcal{Y}}\subseteq \mathcal{E}\text{-}{\rm dw}\widetilde{\mathcal{Y}}$ and $L\in \widetilde{\mathcal{Z}}\subseteq \widetilde{\mathcal{E}}$ in ${\rm Ch}(\mathcal{A})$. Since each $M_i\in \mathcal{P} \subseteq \mathcal{Z}$ and $(\mathcal{X}, \mathcal{Z})$ is a cotorsion pair in $\mathcal{A}$, $0\rightarrow M\stackrel{f}\rightarrow Y\rightarrow L\rightarrow 0$ is degree-wise right $\mathcal{X}$-acyclic. Observe that $\mathcal{E}\text{-}{\rm dg}\widetilde{\mathcal{Y}} = \mathcal{E}\text{-}{\rm dw}\widetilde{\mathcal{Y}}$. It follows that
$f:M\rightarrow Y$ is a special left $\mathcal{E}\text{-}{\rm dw}\widetilde{\mathcal{Y}}$-approximation of $M$ in ${\rm Ch}(\mathcal{A}, \mathcal{E})$, which implies $F(M)=Y$. Since $L$ is acyclic and its cycles lie in $\mathcal{Z}$, each $L_i$ belongs to $\mathcal{Z}$. Consequently, each $Y_i$ lies in $\mathcal{Z} \cap \mathcal{Y} = \mathcal{I}$, and hence $F(M) \in {\bf K}(\mathcal{I})$, as desired. Dually, consider the total derived functor $G:{\bf K}(\mathcal{Y})\rightarrow{\bf K}(\mathcal{X})$, which takes a cofibrant replacement, i.e., a special right $\mathcal{E}\text{-}{\rm dw}\widetilde{\mathcal{X}}$-approximation by using the completeness of cotorsion pair $(\mathcal{E}\text{-}{\rm dw}\widetilde{\mathcal{X}}, (\mathcal{E}\text{-}{\rm dw}\widetilde{\mathcal{X}})^{\perp})$ in ${\rm Ch}(\mathcal{A}, \mathcal{E})$. The argument that $G$ maps complexes in ${\bf K}(\mathcal{I})$ into ${\bf K}(\mathcal{P})$ is analogous to the one above. This completes the proof.
\end{proof}
%

\subsection{Gorenstein projective and Gorenstein injective modules}
From now on, throughout this subsection, let $R$ be an associative ring with identity. We use ${\rm Mod}(R)$ and ${\rm Ch}(R)$ to denote the category of left $R$-modules and the category of $R$-complexes, respectively. We also denote by $\mathcal{P}$ and $\mathcal{I}$ the subcategories of projective and injective objects in ${\rm Mod}(R)$, respectively.

Recall that $M \in {\rm Mod}(R)$ is {\em Gorenstein projective} if $M \cong {\rm Z}_0 C$ for some complex $C$ of projective modules which remains acyclic after applying ${\rm Hom}_{\mathcal{A}}(-, P)$ for any $P\in \mathcal{P}$. Similarly, {\em Gorenstein injective} modules are defined, see \cite[Definition 4.1]{SWSW08}. We denote by $\mathcal{GP}$ the full subcategory of ${\rm Mod}(R)$ which is formed by all Gorenstein projective modules, and $\mathcal{GI}$ denotes the full subcategory of all Gorenstein injective modules.

Following \cite{DLW23,WE24}, we will call $R$ a \emph{left virtually Gorenstein} ring if $\mathcal{GP}^{\perp} ={^{\perp}\mathcal{GI}}$ in ${\rm Mod}(R)$. It follows from \cite[Proposition 3.16]{WE24} that $R$ is left virtually Gorenstein if and only if $(\mathcal{GP},
{{\mathcal{GP}}^{\perp}={}^{\perp}\mathcal{GI}}, \mathcal{GI})$ is a complete and hereditary cotorsion triple in ${\rm Mod}(R)$.  In this case, $(\mathcal{GP}, \mathcal{GI})$ forms an admissible balanced pair in ${\rm Mod}(R)$.

\begin{corollary}\label{cor:Rmodcase}
If $R$ is a left virtually Gorenstein ring such that $\mathcal{E}\text{-}{\rm dg}\widetilde{\mathcal{GP}} = \mathcal{E}\text{-}{\rm dw}\widetilde{\mathcal{GP}}$ and $\mathcal{E}\text{-}{\rm dg}\widetilde{\mathcal{GI}} = \mathcal{E}\text{-}{\rm dw}\widetilde{\mathcal{GI}}$, then we have a triangle-equivalence ${\bf K}(\mathcal{GP}) \simeq {\bf K}(\mathcal{GI})$, which restricts to a triangle-equivalence ${\bf K}(\mathcal{P}) \simeq {\bf K}(\mathcal{I})$.
\end{corollary}

Recall that $R$ is called \emph{left Gorenstein} \cite{Bel00} provided that $R$ is a ring of finite global Gorenstein dimension. A consequence of Corollary \ref{cor:Rmodcase} yields the following corollary, which has been obtained by Chen in \cite[Theorem B]{Chen10} via a quite different proof.



\begin{corollary}\label{cor:KGP=KGI'}
Let $R$ be a left Gorenstein ring. Then we have a triangle-equivalence ${\bf K}(\mathcal{GP}) \simeq {\bf K}(\mathcal{GI}),$ which restricts to a triangle-equivalence ${\bf K}(\mathcal{P}) \simeq {\bf K}(\mathcal{I})$.
\end{corollary}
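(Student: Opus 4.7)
The plan is to reduce everything to the machinery already assembled in Corollary~\ref{cor:Rmodcase}. Recall that the corollary requires three inputs for the ambient ring: the existence of a complete and hereditary cotorsion triple $(\mathcal{GP},{}\mathcal{GP}^{\perp}={}^{\perp}\mathcal{GI},\mathcal{GI})$ in ${\rm Mod}(R)$, and the two identifications $\mathcal{E}\text{-}{\rm dg}\widetilde{\mathcal{GP}}=\mathcal{E}\text{-}{\rm dw}\widetilde{\mathcal{GP}}$ and $\mathcal{E}\text{-}{\rm dg}\widetilde{\mathcal{GI}}=\mathcal{E}\text{-}{\rm dw}\widetilde{\mathcal{GI}}$. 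So the proof is essentially a hypothesis-check.

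First, I would observe that a left Gorenstein ring, by definition, has finite Gorenstein global dimension, and this bounds the Gorenstein weak global dimension from above (the Gorenstein flat dimension of any module is at most its Gorenstein projective dimension). Hence $R$ has finite Gorenstein weak global dimension, which is exactly the hypothesis needed to invoke \cite[Theorem A]{DLW23}: that result supplies the complete and hereditary cotorsion triple $(\mathcal{GP},{}\mathcal{GP}^{\perp}={}^{\perp}\mathcal{GI},\mathcal{GI})$ in ${\rm Mod}(R)$, taking care of the first input.

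Second, with the cotorsion triple in place, I would appeal to Lemma~\ref{lem:G-Tri2} (the lemma that the introduction flags as ensuring the $\mathcal{E}\text{-dg}$/$\mathcal{E}\text{-dw}$ collapse under finite Gorenstein weak global dimension). By that lemma, both $\mathcal{E}\text{-}{\rm dg}\widetilde{\mathcal{GP}}=\mathcal{E}\text{-}{\rm dw}\widetilde{\mathcal{GP}}$ and $\mathcal{E}\text{-}{\rm dg}\widetilde{\mathcal{GI}}=\mathcal{E}\text{-}{\rm dw}\widetilde{\mathcal{GI}}$ hold on ${\rm Ch}({\rm Mod}(R),\mathcal{E})$. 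Alternatively, one can verify this directly by using Lemma~\ref{dg=dwthenedg=edw}: for $X\in\mathcal{E}\text{-}{\rm dw}\widetilde{\mathcal{GP}}$ the exact class $\widetilde{\mathcal{Z}}$ associated to $\mathcal{W}=\mathcal{GP}^{\perp}$ coincides with the right-$\mathcal{GP}$-acyclic class after bounding dimensions, so ${\rm dg}\widetilde{\mathcal{GP}}=\mathcal{E}\text{-}{\rm dw}\widetilde{\mathcal{GP}}$, and the lemma translates this into $\mathcal{E}\text{-}{\rm dg}\widetilde{\mathcal{GP}}=\mathcal{E}\text{-}{\rm dw}\widetilde{\mathcal{GP}}$; dually for $\mathcal{GI}$.

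Finally, with all three hypotheses verified, Corollary~\ref{cor:Rmodcase} delivers the triangle-equivalence ${\bf K}(\mathcal{GP})\simeq{\bf K}(\mathcal{GI})$ together with the restriction to ${\bf K}(\mathcal{P})\simeq{\bf K}(\mathcal{I})$. The proof is therefore essentially an assembly argument, and the only genuinely non-trivial input is citing \cite[Theorem A]{DLW23} and Lemma~\ref{lem:G-Tri2}; the main obstacle (already handled upstream in the paper) is the $\mathcal{E}\text{-dg}=\mathcal{E}\text{-dw}$ collapse, since the cotorsion-triple existence is classical for Gorenstein rings.
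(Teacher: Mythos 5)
Your proof is correct, and at the top level it does the same thing the paper does: check the three hypotheses of Corollary~\ref{cor:Rmodcase} and then quote it. The difference is in how you obtain the identifications $\mathcal{E}\text{-}{\rm dg}\widetilde{\mathcal{GP}} = \mathcal{E}\text{-}{\rm dw}\widetilde{\mathcal{GP}}$ and $\mathcal{E}\text{-}{\rm dg}\widetilde{\mathcal{GI}} = \mathcal{E}\text{-}{\rm dw}\widetilde{\mathcal{GI}}$. The paper's proof asserts these directly from the left Gorenstein hypothesis (in effect using that every module has uniformly bounded Gorenstein projective dimension, so the balanced pair $(\mathcal{GP},\mathcal{GI})$ has finite dimension and Remark~\ref{rem:1} applies), whereas you first pass to ${\rm G.wdim}\,R<\infty$ and then invoke Lemma~\ref{lem:G-Tri1} for the cotorsion triple and Lemma~\ref{lem:G-Tri2} for the dg/dw collapse — i.e.\ you derive the statement as a special case of the more general Corollary~\ref{cor:KGP=KGI}. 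That route is perfectly legitimate and has the virtue of explicitly verifying the cotorsion-triple hypothesis, which the paper's one-line proof leaves to the surrounding discussion; its cost is that the reduction ``finite Gorenstein global dimension $\Rightarrow$ finite Gorenstein weak global dimension'' rests on the inequality ${\rm Gfd}(M)\le{\rm Gpd}(M)$, i.e.\ on the fact that Gorenstein projective modules are Gorenstein flat, which is not formal over an arbitrary ring and should be cited (it follows from \cite{SS20}; over Iwanaga--Gorenstein rings it is classical). Your sketched ``alternative'' verification via Lemma~\ref{dg=dwthenedg=edw} is vaguer — the claim that $\widetilde{\mathcal{Z}}$ coincides with the right $\mathcal{GP}$-acyclic class ``after bounding dimensions'' would need the same finite-dimension argument spelled out — but since it is offered only as a backup, the main line of your argument stands.
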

\begin{proof} Since $R$ is a left Gorenstein ring, it follows that $R$ is a left virtually Gorenstein ring such that $\mathcal{E}\text{-}{\rm dg}\widetilde{\mathcal{GP}} = \mathcal{E}\text{-}{\rm dw}\widetilde{\mathcal{GP}}$ and $\mathcal{E}\text{-}{\rm dg}\widetilde{\mathcal{GI}} = \mathcal{E}\text{-}{\rm dw}\widetilde{\mathcal{GI}}$. So the result holds by Corollary \ref{cor:Rmodcase}.
\end{proof}

Let $\mathbf{F} = \cdots\rightarrow F_{1}\rightarrow F_{0}\rightarrow F_{-1}\rightarrow\cdots$ be an acyclic complex of left $R$-modules which remains acyclic after applying $I\otimes_{R}-$ for any injective right $R$-module $I$. Then $M \cong \mathrm{Ker}(F_0\rightarrow F_{-1})$ is called a {\em Gorenstein flat module} provided that each $F_i$ is a flat left $R$-module. Recall that the {\em Gorenstein weak dimension} of $R$, denoted by ${\rm G.wdim}R$, is defined as the supremum of Gorenstein flat dimension of all left $R$-modules.

The next interesting observation follows from \cite[Theorem 4.2]{WE24}, which provides another sufficient condition on the base ring $R$ such that $(\mathcal{GP}, \mathcal{GI})$ is a balanced pair. We refer to \cite{DLW23} for a more detailed discussion on this matter.

\begin{lemma}\label{lem:G-Tri1}
Let $R$ be a ring. Then ${\rm G.wdim}R < \infty$ if and only if $(\mathcal{GP}, \mathcal{V}, \mathcal{GI})$ is a complete hereditary cotorsion triple, where $\mathcal{V}$ is the class of modules with finite flat dimension.
\end{lemma}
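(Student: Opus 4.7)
My plan is to treat the lemma as a two-direction equivalence, with each half reducing essentially to results already established in the Gorenstein homological algebra literature.

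For the direction $(\Rightarrow)$, assume $\mathrm{G.wdim}\,R<\infty$. I would directly invoke \cite[Theorem 4.2]{WE24}, which, under exactly this finiteness hypothesis, supplies complete cotorsion pairs $(\mathcal{GP},\mathcal{V})$ and $(\mathcal{V},\mathcal{GI})$ in $\mathrm{Mod}(R)$, where $\mathcal{V}$ is the class of modules of finite flat dimension. Heredity of both pairs is standard: $\mathcal{GP}$ is closed under kernels of epimorphisms between Gorenstein projectives and $\mathcal{GI}$ is closed under cokernels of monomorphisms between Gorenstein injectives (as follows from the definition via totally acyclic complexes), while $\mathcal{V}$ is thick in $\mathrm{Mod}(R)$. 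Gluing the two pairs along their common middle class $\mathcal{V}$ then produces the complete hereditary cotorsion triple $(\mathcal{GP},\mathcal{V},\mathcal{GI})$.

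For the direction $(\Leftarrow)$, assume $(\mathcal{GP},\mathcal{V},\mathcal{GI})$ is a complete hereditary cotorsion triple. Completeness of the cotorsion pair $(\mathcal{GP},\mathcal{V})$ yields, for every $M\in\mathrm{Mod}(R)$, a short exact sequence $0\to V\to G\to M\to 0$ with $G\in\mathcal{GP}$ and $V\in\mathcal{V}$. Since every Gorenstein projective module is Gorenstein flat, and every module of finite flat dimension has finite Gorenstein flat dimension, a routine dimension-shift in the Gorenstein flat resolution of $M$ shows that $\mathrm{Gfd}_R M<\infty$. To upgrade this pointwise finiteness into a genuine global bound $\mathrm{G.wdim}\,R<\infty$, I would appeal to \cite{DLW23}, where the uniform control of flat dimensions of the objects in $\mathcal{V}$ is shown to be encoded in the existence of such a cotorsion triple.

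I expect the main obstacle to be the passage from pointwise finiteness of $\mathrm{Gfd}_R M$ to a uniform global bound in the reverse direction: producing the short exact sequence $0\to V\to G\to M\to 0$ and reading off finite Gorenstein flat dimension for $M$ is elementary, but extracting a single bound independent of $M$ requires the finer cardinal/deconstructibility analysis of \cite{WE24,DLW23}. My plan is to treat those results as a black box rather than reproduce the combinatorial content here.
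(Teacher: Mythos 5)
The paper offers no independent proof of this lemma: it is stated as an observation that follows directly from \cite[Theorem 4.2]{WE24} (with \cite{DLW23} cited for further discussion), and that theorem already contains the full biconditional. Your two-direction scaffolding is sound and your explicit acknowledgement that the passage from pointwise finiteness of $\mathrm{Gfd}_R M$ to a uniform global bound must be taken from the cited sources is exactly right, but since both halves ultimately reduce to the same citation, your argument is essentially the same as the paper's.
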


Let $\mathcal{E}$ be the class of short exact sequences of complexes which remain exact after applying ${\rm Hom}_R(\mathcal{GP},-)$. Denote by ${\rm Ch}(R, \mathcal{E})$ the exact category of $R$-complexes with respect to the short exact sequences which are in $\mathcal{E}$ in each degree. The next observation is from \cite[Lemmas 4.5 and 4.6]{WE24}, together with Lemma \ref{dg=dwthenedg=edw} and its duality:

\begin{lemma}\label{lem:G-Tri2}
Let $R$ be a ring with ${\rm G.wdim}R < \infty$. Then $\mathcal{E}\text{-}{\rm dg}\widetilde{\mathcal{GP}} = \mathcal{E}\text{-}{\rm dw}\widetilde{\mathcal{GP}}$ and $\mathcal{E}\text{-}{\rm dg}\widetilde{\mathcal{GI}} = \mathcal{E}\text{-}{\rm dw}\widetilde{\mathcal{GI}}$.
\end{lemma}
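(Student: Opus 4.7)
The plan is to reduce each of the two $\mathcal{E}$-decorated identities to the corresponding statement without the ``$\mathcal{E}$-'' prefix (that is, for the Gillespie-style classes ${\rm dg}\widetilde{\mathcal{GP}}$ and ${\rm dg}\widetilde{\mathcal{GI}}$ living in the full chain category ${\rm Ch}(R)$) via Lemma \ref{dg=dwthenedg=edw} and its dual, and then to quote the work of Wang--Estrada for those classical identities.

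First I would use the hypothesis ${\rm G.wdim}\,R < \infty$ together with Lemma \ref{lem:G-Tri1} to produce the complete and hereditary cotorsion triple $(\mathcal{GP}, \mathcal{V}, \mathcal{GI})$, where $\mathcal{V}$ denotes the modules of finite flat dimension. This places us precisely in the framework of Section~3: $(\mathcal{GP}, \mathcal{GI})$ is an admissible balanced pair, and the class $\mathcal{E}$ of short exact sequences that are ${\rm Hom}_R(\mathcal{GP}, -)$-acyclic is well-defined on ${\rm Ch}(R, \mathcal{E})$. In this situation, the hypotheses of \cite[Lemmas 4.5 and 4.6]{WE24} are satisfied, and those lemmas assert the equalities ${\rm dg}\widetilde{\mathcal{GP}} = \mathcal{E}\text{-}{\rm dw}\widetilde{\mathcal{GP}}$ and (dually) ${\rm dg}\widetilde{\mathcal{GI}} = \mathcal{E}\text{-}{\rm dw}\widetilde{\mathcal{GI}}$.

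Next, I would feed the first of these equalities into Lemma \ref{dg=dwthenedg=edw}, specialized to $\mathcal{X} = \mathcal{GP}$: the lemma says that ${\rm dg}\widetilde{\mathcal{X}} = \mathcal{E}\text{-}{\rm dw}\widetilde{\mathcal{X}}$ if and only if $\mathcal{E}\text{-}{\rm dg}\widetilde{\mathcal{X}} = \mathcal{E}\text{-}{\rm dw}\widetilde{\mathcal{X}}$. Thus the classical equality obtained from \cite[Lemma 4.5]{WE24} immediately upgrades to $\mathcal{E}\text{-}{\rm dg}\widetilde{\mathcal{GP}} = \mathcal{E}\text{-}{\rm dw}\widetilde{\mathcal{GP}}$. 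Symmetrically, the dual version of Lemma \ref{dg=dwthenedg=edw} noted immediately after its proof (with ${\rm dg}\widetilde{\mathcal{Y}}$ and the cotorsion pair $(\widetilde{\mathcal{Z}}, {\rm dg}\widetilde{\mathcal{Y}})$ in place of ${\rm dg}\widetilde{\mathcal{X}}$ and $({\rm dg}\widetilde{\mathcal{X}}, \widetilde{\mathcal{Z}})$) combines with \cite[Lemma 4.6]{WE24} to give $\mathcal{E}\text{-}{\rm dg}\widetilde{\mathcal{GI}} = \mathcal{E}\text{-}{\rm dw}\widetilde{\mathcal{GI}}$, completing the proof.

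The only real obstacle I anticipate is a purely bookkeeping one: making sure that the classes of complexes singled out in \cite{WE24} under names phrased in the abelian category ${\rm Ch}(R)$ are literally the same, as full subcategories, as the $\mathcal{E}$-decorated classes $\mathcal{E}\text{-}{\rm dw}\widetilde{\mathcal{GP}}$, $\mathcal{E}\text{-}{\rm dg}\widetilde{\mathcal{GP}}$ (and their injective duals) used here in the exact category ${\rm Ch}(R, \mathcal{E})$. Once one observes that the balanced-pair structure guarantees that $\mathcal{E}$-acyclicity and ordinary ${\rm Hom}$-acyclicity against the relevant test objects coincide, no further computation is needed and the two equalities follow at once.
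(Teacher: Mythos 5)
Your proposal is correct and follows exactly the route the paper itself takes: the paper derives this lemma by citing \cite[Lemmas 4.5 and 4.6]{WE24} for the equalities ${\rm dg}\widetilde{\mathcal{GP}} = \mathcal{E}\text{-}{\rm dw}\widetilde{\mathcal{GP}}$ and ${\rm dg}\widetilde{\mathcal{GI}} = \mathcal{E}\text{-}{\rm dw}\widetilde{\mathcal{GI}}$, and then upgrading them via Lemma \ref{dg=dwthenedg=edw} and its dual, with Lemma \ref{lem:G-Tri1} supplying the requisite cotorsion-triple setting. Your extra care about identifying the classes of \cite{WE24} with the $\mathcal{E}$-decorated classes used here is a reasonable bookkeeping point, but no new argument is needed beyond what the paper records.
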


Then by Corollary \ref{cor:Rmodcase}, together with Lemmas \ref{lem:G-Tri1} and \ref{lem:G-Tri2}, we get the result stated in \cite[Theorem 1.2]{WE24} via a quite different method.

\begin{corollary}\label{cor:KGP=KGI} {\rm(\cite[Theorem 1.2]{WE24})}
Let $R$ be a ring with ${\rm G.wdim}R < \infty$. Then we have a triangle-equivalence ${\bf K}(\mathcal{GP}) \simeq {\bf K}(\mathcal{GI}),$ which restricts to a triangle-equivalence ${\bf K}(\mathcal{P}) \simeq {\bf K}(\mathcal{I})$.
\end{corollary}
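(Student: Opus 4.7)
The plan is to deduce this immediately from Corollary \ref{cor:Rmodcase}, by verifying its two hypotheses in the present setting.

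First, I would invoke Lemma \ref{lem:G-Tri1}: since ${\rm G.wdim}R < \infty$, the triple $(\mathcal{GP}, \mathcal{V}, \mathcal{GI})$, with $\mathcal{V}$ the class of left $R$-modules of finite flat dimension, is a complete and hereditary cotorsion triple in ${\rm Mod}(R)$. In particular we have $\mathcal{V} = \mathcal{GP}^{\perp} = {}^{\perp}\mathcal{GI}$, so that $(\mathcal{GP}, \mathcal{GP}^{\perp} = {}^{\perp}\mathcal{GI}, \mathcal{GI})$ is a complete hereditary cotorsion triple in ${\rm Mod}(R)$; this is the first hypothesis of Corollary \ref{cor:Rmodcase}.

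Next, I would apply Lemma \ref{lem:G-Tri2}, which asserts precisely that under ${\rm G.wdim}R < \infty$ one has the equalities $\mathcal{E}\text{-}{\rm dg}\widetilde{\mathcal{GP}} = \mathcal{E}\text{-}{\rm dw}\widetilde{\mathcal{GP}}$ and $\mathcal{E}\text{-}{\rm dg}\widetilde{\mathcal{GI}} = \mathcal{E}\text{-}{\rm dw}\widetilde{\mathcal{GI}}$. This is the second hypothesis of Corollary \ref{cor:Rmodcase}.

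With both hypotheses verified, Corollary \ref{cor:Rmodcase} directly yields a triangle-equivalence ${\bf K}(\mathcal{GP}) \simeq {\bf K}(\mathcal{GI})$ which restricts to ${\bf K}(\mathcal{P}) \simeq {\bf K}(\mathcal{I})$, completing the proof. There is no genuine obstacle here, since the work has been packaged into the preceding lemmas; Lemma \ref{lem:G-Tri1} encapsulates the cotorsion-theoretic input from \cite{WE24}, and Lemma \ref{lem:G-Tri2} (via Lemma \ref{dg=dwthenedg=edw} and its dual) reduces the dg/dw comparison to results already established.
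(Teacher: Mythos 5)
Your proof is correct and matches the paper's own argument exactly: the paper likewise deduces the corollary by combining Lemma \ref{lem:G-Tri1} (to obtain the complete hereditary cotorsion triple) and Lemma \ref{lem:G-Tri2} (to obtain the dg/dw equalities) and then applying Corollary \ref{cor:Rmodcase}. No issues.
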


\subsection{Pure projective and pure injective objects}

Throughout this subsection, let $\mathcal{A}$ be a locally finitely presented Grothendieck category with an admissible balanced pair $(\mathcal{X}, \mathcal{Y})$.

Recall that an acyclic complex in $\mathcal{A}$ is {\em pure acyclic} provided that it remains acyclic by applying $\mathrm{Hom}_{\mathcal{A}}(P,-)$ for any finitely presented object $P$. A short exact sequence
$$0\rightarrow M_1\rightarrow M_2\rightarrow M_3\rightarrow 0$$
is said to be {\em pure exact} if it is pure acyclic as a complex, and we denote by $\mathcal{E}$ the class of pure exact sequences in $\mathcal{A}$.

An object $P$ is called {\em pure projective} if $\mathrm{Hom}_{R}(P,-)$ is an exact functor on all pure exact sequences. Thus projective objects and finitely presented objects are pure projective. We use $\mathcal{PP}$ to denote the class of all pure projective objects in $\mathcal{A}$. Dually, one has the notion of pure injective objects; the class of pure injective objects in $\mathcal{A}$ is denoted by $\mathcal{PI}$. It follows that $(\mathcal{PP}, \mathcal{PI})$ is an admissible balanced pair and the proof is similar to that of \cite[Example 8.3.2]{EJ00}.

The following results give some nice characterizations of the right and left orthogonal classes of complexes of pure projective and injective objects, respectively.

\begin{proposition} {\rm (\cite[Theorem 5.4]{S14})}\label{Thm5.4ofS14}
Denote by $\mathcal{W}$ the classes of pure acyclic complexes, there exists complete and hereditary cotorsion pairs $(\mathcal{E}\text{-}{\rm dw}\widetilde{\mathcal{PP}}, \mathcal{W})$ and $(\mathcal{W}, \mathcal{E}\text{-}{\rm dw}\widetilde{\mathcal{PI}})$ in ${\rm Ch}(\mathcal{A}, \mathcal{E})$.
\end{proposition}

The following result is a direct consequence of Proposition \ref{Thm5.4ofS14}.

\begin{lemma}\label{orthogonalclassesequal}
Let $\mathcal{A}$ be a locally finitely presented Grothendieck category. Then $(\mathcal{E}\text{-}{\rm dw}\widetilde{\mathcal{PP}})^{\perp} = {}^{\perp}(\mathcal{E}\text{-}{\rm dw}\widetilde{\mathcal{PI}})$ holds on ${\rm Ch}(\mathcal{A}, \mathcal{E})$.
\end{lemma}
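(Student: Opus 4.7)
The plan is to show both sides of the claimed equality coincide with the class of pure acyclic complexes, and then invoke the two characterizations of pure acyclicity from the preceding proposition of \v{S}\v{t}ov\'{\i}\v{c}ek and Hosseini.

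First I would unpack the left-hand side. A complex $E$ lies in $(\mathcal{E}\text{-}{\rm dw}\widetilde{\mathcal{PP}})^{\perp}$ precisely when $\mathrm{Ext}^1_{{\rm Ch}(\mathcal{E})}(X,E)=0$ for every $X\in\mathcal{E}\text{-}{\rm dw}\widetilde{\mathcal{PP}}$. Using the identification $\mathrm{Ext}^1_{{\rm Ch}(\mathcal{E})}(X,-) = \mathrm{Ext}^1_{dw}(X,-)$ for such $X$ (which is the content of the observation just above the lemma: any degreewise-$\mathcal{E}$ extension with left term $X\in\mathcal{E}\text{-}{\rm dw}\widetilde{\mathcal{PP}}$ is automatically degreewise split since every $X_n\in\mathcal{PP}$ and the short exact sequences in $\mathcal{E}$ are exactly the pure exact ones), together with Lemma~\ref{lem:Gil04}, this vanishing is equivalent to the statement that every chain map $X\to \Sigma E$ is null homotopic. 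Since $\mathcal{E}\text{-}{\rm dw}\widetilde{\mathcal{PP}}$ is closed under the suspension functor, this in turn is equivalent to saying that every chain map from an arbitrary $P\in\mathcal{E}\text{-}{\rm dw}\widetilde{\mathcal{PP}}$ into $E$ is null homotopic. By condition (1) of the cited proposition, this is exactly the assertion that $E$ is pure acyclic.

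Next I would run the dual argument for the right-hand side. For $E\in{}^{\perp}(\mathcal{E}\text{-}{\rm dw}\widetilde{\mathcal{PI}})$ one has $\mathrm{Ext}^1_{{\rm Ch}(\mathcal{E})}(E,I)=0$ for all $I\in\mathcal{E}\text{-}{\rm dw}\widetilde{\mathcal{PI}}$. The same comparison $\mathrm{Ext}^1_{{\rm Ch}(\mathcal{E})}(-,I)=\mathrm{Ext}^1_{dw}(-,I)$ applies for such $I$ (any short exact sequence in $\mathcal{E}$ ending in $I$ splits degreewise by pure injectivity), so by Lemma~\ref{lem:Gil04} and the closure of $\mathcal{E}\text{-}{\rm dw}\widetilde{\mathcal{PI}}$ under shifts, this amounts to every chain map $E\to I$ being null homotopic for every $I\in\mathcal{E}\text{-}{\rm dw}\widetilde{\mathcal{PI}}$. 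By condition (2) of the cited proposition, this is again equivalent to $E$ being pure acyclic.

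Combining the two characterizations gives $(\mathcal{E}\text{-}{\rm dw}\widetilde{\mathcal{PP}})^{\perp} = \{\text{pure acyclic complexes}\} = {}^{\perp}(\mathcal{E}\text{-}{\rm dw}\widetilde{\mathcal{PI}})$, which is the required equality. The only real subtlety in executing this plan is to justify carefully the two identifications of $\mathrm{Ext}^1_{{\rm Ch}(\mathcal{E})}$ with $\mathrm{Ext}^1_{dw}$ when one side of the Ext is either in $\mathcal{E}\text{-}{\rm dw}\widetilde{\mathcal{PP}}$ or in $\mathcal{E}\text{-}{\rm dw}\widetilde{\mathcal{PI}}$; this rests on the fact that every admissible short exact sequence of complexes in $\mathrm{Ch}(\mathcal{A},\mathcal{E})$ is pure exact in each degree, hence splits degreewise as soon as one term is pure projective (resp.\ pure injective) in every degree. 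After that, everything else is bookkeeping via Lemma~\ref{lem:Gil04} and a direct application of the \v{S}\v{t}ov\'{\i}\v{c}ek--Hosseini characterizations.
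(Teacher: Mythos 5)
Your proposal is correct and follows essentially the same route as the paper: both identify each orthogonal class with the class of pure acyclic complexes via the identification $\mathrm{Ext}^1_{{\rm Ch}(\mathcal{E})}=\mathrm{Ext}^1_{dw}$ (valid since degreewise pure-exact extensions split degreewise against pure projective, resp.\ pure injective, components), Lemma~\ref{lem:Gil04}, and the two Simson--Hosseini characterizations. Your write-up merely spells out the shift bookkeeping that the paper leaves implicit.
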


Thus, we obtain the following corollary from Theorem \ref{thm:KX=KY} and Lemma \ref{orthogonalclassesequal}.

\begin{corollary}\label{cor:KPP=KPI}
Let $\mathcal{A}$ be a locally finitely presented Grothendieck category. Then there is a triangle-equivalence ${\bf K}(\mathcal{PP}) \simeq {\bf K}(\mathcal{PI})$.
\end{corollary}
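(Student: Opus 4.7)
The plan is to verify the hypotheses of Theorem \ref{thm:KX=KY} with $\mathcal{X} = \mathcal{PP}$ and $\mathcal{Y} = \mathcal{PI}$, and then simply invoke that theorem. The ambient category $\mathcal{A}$ is a Grothendieck category, so it satisfies AB5 as required throughout the paper. The pair $(\mathcal{PP}, \mathcal{PI})$ is an admissible balanced pair in $\mathcal{A}$, as stated in the paragraph preceding the corollary. In this setting the class $\mathcal{E}$ consists of the pure exact sequences of $\mathcal{A}$, and $\widetilde{\mathcal{E}}$ is the class of pure acyclic complexes.

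Next I would appeal to the lemma stated just before the corollary, which supplies the key identity $(\mathcal{E}\text{-}{\rm dw}\widetilde{\mathcal{PP}})^\perp = {^\perp}(\mathcal{E}\text{-}{\rm dw}\widetilde{\mathcal{PI}})$. This is precisely the assumption that appears in the ``in particular'' clause of Theorem \ref{thm:M5}. Note that once this equality is available, the closure conditions required by Theorem \ref{thm:M5}(1) and (2) are automatic: a right $\mathrm{Ext}^1$-orthogonal is always closed under direct products, while a left $\mathrm{Ext}^1$-orthogonal is always closed under coproducts, so the common class inherits closure under both. Hence Theorem \ref{thm:M5} produces two model structures
\[
\mathcal{M}_{dw\mathcal{PP}} = (\mathcal{E}\text{-}{\rm dw}\widetilde{\mathcal{PP}},\, (\mathcal{E}\text{-}{\rm dw}\widetilde{\mathcal{PP}})^\perp,\, {\rm Ch}(\mathcal{A})),\qquad
\mathcal{M}_{dw\mathcal{PI}} = ({\rm Ch}(\mathcal{A}),\, {^\perp}(\mathcal{E}\text{-}{\rm dw}\widetilde{\mathcal{PI}}),\, \mathcal{E}\text{-}{\rm dw}\widetilde{\mathcal{PI}})
\]
on ${\rm Ch}(\mathcal{A}, \mathcal{E})$, sharing the same class of trivial objects.

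Invoking Theorem \ref{thm:KX=KY} then yields the chain of triangle-equivalences
\[
{\bf K}(\mathcal{PP}) \simeq \mathrm{Ho}(\mathcal{M}_{dw\mathcal{PP}}) \simeq \mathrm{Ho}(\mathcal{M}_{dw\mathcal{PI}}) \simeq {\bf K}(\mathcal{PI}),
\]
which is the desired conclusion. There is essentially no obstacle at this stage: the substantive content, namely the two characterizations of pure acyclic complexes via null-homotopies against complexes with pure projective, respectively pure injective, entries, has been imported from \cite{S17, Hos19} via the preceding proposition and lemma, while the model-categorical machinery has already been assembled in the previous sections. The only thing one needs to be slightly careful about is explicitly identifying $\widetilde{\mathcal{E}}$ with both orthogonal classes (so that the hypothesis of Theorem \ref{thm:KX=KY} is literally the one provided by the lemma), after which the corollary follows as a one-line application.
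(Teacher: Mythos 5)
Your proposal is correct and follows essentially the same route as the paper: the preceding lemma supplies the equality $(\mathcal{E}\text{-}{\rm dw}\widetilde{\mathcal{PP}})^{\perp}={}^{\perp}(\mathcal{E}\text{-}{\rm dw}\widetilde{\mathcal{PI}})$, and the corollary is then a direct application of Theorem \ref{thm:KX=KY}. Your extra remark that this equality automatically forces the closure conditions of Theorem \ref{thm:M5} (the common class, being simultaneously a right and a left ${\rm Ext}^1$-orthogonal, is closed under both products and coproducts) is a point the paper leaves implicit, and it is a welcome clarification rather than a deviation.
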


\begin{remark}\label{remark:5.11}
The authors are grateful to a referee of this paper for noting that Corollary \ref{cor:KPP=KPI} has already been proved by \v{S}\v{t}ov\'{\i}\v{c}ek in \cite[Corollaries 5.7 and 5.8]{S14}. Furthermore, Corollary \ref{cor:KPP=KPI} may be viewed as a refinement of a result by Chen in \cite{Chen10}, who obtained the triangle-equivalence ${\bf K}(\mathcal{PP}) \simeq {\bf K}(\mathcal{PI})$ under the additional hypothesis that $R$ is a ring with finite pure global dimension. It is also worth noting that the proof presented here is distinct from the one given in \cite{Chen10}.
\end{remark}

\begin{remark}
It is tempting to generalize the application presented in Section~5.3 from pure projective and pure injective objects to the broader framework of $\lambda$-pure projective and $\lambda$-pure injective objects, where $\lambda$ denotes an infinite regular cardinal. However, for uncountable $\lambda$, the existence of sufficiently many $\lambda$-pure injective preenvelopes is a delicate issue and is not guaranteed in general. We are grateful to the referee for bringing this to our attention and for supplying the relevant reference; further details can be found in~\cite{CS25}.
\end{remark}

\vskip 10pt

\noindent {\bf Acknowledgements.}\quad J.S. Hu and X.Y. Yang are supported by the National Natural Science Foundation of China (Grant No. 12571035). W. Ren is supported by the Natural Science Foundation of Chongqing, China (No. CSTB2025NSCQ-GPX1014). H.Y. You is supported by Zhejiang Provincial Natural Science Foundation of China (No. LQ23A010004) and the National Natural Science Foundation of China (Grant No. 12401043). The authors are grateful to the referees for reading the paper carefully and for many suggestions on mathematics and English expressions.

\newpage

\bibliography{}

\vspace{3mm}
\noindent\textbf{Jiangsheng Hu}\\
School of Mathematics, Hangzhou Normal University, Hangzhou 311121, P. R. China.\\
Email: \textsf{hujs@hznu.edu.cn}\\[1mm]
\textbf{Wei Ren}\\
School of Mathematical Sciences, Chongqing Normal University, Chongqing 401331, P. R. China\\
Email: \textsf{wren@cqnu.edu.cn}\\[1mm]
\textbf{Xiaoyan Yang}\\
School of Science, Zhejiang University of Science and Technology, Hangzhou 310023, P. R. China.\\
Email: \textsf{yangxy@zust.edu.cn}\\[1mm]
\textbf{Hanyang You}\\
School of Mathematics, Hangzhou Normal University, Hangzhou 311121, P. R. China.\\
E-mail: \textsf{youhanyang@hznu.edu.cn}\\[1mm]

%
%
%
%

\end{document}